\DeclarePairedDelimiter{\abs}{\lvert}{\rvert}
\DeclarePairedDelimiter{\norm}{\lVert}{\rVert}
\DeclarePairedDelimiter{\set}{\{}{\}}
\DeclareMathAlphabet{\mathup}{OT1}{\familydefault}{m}{n}
\newcommand{\dx}[1]{\mathop{}\!\mathup{d} #1}
\DeclarePairedDelimiter{\prt}{(}{)}
\DeclarePairedDelimiter{\brk}{[}{]}
\newcommand{\N}{{\mathbb N}}
\newcommand{\R}{{\mathbb R}}
\newcommand{\Rd}{{\mathbb R^d}}
\newcommand{\curlyH}{\mathcal{H}}
\theoremstyle{plain}
\newtheorem{theorem}{Theorem}[section]
\newtheorem{lemma}[theorem]{Lemma}
\newtheorem{proposition}[theorem]{Proposition}
\newtheorem{corollary}[theorem]{Corollary}
\theoremstyle{remark}
\newtheorem{remark}[theorem]{\bf Remark}
\newtheorem{definition}[theorem]{\bf Definition}
\newcommand{\ie}{\emph{i.e.}}
\renewcommand{\i}{^{(i)}}
\newcommand{\epsnun}{_{\epsilon,\nu,N}}
\newcommand{\nun}{_{\nu,N}}
\newcommand{\oun}{_{0,N}}
\newcommand{\nnun}{_{\nu_N,N}}
\newcommand{\oinfty}{_{0,\infty}}
\newcommand{\sign}{\mathrm{sign}}
\newcommand{\ds}{\displaystyle}
\newcommand{\ddt}{\frac{\dx{}}{\dx t}}
\newcommand{\partialt}[1]{\frac{\partial #1}{\partial t}}
\newcommand{\fpartial}[1]{\frac{\partial}{\partial #1}}
\newcommand{\grad}{\nabla}
\renewcommand{\div}{\nabla\cdot}
\newcommand{\Lap}{\Delta}
\newcommand{\indicator}{\mathbbm{1}}
\DeclareMathOperator\supp{supp}
\newcommand{\calK}{\mathcal{K}}
\title[]{From Finite to Continuous Phenotypes in (Visco-)Elastic Tissue Growth Models}
\author{Tomasz D\k{e}biec$^{1}$}
\author{Mainak Mandal$^{2}$}
\author{Markus Schmidtchen$^{2}$}
\address{$^{1}$ Institute of Applied Mathematics and Mechanics, University of Warsaw, Banacha 2, 02-097 Warsaw, Poland (t.debiec@mimuw.edu.pl).}
\address{$^{2}$ Institute of Scientific Computing, Faculty of Mathematics, TU Dresden, Zellescher Weg 12-14, 01069 Dresden 
	(mainak.mandal@tu-dresden.de, markus.schmidtchen@tu-dresden.de).}
\begin{document}

\maketitle
\begin{abstract}
    In this study, we explore a mathematical model for tissue growth focusing on the interplay between multiple cell subpopulations with distinct phenotypic characteristics. The model addresses the dynamics of tissue growth influenced by phenotype-dependent growth rates and collective population pressure, governed by Brinkman's law. We examine two primary objectives: the joint limit where viscosity tends to zero while the number of species approaches infinity, yielding an inviscid Darcy-type model with a continuous phenotype variable, and the continuous phenotype limit where the number of species becomes infinite with a fixed viscosity, resulting in a novel viscoelastic tissue growth model. In this sense, this paper  provides a comprehensive framework that elucidates the relationships between four different modelling paradigms in tissue growth.
    \\[0.5em]
\end{abstract}{}

\vskip .4cm
\begin{flushleft}
    \noindent{\makebox[1in]\hrulefill}
\end{flushleft}
	2010 \textit{Mathematics Subject Classification.} 35K57, 47N60, 35B45, 35K55, 35K65, 35Q92; 
	\newline\textit{Keywords and phrases.} Continuous Phenotype Limit, Inviscid Limit, Brinkman-to-Darcy Limit, Phenotypic Heterogeneity, Structured Population Models, Porous Medium, Tissue Growth, Parabolic-Hyperbolic Cross-Diffusion Systems.\\[-2.em]
\begin{flushright}
    \noindent{\makebox[1in]\hrulefill}
\end{flushright}
\vskip 1.5cm

\section{Introduction}

In the study of tissue growth and multi-cell aggregates, understanding the interplay between different species or phenotypes as well as their influence on the collective behaviour is crucial. 
A key aspect of this interplay involves how various subpopulations, each with distinct characteristics, respond to and, concurrently, influence the overall population density as well as  the overall growth dynamics within a given tissue. 

Mathematical models play an important role in unraveling these complex interactions and predicting the behaviour of biological systems under various conditions. \emph{Structured models}, which account for the heterogeneity within cell populations, provide a more accurate representation of biological systems. The resulting models can capture the dynamics of different cell types, their interactions, and their responses to environmental stimuli such as species-dependent growth rates. Moreover, structured models help shape the way we design tumour therapy \cite{LCC2016, LLCEP2015, CLC2016} and are a fundamental theoretical tool in the study of the development of drug resistance \cite{PBZMMJH2013, ABFHL2019}. 

The fundamental building block in phenotype-structured models often comprises systems of Lotka-Volterra type of the form
\begin{align*}
	\partialt n(t; a) - \Delta_a n(t; a) = R(n; a),
\end{align*}
where the density of cells, $n=n(t; a)$, is assumed to be well-mixed in space (spatial homogeneity) and labelled by a continuous phenotype variable $a$ \cite{LMP2011, Per2006, Per2015}, and references therein. In this equation, the linear diffusion term models mutation and the phenotype-dependent right-hand side models selection and growth dynamics of each subpopulation. Multiple variations of this model exist including spatial resolution \cite{LMV2020, LVLC2018} in form of linear random dispersal with trait-dependent mobility (by incorporating  ``$D(a) \Delta n$'') \cite{LMP2011} or by a nonlinear response to the pressure, $p$, within the tissue (by incorporating ``$D(a) \nabla \cdot (n(x,t;a) \nabla p )$''), \cite{LMV2020, MRL2022}.\\

In recent years, there has been significant progress in comprehending fluid-based models of tissue growth with variations of the continuity equation
\begin{align}
	\label{eq:continuity-eqn}
	\partialt n(x,t) + \nabla\cdot (n(x,t) v(x,t)) = f(x,t; n),
\end{align}
at their core \cite{Per2014}, and references therein. In this context, the population density, $n=n(x,t)$, at a point $x$ and time $t$, is influenced by a velocity field and subject to certain growth dynamics. Then, tissue growth arises from the interplay of these two effects: cell division driven by growth dynamics, $f$, increases the pressure, which, subsequently, accelerates dispersal as modelled by the velocity field $v$. Predominantly, there are three rheological laws in the literature relating the velocity and the pressure, $p$ --- \emph{Darcy's law}, \emph{Brinkman's law}, and \emph{Navier-Stokes equation}. In alignment with the focus of our paper, let us briefly review the literature on elastic models (Darcy coupling) and viscoelastic models (Brinkman coupling).\\

\textbf{Darcy's Law} \\
One of the simplest relationships between velocity and pressure is encapsulated in Darcy's law, $v = - \nabla p(n)$. Under this law, Eq. \eqref{eq:continuity-eqn} transforms into a semi-linear porous medium equation \cite{Vaz2007}, which was initially proposed in the context of tumour growth in \cite{BD2009} and later studied analytically in \cite{PQV2014} for the constitutive law $p(n) = n^\gamma$ and in \cite{HV2017} for the constitutive law $p(n) = \epsilon n / (1-n)$.
Historically, two-species variants of this Darcy model were introduced in \cite{BT1983, BGHP1985, BHIM2012, BHIHMW2020}, featuring a \emph{joint population pressure} generated by the presence of all cells, regardless of their type. More recently, under less restrictive conditions on the initial data and growth dynamics, this system has been studied in \cite{CFSS2017, BPPS2019, GPS2019, PX2020, Jac2021}, with a formal derivation of an $N$-species model presented in \cite{CLM2020}. Finally, \cite{Dav2023} considered a Darcy model with a continuous phenotype variable. However, a rigorous connection between models with a finite number of phenotypes $N < \infty$ and those with a continuous trait has, to the best of our knowledge, never been shown, and this will be addressed as part of this work.\\

\textbf{Brinkman's Law} \\
When viscoelastic effects are incorporated, the velocity in Eq. \eqref{eq:continuity-eqn} is related to the pressure via Brinkman's law, i.e., $v = - \nabla W$, where
$$
	- \nu \Delta W + W = p,
$$
with $\nu >0$ being the viscosity constant. This model was proposed and studied in \cite{PV2015, KT2018}.
In \cite{DT2015}, the authors considered a three-compartment model for proliferative cells, quiescent cells, and dead cells, with Brinkman coupling. These equations are coupled with a linear diffusion equation for nutrient distribution and drug distribution, respectively. Finally, a two-species version featuring sharp interfaces between the two phenotypes was proposed and studied analytically in \cite{DS2020, DPSV2021}.\\

\textbf{Inviscid Limit} \\
Formally, it can be observed that as $\nu \to 0$, $W \to p$. In this case, we recover Darcy's law in the inviscid limit. This limit was rigorously established in \cite{DDMS2024} for the constitutive law $p(n)=n$, and using the same technique, the inviscid limit was extended to the power law case $p(n) = n^\gamma$ in \cite{ES2023}, and to a certain class of systems involving interacting species in \cite{JVZ2024}.\\

\textbf{Phenotype Limit} \\
Models featuring $N \in \mathbb{N}$ distinct phenotypes quickly become numerically intractable. Moreover, in cases such as tumours, the diversity of phenotypes is so vast that it is often more effective to represent the phenotype space as a continuum, as mentioned earlier. This approach not only reduces the complexity of the mathematical model but also maintains biological accuracy.\\

\textbf{Goal of this work} \\
As starting point of this work, we consider an $N$-species system where $n^{(i)} = n^{(i)}(x,t)$ denotes the density of the $i$th subpopulation at location $x\in \Rd$ and time $t\geq 0$, where $i\in \set{1,\ldots,N}$. Each species responds to the collective population pressure via Brinkman's law, and their growth dynamics are governed by phenotype-dependent growth rates:
\begin{align*}
	\partialt {n^{(i)}} = \nabla \cdot (n^{(i)} \nabla W) + n^{(i)} G^{(i)}\prt*{(n^{(j)})_{j=1}^N},
\end{align*}
coupled through Brinkman's law
$$
	-\nu \Delta W + W = \frac1N\sum_{j=1}^N n^{(j)},
$$
and equipped with nonnegative initial data $n^{(i)}(x,0) = n^{(i),\text{in}}(x)$, for $i=1, \ldots, N$.\\

In this paper, we study this stratified tissue growth model, focusing on the behaviour of multiple species, $(n^{(i)})_{i=1}^N$, under different scaling limits. Specifically, the primary objectives of this paper are twofold:\\
\begin{wrapfigure}{r}{0.38\textwidth}
	\centering
	\vspace{1.\baselineskip}
	\includegraphics[width=0.4\textwidth]{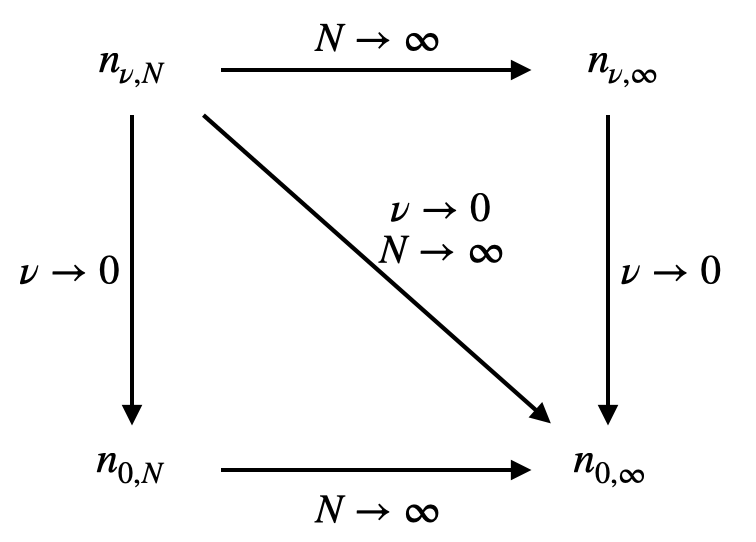}
	\vspace{-1.2\baselineskip}
\end{wrapfigure} 
\begin{enumerate}
	\item \textbf{Joint Limit:} We study the transition from a nonlocal, viscoelastic to an inviscid, local response in the tissue ($\nu \to 0$) while, simultaneously, let $N\to \infty$, to obtain an inviscid Darcy-type model with continuous phenotype variable.\\
	\item \textbf{Continuous Phenotype Limit:} Keeping the viscosity coefficient $\nu>0$ fixed, we let the number of distinct species tend to infinity. This limit provides a novel viscoelastic tissue growth model with continuous phenotype variable.\\
\end{enumerate}
The paper provides several novel regularity results which are stable under these different limits. Specifically, we establish a continuous-phenotype entropy inequality and we introduce the notion of weak solutions and derive entropy inequalities that play a crucial role in ensuring the stability and convergence of the solutions.

The rest of this paper is organised as follows. In Section \ref{sec:setting-main-result} we introduce the precise assumptions on the growth rates and the initial data. We introduce the notion of solutions, and present the main theorems. Subsequently, in Section \ref{sec:weak-sol-and-entropy}, we establish the existence of weak solutions to the $N$-species viscoelastic model and we prove several uniform estimates culminating in an entropy dissipation inequality which is a centrepiece in the joint limit. Section \ref{sec:JointLimit} is dedicated to the joint limit $\nu \to 0$ and $N\to \infty$. In Section \ref{sec:continuous-phenotype} we establish the continuous-phenotype limit, $N\to \infty$, while keeping $\nu > 0$ fixed and when $\nu =0$. We conclude in Section \ref{sec:conclusion} with some remarks and future avenues.

\section{Notation, definitions, and main results}
\label{sec:setting-main-result}
This section establishes the notation used throughout this work. We introduce the main assumptions regarding the initial data and growth rates of each species. Additionally, we define our concept of weak solutions for the respective systems. Finally, we present the main results of this paper.

\subsection{Notation}
Throughout, let $N\in \N$, with $N\geq 2$ denote the total number of distinct species. \\

\textbf{Growth rates.} We begin by addressing the assumptions on the growth rates. In the continuous-structure limit, we expect a growth rate function parameterised by the phenotype variable. Therefore, we consider 
\begin{align*}
	G: \R \times [0,1] &\to \R,\\
	(n, a) &\mapsto G(n; a),
\end{align*} 
and we define
\begin{equation*}
	G_N\i(n) \coloneqq G\prt{n; iN^{-1}},
\end{equation*}
for $i=1, \ldots, N$ and $n \geq 0$, with $C^1\cap L^\infty$-continuation on the negative half line. We make the following assumptions on $G$\\
\begin{itemize}
	\setlength\itemsep{1.em}
	\item[({\crtcrossreflabel{G1}[hyp:G1]})] regularity: $G \in C^{1} (\R \times [0,1])$,
	\item[({\crtcrossreflabel{G2}[hyp:G2]})] monotonicity: $ \max_{a \in [0,1]} \partial_{n} G(\cdot\,; a) \leq - \alpha <0 $ for some
	      $ \alpha > 0 $,
	\item[({\crtcrossreflabel{G3}[hyp:G3]})] homeostatic pressure: $\forall a \in [0,1] $ there is $ n^{\star}(a) >0 $ such that $ G(n^{\star}(a); a) =0$ and $\bar{n}:=\sup_{a \in [0,1]} n^{\star}(a) < \infty $.\\[1em]
\end{itemize}

\textbf{Initial data.} We define the initial data in a similar fashion. Let 
\begin{align*}
	n^{\mathrm{in}}: \Rd \times [0,1] &\to [0,\infty),\\
	(x, a) &\mapsto n^{\mathrm{in}}(x;a),
\end{align*}
be a Carath\'eodory function such that for all $a\in[0,1]$
\begin{equation}
	\label{eq:DataLpBound}
	n^{\mathrm{in}}(\cdot\,; a) \in L^{1}\cap L^{\infty}(\Rd),
\end{equation}
as well as
\begin{equation*}
	n^{\mathrm{in}}(x; a) \leq \bar{n},
\end{equation*}
for all $a\in[0,1]$ and almost every $x \in \Rd$.
We moreover assume that 
\begin{equation*}
    \sup_{a\in[0,1]}\int_{\Rd} n^{\mathrm{in}}(x; a)|x|^2 \dx x < \infty.
\end{equation*}

Then, we set
\begin{equation*}
	n^{(i),\mathrm{in}}_{N}(x) \coloneqq n^{\mathrm{in}}\left(x; iN^{-1} \right),
\end{equation*}
almost everywhere on $\Rd$.\\

\textbf{Starting point and further notation.} Having introduced all the necessary notation, we are now ready to present the starting point of our endeavour --  an $N$-species system, where $n_{\nu,N}^{(i)}=n_{\nu,N}^{(i)}(x,t)$ denotes the number density of the $i$th subpopulation at location $x$ and time $t$, for $i=1, \ldots, N$. Each species responds to the collective population pressure via Brinkman's law to avoid overcrowding, and the growth dynamics are governed by the phenotype-dependent growth rates introduced earlier. Altogether, the system's dynamics are expressed as follows:
{\begin{align}
	\label{eq:Brinkman_i}
	\left\{
	\begin{array}{rll}
		\ds \partialt{n\i\nun} - \div\prt*{n\i\nun \grad W\nun} \!\!\! & = \ds n\i\nun G\i_{N}(\underline{n}\nun ), & \\[0.5em]
		n\i\nun(x,0)                                                   & = n^{(i),\mathrm{in}}_{N}(x),              &                                  \\[0.5em]
		\ds -\nu \Lap W\nun +W\nun \!\!\!                              & = \ds  \underline{n}\nun,                  &
	\end{array}
	\right.
\end{align}
where the \emph{(rescaled) total population density} is defined as
\begin{equation*}
	\underline{n}\nun(x,t) = \frac{1}{N}\sum_{i=1}^{N} n\nun\i(x,t).
\end{equation*}
Let us introduce the \emph{interpolated density} as
\begin{equation} \label{def:interpotation}
	\begin{aligned}
		n\nun(x,t; a) \coloneqq  \sum_{i=1}^{N} n\i\nun(x,t) \indicator_{\left(\frac{i-1}{N},\frac{i}{N}\right]}(a),
	\end{aligned}
\end{equation}
along with the interpolated initial data
\begin{equation*}
	\begin{aligned}
		n_N^{\mathrm{in}}(x; a) \coloneqq  \sum_{i=1}^{N} n^{(i),\mathrm{in}}_N(x) \indicator_{\left(\frac{i-1}{N},\frac{i}{N}\right]}(a).
	\end{aligned}
\end{equation*}
Similarly, we introduce the interpolated growth rate:
\begin{equation*}
	\begin{aligned}
		G_{N}(n;a) \coloneqq \sum_{i=1}^{N} G_{N}\i(n)\indicator_{\left(\frac{i-1}{N},\frac{i}{N}\right]}(a).
	\end{aligned}
\end{equation*}
The advantage of introducing the interpolated quantities is that they naturally embed into function spaces continuous in space, time, and phenotype that play a crucial role in the continuous-phenotype limit. Indeed, using the interpolated quantities, System \eqref{eq:Brinkman_i} can be expressed as
{\begin{align}
	\label{eq:Brinkman_a}
	\left\{
	\begin{array}{rll}
		\ds \partialt{n\nun} - \div\prt*{n\nun \grad W\nun} \!\!\! & = \ds n\nun G_{N}(\underline{n}\nun; a), &  \\[0.5em]
		n\nun(x,0;a)                                               & = n^{\mathrm{in}}_{N}(x;a),               &                             \\[0.5em]
		\ds -\nu \Lap W\nun +W\nun \!\!\!                          & = \ds  \underline{n}\nun,                 &
	\end{array}
	\right.
\end{align}
for all $a \in [0,1]$. Upon observing that
\begin{equation*}
	\begin{aligned}
		\underline{n}\nun = \frac{1}{N}\sum_{i=1}^{N} n\i\nun = \sum_{i=1}^{N} n\nun\i\int_{i/N}^{i+1/N} 1 \dx{a} = \int_{0}^{1} n\nun(x,t;a)\dx{a},
	\end{aligned}
\end{equation*}
we find that the rescaled total population density satisfies
{\begin{align}
	\label{eq:Brinkman_average}
	\left\{
	\begin{array}{rll}
		\ds \partialt{\underline{n}\nun} - \div\prt*{\underline{n}\nun \grad W\nun} \!\!\! & = \ds \int_{0}^{1}  n\nun G_{N}(\underline{n}\nun;a)\dx{a}, & \\[0.5em]
		\underline{n}\i\nun(x,0)                                                           & = \underline{n}_N^{\mathrm{in}}(x),                           & \\[0.5em]
		\ds -\nu \Lap W\nun +W\nun \!\!\!                                                  & = \ds  \underline{n}\nun,                                     &
	\end{array}
	\right.
\end{align}
with
\begin{equation*}
    \underline{n}^{\mathrm{in}}_N := \int_0^1 n_N^{\mathrm{in}} \dx a.
\end{equation*}

\textbf{Limiting systems.} Based on System~\eqref{eq:Brinkman_a} we are interested in two limits --- the continuous phenotype limit ($N\to \infty$) and the inviscid limit ($\nu \to 0$). Regarding the first, letting $N\to \infty$, and assuming all limit objects exists, we formally obtain a phenotypically stratified viscoelastic tissue growth model of the form
{\begin{align}
	\label{eq:Brinkman_Phenotype}
	\left\{
	\begin{array}{rll}
		\ds \partialt{n_{\nu,\infty}} - \div\prt*{n_{\nu,\infty} \grad W_{\nu,\infty}} \!\!\! & = \ds n_{\nu,\infty} G(\underline{n}_{\nu,\infty};a),   \\[0.5em]
		n_{\nu,\infty}(x,0;a)                                                                 & = n^{\mathrm{in}}(x;a),                                 & \\[0.5em]
		\ds -\nu \Lap W_{\nu,\infty} +W_{\nu,\infty} \!\!\!                                   & = \ds  \underline{n}_{\nu,\infty},                      &
	\end{array}
	\right.
\end{align}
where
\begin{equation*}
	\underline{n}_{\nu,\infty} := \int_0^1 n_{\nu,\infty} \dx a.
\end{equation*}
Conversely, letting $\nu \to 0$, we obtain the $N$-species inviscid system, similarly to~\cite{DDMS2024},
\begin{align}
	\label{eq:Darcy_a}
	\left\{
	\begin{array}{rll}
		\ds \partialt{n_{0,N}} - \div\prt*{n_{0,N} \grad \underline{n}_{0,N}} \!\!\! & = \ds n_{0,N} G_{N}(n_{0,N};a), &  \\[0.5em]
		n\nun(x,0;a)                                               & = n^{\mathrm{in}}_{N}(x;a),               &                             \\[0.5em]
	\end{array}
	\right.
\end{align}
where
\begin{equation*}
	\underline{n}_{0, N} := \int_0^1 n_{0, N} \dx a.    
\end{equation*}

However, the second result of this work is to obtain compactness sufficient to pass to the joint limit and obtain the phenotypically stratified inviscid system
{\begin{align}
	\label{eq:Darcy_Phenotype}
	\left\{
	\begin{array}{rll}
		\ds \partialt{n_{0,\infty}} - \div\prt*{n_{0,\infty} \grad \underline{n}_{0,\infty}} \!\!\! & = \ds n_{0,\infty} G(\underline{n}_{0,\infty};a),   \\[0.5em]
		n_{0,\infty}(x,0;a)                                                                 & = n^{\mathrm{in}}(x;a),                                 & \\[0.5em]
	\end{array}
	\right.
\end{align}
where, as before,
\begin{equation*}
	\underline{n}_{0,\infty} := \int_0^1 n_{0, \infty} \dx a.
\end{equation*}

Below we introduce the notion of weak solutions that we will be working with in this paper.
\begin{definition}[Weak Solutions - Brinkman]
	\label{def:wk_sol_brinkman}
	We say that the pair $(n\nun,W\nun) $ is a \emph{weak solution} to System \eqref{eq:Brinkman_a} with nonnegative initial data $n\nun^{\mathrm{in}} \in L^1(\Rd)\cap L^\infty(\Rd)$ if, for almost every $a \in [0,1]$, $n\nun(\cdot\ ; a) \in L^\infty(0,T; L^1(\Rd) \cap L^\infty(\Rd))$ is nonnegative and there holds
	\begin{align*}
		\int_0^T\!\!\!  \int_\Rd & n\nun \partialt \varphi \dx x \dx t
		- \int_0^T \!\!\! \int_\Rd n\nun \nabla W\nun \cdot \nabla \varphi \dx x \dx t
		\\
		& = -\int_0^T \!\!\! \int_\Rd \varphi n\nun G_{N}(\underline{n}\nun; a) \dx x \dx t -\int_\Rd \varphi(x,0)n^{\mathrm{in}}\nun(x; a) \dx x ,
	\end{align*}
	for almost every $a\in[0,1]$ and any test function $\varphi \in C_{c}^{\infty}(\Rd \times [0,T))$, as well as
	\begin{align*}
		- \nu \Lap W\nun + W\nun = \underline{n}\nun,
	\end{align*}
	almost everywhere in $\Rd\times(0,T)$.\\ 
    When $N=\infty$, the same properties define a weak solution to System~\eqref{eq:Brinkman_Phenotype} (with the convention that $G_\infty = G$).
\end{definition}

At this stage, let us recall that any solution of Brinkman's equation can be expressed as the convolution with the fundamental solution, denoted by $K_\nu$, \ie, the solution of
\begin{align*}
	- \nu \Delta W\nun + W\nun = \underline{n}\nun,
\end{align*}
can be represented as
\begin{align*}
	W\nun = K_{\nu} \star \underline{n}\nun,
\end{align*}
where
\begin{equation*}
	K_{\nu}(x) = \frac{1}{4\pi}\int_0^\infty \exp\prt*{-\frac{\pi|x|^2}{4s\nu} - \frac{4s}{\pi}}s^{-d/2}\dx s.
\end{equation*}
In particular $K_\nu \geq 0$ and $\int K_\nu = 1$.\\

Regarding the inviscid limit, let us now introduce the notion of weak solution to the limiting Darcy models, System \eqref{eq:Darcy_a} and System~\eqref{eq:Darcy_Phenotype}.
\begin{definition}[Weak Solutions - Darcy]
	\label{def:wk_sol_darcy}
	We call  $n_{0,N} \geq 0$ a \emph{weak solution} to System \eqref{eq:Darcy_a} with nonnegative initial data $n_{0,N}^{\mathrm{in}} \in L^1(\Rd)\cap L^\infty(\Rd)$ if, for almost every $a\in [0,1]$,
    \begin{equation*}
        n_{0,N}(\cdot\,; a) \in L^\infty(0,T; L^1(\Rd)\cap L^\infty(\Rd))\quad\text{and}\quad \underline{n}_{0,N} \in L^2(0,T; H^1(\Rd)),
    \end{equation*}
 and there holds
	\begin{equation*}
		\begin{aligned}
			\int_0^T \!\!\! \int_\Rd
			 & n_{0,N} \partialt \varphi \dx x \dx t
			- \int_0^T \!\!\! \int_\Rd n_{0,N} \nabla \underline{n}_{0,N} \cdot \nabla \varphi \dx x \dx t                                                            \\
			 & =- \int_0^T \!\!\! \int_\Rd \varphi n_{0,N} G_{N}(\underline{n}_{0,N}; a) \dx x \dx t - \int_\Rd \varphi(x,0) n_{0,N}^{\mathrm{in}}(x; a) \dx x,
		\end{aligned}
	\end{equation*}
	for almost every $a\in[0,1]$ and any test function $\varphi \in C_{c}^{\infty}(\Rd\times[0,T))$.\\
 When $N=\infty$, the same properties define a weak solution to System~\eqref{eq:Darcy_Phenotype} (with the convention that $G_\infty = G$).
\end{definition}

\subsection{Main results}
Our first result concerns the existence of weak solutions to the $N$-species viscoelastic system, System \eqref{eq:Brinkman_i} as well as establishing an entropy inequality satisfied by the rescaled total population density.
\begin{lemma}[Existence of weak solutions]
	\label{lem:Existence}
	There exists a weak solution $ (n\i\nun,W\nun)_{i=1}^N $ of System~\eqref{eq:Brinkman_i} such that\\
	\begin{enumerate}[(i)]
		\setlength\itemsep{1.em}
		\item $ n\i\nun \in L^{\infty}(0,T;L^{1}\cap L^{\infty}(\Rd))$, uniformly in $\nu$ and $N$,
		\item $\exists C>0$ such that $0 \leq n\i\nun \leq  C $, uniformly in $\nu$ and $N$,
		\item $ W\nun \in L^{\infty}(0,T;L^{1}\cap L^{\infty}(\Rd))$, uniformly in $\nu$ and $N$,
		\item $W\nun \in L^{\infty}(0,T;W^{1,q}(\Rd)) \cap L^{\infty}(0,T;W^{2,r}(\Rd)) $,  for $ 1\leq q \leq \infty $ and $ 1 < r < \infty $, uniformly in $N$,\\
	\end{enumerate}
	and such that $ n\nun $ as defined in Eq.~\eqref{def:interpotation} satisfies System~\eqref{eq:Brinkman_a} in the sense of Definition~\ref{def:wk_sol_brinkman}.
\end{lemma}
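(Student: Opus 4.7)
The plan is to construct weak solutions of System~\eqref{eq:Brinkman_i} at fixed $\nu>0$ and $N\in\N$ via a Schauder fixed-point argument, then derive the uniform estimates (i)--(iv) by exploiting the convolution representation $W\nun = K_\nu \star \underline{n}\nun$ together with a pointwise maximum-principle argument applied to the averaged equation \eqref{eq:Brinkman_average}, and finally to pass to the Schauder limit. Concretely, I would define a map $\Phi$ on a convex compact subset of $L^1\cap L^\infty$-densities by the following procedure: given candidates $(\tilde n^{(i)})_{i=1}^N$, form $\tilde{\underline n} = \tfrac{1}{N}\sum_i \tilde n^{(i)}$, set $\tilde W = K_\nu\star\tilde{\underline n}$, and solve each linear transport-reaction equation $\partial_t n^{(i)} - \nabla\cdot(n^{(i)}\nabla\tilde W) = n^{(i)} G_N^{(i)}(\tilde{\underline n})$ with initial data $n^{(i),\text{in}}_N$ by classical parabolic theory, exploiting that $K_\nu$ is smooth so that $\nabla\tilde W$ and $\Delta\tilde W$ are bounded. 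Continuity and compactness of $\Phi$ will follow from Aubin-Lions (time regularity from the equation, spatial regularity from $L^2$-energy estimates), delivering a fixed point.

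For the a priori bounds, the $L^1$ estimate (i) is immediate after integrating the $i$th equation over $\Rd$ and applying Gr\"onwall with rate $\|G\|_\infty$. The $L^1\cap L^\infty$ control on $W\nun$ in (iii) follows from the kernel bound $K_\nu\geq 0$, $\|K_\nu\|_{L^1}=1$, combined with the already-established bounds on $\underline{n}\nun$. The Sobolev regularity in (iv) is classical elliptic regularity for the Bessel-type equation $-\nu\Delta W+W=\underline{n}$ (uniform in $N$, $\nu$-dependent for $W^{2,r}$). The crux is the $L^\infty$ bound (ii). I would first show $\underline{n}\nun\leq\bar n$ by a pointwise maximum principle on~\eqref{eq:Brinkman_average}: at a spatial maximum $x_0$ of $\underline{n}\nun(\cdot,t)$, the positivity and unit mass of $K_\nu$ force $W\nun(x_0,t)\leq\underline{n}\nun(x_0,t)$, so $\Delta W\nun(x_0,t)=(W\nun-\underline{n}\nun)/\nu\leq 0$; the monotonicity \ref{hyp:G2} together with the homeostatic pressure hypothesis \ref{hyp:G3} then guarantees that whenever $\underline{n}\nun(x_0,t)\geq\bar n$, the reaction term $\tfrac{1}{N}\sum_j n^{(j)}\nun G_N^{(j)}(\underline{n}\nun)$ is nonpositive. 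Combined with $\|\underline{n}^{\text{in}}_N\|_\infty\leq\bar n$ this gives $\underline{n}\nun\leq\bar n$ uniformly in $\nu,N$, and consequently $W\nun\leq\bar n$.

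The principal obstacle is promoting this bound on $\underline{n}\nun$ to an individual bound on $n^{(i)}\nun$ that is uniform in both $\nu$ and $N$: a direct maximum principle on $n^{(i)}\nun$ fails because the sign of $\Delta W\nun$ at a spatial maximum of $n^{(i)}\nun$ is uncontrolled, and naive pointwise estimates on $\Delta W\nun$ degenerate as $1/\nu$. My plan is to exploit the fact that along the characteristics $\dot X=-\nabla W\nun$, both $\dot n^{(i)}\nun = n^{(i)}\nun(\Delta W\nun + G_N^{(i)}(\underline{n}\nun))$ and $\dot{\underline n}\nun = \underline{n}\nun\Delta W\nun + \tfrac{1}{N}\sum_j n^{(j)}\nun G_N^{(j)}(\underline{n}\nun)$ share the same $\Delta W\nun$ term, so the ratio $r^{(i)} = n^{(i)}\nun/\underline{n}\nun$ obeys $\dot r^{(i)} = r^{(i)}\bigl(G_N^{(i)}(\underline{n}\nun) - \bar G\bigr)$ whose right-hand side is bounded by $2\|G\|_\infty$, uniformly in $\nu,N$. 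Coupled with $\underline{n}\nun\leq\bar n$ and the pointwise data bound $n^{(i),\text{in}}_N\leq\bar n$, this furnishes (ii). Finally, passage to the limit in the Schauder iteration via Aubin-Lions compactness, together with the derived uniform bounds used to identify the nonlinear product $n^{(i)}\nun\nabla W\nun$, produces a weak solution in the sense of Definition~\ref{def:wk_sol_brinkman}.
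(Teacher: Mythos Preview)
The paper takes a different route. Rather than a direct Schauder argument, it introduces a parabolic regularisation (adding $\epsilon\Delta n^{(i)}$ to each equation), obtains existence for the regularised system by citing prior work, derives uniform-in-$\epsilon$ estimates, and then passes to the limit $\epsilon\to 0$ using the Belgacem--Jabin quantitative compactness criterion. Your Schauder scheme has a genuine gap at the compactness step: the linear problems you solve in the iteration are first-order transport equations (not parabolic, despite your phrase ``classical parabolic theory''), and such equations provide no gain in spatial regularity. The ``$L^2$-energy estimates'' you invoke yield only an $L^2$ bound, not an $H^1$ bound, so Aubin--Lions does not furnish compactness of the map $\Phi$. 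This is precisely why the paper needs the $\epsilon$-regularisation and a delicate compactness lemma to close the argument.

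There is a second gap in your derivation of (ii). Your ratio argument along characteristics correctly gives $r^{(i)}(t)\le r^{(i)}(0)\,e^{2\|G\|_\infty t}$, but the conclusion $n^{(i)}\le C$ uniformly in $N$ then requires the initial ratio $r^{(i)}(0)=n^{(i),\mathrm{in}}_N/\underline{n}^{\mathrm{in}}_N$ to be bounded independently of $N$. Under the paper's hypotheses this ratio is a priori only controlled by $N$ (consider data concentrated in phenotype near a single value of $a$), so your argument as written delivers $n^{(i)}\le CN$, not a uniform bound. The paper instead sets up an $L^1$-comparison (a Kato-type $\mathrm{sign}_-$ argument) between $n_{\epsilon,\nu,N}(\cdot\,;a)$ and a multiple of $\underline n_{\epsilon,\nu,N}\,e^{2\beta t}$; your characteristics picture is morally the same idea, but the choice of comparison function must be revisited to extract an $N$-independent constant. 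As a minor point, $K_\nu$ is not smooth at the origin, so the claim that $\nabla\tilde W$ and $\Delta\tilde W$ are bounded ``because $K_\nu$ is smooth'' should be replaced by the elliptic identity $\Delta\tilde W=\nu^{-1}(\tilde W-\tilde{\underline n})$ together with $\nabla K_\nu\in L^1(\Rd)$.
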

\begin{lemma}[Entropy inequality]
	\label{lem:entropy-ineq}
	Let $ (n\i\nun,W\nun)_{i=1}^N $ be the weak solution constructed in Lemma \ref{lem:Existence}. Then, the following entropy inequality holds:
	\begin{align*}
		\curlyH[\underline{n}\nun](T) - \curlyH[\underline{n}\nun^{\mathrm{in}}]
		& - \int_0^T \!\!\! \int_\Rd  \underline{n}\nun \Lap W\nun  \dx{x}\dx{t} \\
		& \leq\int_0^T \!\!\! \int_\Rd \log \underline{n}\nun \int_{0}^{1} n\nun G_{N}(\underline{n}\nun;a)\dx{a} \dx x \dx t,
      \end{align*}
      where
      \begin{equation*}
	      \curlyH[f](t) \coloneqq \int_\Rd f(x, t)(\log f(x, t) - 1) \dx x.
      \end{equation*}
\end{lemma}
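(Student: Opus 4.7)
My plan is to start from the closed equation~\eqref{eq:Brinkman_average} for the rescaled total density $\underline{n}\nun$, multiply it by $\log \underline{n}\nun$, and integrate over $\Rd \times (0,T)$. Since $s \mapsto s(\log s - 1)$ is a primitive of $\log s$, the time derivative produces exactly $\curlyH[\underline{n}\nun](T) - \curlyH[\underline{n}\nun^{\mathrm{in}}]$. Using the chain rule $\underline{n}\nun \, \grad \log \underline{n}\nun = \grad \underline{n}\nun$ together with one integration by parts,
\begin{equation*}
-\int_0^T \!\!\! \int_\Rd \log \underline{n}\nun \, \div(\underline{n}\nun \grad W\nun) \dx x \dx t = -\int_0^T \!\!\! \int_\Rd \underline{n}\nun \Lap W\nun \dx x \dx t,
\end{equation*}
so the identity obtained is, at the formal level, exactly the statement of the lemma with equality. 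The $\leq$ will arise from dropping a nonnegative dissipation term produced by the regularization scheme that underpins the existence proof.

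\textbf{Rigorous computation on an approximate system.} The function $\log \underline{n}\nun$ is not an admissible test function in Definition~\ref{def:wk_sol_brinkman}, and the weak solution of Lemma~\ref{lem:Existence} is not regular enough to apply the chain rule in time to $\underline{n}\nun(\log\underline{n}\nun - 1)$ directly. I would therefore perform the computation on the approximate system underlying the proof of Lemma~\ref{lem:Existence} -- typically with an added viscosity $\varepsilon\Lap$ on each species and a Galerkin truncation -- where the densities are smooth and strictly positive. I would additionally regularize the logarithm by testing with $\log(\underline{n}\nun^{\varepsilon}+\delta)$ and recover the full logarithm by monotone/dominated convergence as $\delta \to 0$. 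The extra artificial viscosity generates a \emph{nonnegative} dissipation of Fisher-information type, $\varepsilon \int_0^T\!\!\int_\Rd |\grad \underline{n}\nun^{\varepsilon}|^2/(\underline{n}\nun^{\varepsilon}+\delta) \dx x \dx t$, which sits on the left-hand side of the identity.

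\textbf{Passage to the limit in the approximation parameter.} Sending $\delta \to 0$ and $\varepsilon \to 0$, the dissipation is positive and is simply discarded, which is precisely what converts the identity into the inequality of the statement. For the remaining terms the uniform bounds of Lemma~\ref{lem:Existence} are enough: the terminal entropy passes to the limit from below via the weak lower semicontinuity of the convex functional $f \mapsto \int f(\log f - 1)$; the transport term $\int \underline{n}\nun^{\varepsilon} \Lap W\nun^{\varepsilon}$ passes thanks to the $L^\infty_{x,t}$ bound on $\underline{n}\nun^{\varepsilon}$ together with the $L^\infty_t L^r_x$ bound on $\Lap W\nun^{\varepsilon}$ from item (iv); and the reaction term passes by dominated convergence, using the $L^\infty$ bound on $n\nun^{\varepsilon}$, the boundedness of $G$, and the a.e.\ convergence of $\underline{n}\nun^{\varepsilon}$ produced by the existence proof.

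\textbf{Main obstacle.} The delicate technical issue is to make sense of $\curlyH[\underline{n}\nun](t)$ in the first place: the $L^\infty$ bound controls only its positive part, whereas the negative part $-\int \underline{n}\nun (\log\underline{n}\nun)_-$ could blow up if $\underline{n}\nun$ spreads too far. This is where the second-moment hypothesis $\sup_a \int n^{\mathrm{in}}(x;a)|x|^2 \dx x < \infty$ becomes essential. Using the elementary estimate $-\int f \log f \leq C \int (1+|x|^2) f + C$, combined with a Grönwall argument on $\int \underline{n}\nun |x|^2 \dx x$ driven by the uniform Lipschitz bound on $W\nun$ (item (iv) of Lemma~\ref{lem:Existence}), one obtains a time-uniform bound on $\curlyH[\underline{n}\nun]$ and, in parallel, the integrability of $\log\underline{n}\nun$ against the $L^\infty$-bounded reaction source $\int_0^1 n\nun G_N(\underline{n}\nun;a)\dx a$ on the right-hand side.
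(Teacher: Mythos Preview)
Your proposal is correct and follows essentially the same route as the paper: work at the $\epsilon$-regularised level, test with $\log(\underline{n}_{\epsilon}+\delta)$ (the paper additionally localises with a cutoff $\phi\in C_c^\infty(\Rd)$, removed at the very end via the second-moment/entropy control you identified), discard the nonnegative Fisher dissipation $I_3^{\epsilon,\delta}=\epsilon\int|\nabla\underline{n}_\epsilon|^2/(\underline{n}_\epsilon+\delta)$, and pass $\epsilon,\delta\to 0$ term by term. The only notable difference is at the terminal time: the paper proves strong convergence $\underline{n}_{\epsilon}(T)\to\underline{n}(T)$ in $L^2(\Rd)$ via a separate Arzel\`a--Ascoli argument and then passes the entropy directly, whereas you invoke weak lower semicontinuity of $\mathcal H$ --- both work, but yours still requires first identifying the weak limit at $t=T$ as $\underline{n}(T)$, which is not free from $L^\infty_t L^p_x$ convergence alone.
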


\begin{theorem}[Continuous phenotype limit]
\label{thm:PhenotypeLimit}
	Let $\nu>0$ be fixed. For $N\in \N$, let $(n\i\nun, W\nun)_{i=1}^N $ be the weak solution constructed in Lemma \ref{lem:Existence}. Then, there exists a function $n_{\nu, \infty}: \Rd \times [0,T] \times [0,1] \to [0, \infty)$ with
    \begin{equation*}
        n_{\nu, \infty}(\cdot, \cdot, a)\in L^\infty(0,T; L^1(\Rd) \cap L^\infty(\Rd)),\quad\text{for almost every $a \in [0,1]$},
    \end{equation*}
    such that, up to a subsequence, 
    \begin{align*}
         n_{\nu, N} &\stackrel {\star}{\rightharpoonup} n_{\nu, \infty} \quad\text{in } L^\infty(0,T;L^p(\Rd)), 1\leq p\leq \infty,\\
        \underline{n}_{\nu, N} &\to \underline{n}_{\nu,\infty} \quad\text{in } L^2(0,T;L^2(\Rd)),\\
        W_{\nu, N} &\to W_{\nu,\infty}:=K_\nu\star\underline{n}_{\nu,\infty}  \quad\text{in } L^2(0,T;H^1_{\mathrm{loc}}(\Rd)).
    \end{align*}
    The limit $(n_{\nu,\infty}, W_{\nu,\infty})$ is a weak solution to System \eqref{eq:Brinkman_Phenotype} in the sense of Definition~\ref{def:wk_sol_brinkman}.
\end{theorem}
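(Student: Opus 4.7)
The starting point is the uniform-in-$N$ bounds of Lemma~\ref{lem:Existence}: by item (ii), $n_{\nu,N}$ is uniformly bounded in $L^\infty(\Rd\times(0,T)\times(0,1))$, and interpolating with the $L^\infty(0,T; L^1)$ control of item (i) gives uniform bounds in $L^\infty(0,T; L^p(\Rd))$ for every $p\in[1,\infty]$. The Banach--Alaoglu theorem yields a (non-relabelled) subsequence converging weakly-$\star$ to some nonnegative limit $n_{\nu,\infty}$; averaging against the phenotype variable gives $\underline{n}_{\nu,N}\rightharpoonup^\star\underline{n}_{\nu,\infty}$ in the same spaces.

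\textbf{Paragraph 2 --- Strong convergence of $W_{\nu,N}$.}
With $\nu>0$ fixed, Lemma~\ref{lem:Existence}(iv) provides a uniform-in-$N$ bound $W_{\nu,N}\in L^\infty(0,T; W^{2,r}(\Rd))$ for any $r\in(1,\infty)$. To complement this with time regularity, I differentiate Brinkman's equation in time, obtaining $-\nu\Lap\partial_t W_{\nu,N}+\partial_t W_{\nu,N}=\partial_t\underline{n}_{\nu,N}$; the right-hand side is bounded in $L^\infty(0,T; W^{-1,\infty}(\Rd))$ since Eq.~\eqref{eq:Brinkman_average} expresses $\partial_t\underline{n}_{\nu,N}$ as the divergence of the uniformly $L^\infty$-bounded flux $\underline{n}_{\nu,N}\grad W_{\nu,N}$ plus a bounded source. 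The two-derivative smoothing of the Brinkman resolvent yields $\partial_t W_{\nu,N}$ bounded in $L^\infty(0,T; W^{1,\infty}(\Rd))$ uniformly in $N$. Aubin--Lions then applies ball-by-ball through the compact embedding $W^{2,r}(B_R)\subset\subset W^{1,r}(B_R)$, producing the advertised strong convergence $W_{\nu,N}\to W_{\nu,\infty}$ in $L^2(0,T; H^1_{\mathrm{loc}}(\Rd))$; the identification $W_{\nu,\infty}=K_\nu\star\underline{n}_{\nu,\infty}$ follows by passing to the limit in the convolution formula using the weak-$\star$ convergence of $\underline{n}_{\nu,N}$.

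\textbf{Paragraph 3 --- Strong convergence of $\underline{n}_{\nu,N}$: the main obstacle.}
The crux is upgrading the weak-$\star$ convergence of $\underline{n}_{\nu,N}$ to strong $L^2(0,T; L^2(\Rd))$ convergence, which is delicate because standard estimates supply no a priori Sobolev regularity of $\underline{n}_{\nu,N}$, ruling out a direct Aubin--Lions argument on the sequence itself. My plan is to exploit the transport structure of Eq.~\eqref{eq:Brinkman_average}: $\underline{n}_{\nu,N}$ solves a linear continuity equation with velocity $-\grad W_{\nu,N}$ which, picking $r>d$ in Lemma~\ref{lem:Existence}(iv) and invoking Morrey's embedding, is Lipschitz in space uniformly in $N$ and strongly convergent in $L^2(0,T; L^r_{\mathrm{loc}}(\Rd))$ by Paragraph 2, while the source $\int_0^1 n_{\nu,N} G_N(\underline{n}_{\nu,N};a)\dx{a}$ is uniformly bounded in $L^\infty$. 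The interpolated initial data $\underline{n}^{\mathrm{in}}_N$ is a Riemann-type average of $n^{\mathrm{in}}(\cdot;\,\cdot)$, hence converges to $\underline{n}^{\mathrm{in}}=\int_0^1 n^{\mathrm{in}}(\cdot;a)\dx{a}$ strongly in $L^p(\Rd)$ for $p<\infty$ by dominated convergence. Moreover, the second-moment assumption propagates along Eq.~\eqref{eq:Brinkman_average} via a Gronwall estimate for $\int|x|^2\underline{n}_{\nu,N}\dx{x}$ (using the $L^\infty$ bound on $\grad W_{\nu,N}$), yielding tightness on $\Rd$ uniformly in $N$ and $t\in[0,T]$. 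The DiPerna--Lions stability theory (equivalently, a classical characteristic-flow argument in this Lipschitz regime) then yields strong convergence of $\underline{n}_{\nu,N}$ in $C([0,T]; L^p_{\mathrm{loc}}(\Rd))$ for $p<\infty$; combined with tightness and the uniform $L^\infty$ bound, interpolation upgrades this to strong convergence in $L^2(0,T; L^2(\Rd))$.

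\textbf{Paragraph 4 --- Passing to the limit in the weak formulation.}
Armed with weak-$\star$ convergence of $n_{\nu,N}$, strong $L^2(L^2)$ convergence of $\underline{n}_{\nu,N}$, and strong $L^2(H^1_{\mathrm{loc}})$ convergence of $W_{\nu,N}$, every term in the weak formulation of Definition~\ref{def:wk_sol_brinkman} passes to the limit. The time-derivative term relies on weak-$\star$ convergence of $n_{\nu,N}$; the flux $\int n_{\nu,N}\grad W_{\nu,N}\cdot\grad\varphi$ converges as the standard pairing of weak-$\star$ and strong convergence; and the growth term uses that, by the explicit definition of $G_N$ together with \eqref{hyp:G1}, $G_N(\cdot;a)\to G(\cdot;a)$ locally uniformly in $n$ and for almost every $a$, so that the strong $L^2(L^2)$ convergence of $\underline{n}_{\nu,N}$ pushes through a Nemytskii-type lemma to give $G_N(\underline{n}_{\nu,N};a)\to G(\underline{n}_{\nu,\infty};a)$ strongly in $L^2(L^2)$, which can then be paired with the weak-$\star$ convergence of $n_{\nu,N}$. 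Brinkman's equation is automatically preserved via the convolution identification established in Paragraph~2, completing the verification that $(n_{\nu,\infty}, W_{\nu,\infty})$ is a weak solution of System~\eqref{eq:Brinkman_Phenotype} in the sense of Definition~\ref{def:wk_sol_brinkman}.
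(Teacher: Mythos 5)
Your weak-compactness extraction and the strategy of using Aubin--Lions for $W_{\nu,N}$ match the paper (the paper uses $\partial_t W_{\nu,N}=K_\nu\star\partial_t\underline{n}_{\nu,N}\in L^2(0,T;H^{-1})$ rather than your $W^{-1,\infty}\to W^{1,\infty}$ elliptic-regularity step, but either gives a usable bound on $\partial_t W_{\nu,N}$; note only that $L^\infty(0,T;W^{2,r})$ for $r<\infty$ yields $\nabla W_{\nu,N}\in C^{0,1-d/r}$, not Lipschitz, and your Aubin--Lions triple $W^{2,r}\subset\subset W^{1,r}\hookrightarrow W^{1,\infty}$ is written with the wrong inclusion direction for the third space --- both are fixable cosmetic slips). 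Paragraph~4 is fine once Paragraph~3 is secured.

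\textbf{The genuine gap: DiPerna--Lions stability does not close for $\underline{n}_{\nu,N}$.}
You treat Eq.~\eqref{eq:Brinkman_average} as a linear continuity equation with strongly convergent velocity and conclude strong compactness of $\underline{n}_{\nu,N}$. But DiPerna--Lions stability requires the source to converge strongly (or at least be compact); weak convergence of the source only yields weak convergence of the solution. Here the source is $\int_0^1 n_{\nu,N}\,G_N(\underline{n}_{\nu,N};a)\,\dx a$, and $n_{\nu,N}$ is only weakly-$\star$ convergent with no phenotype regularity (it is piecewise constant in $a$), so the source has no a~priori strong compactness. Equivalently, at the level of moduli of continuity, the source difference is controlled by
\begin{equation*}
\abs*{\text{source}(x)-\text{source}(y)}\;\lesssim\;\int_0^1\abs*{n_{\nu,N}(x;a)-n_{\nu,N}(y;a)}\,\dx a \;+\; \abs*{\underline{n}_{\nu,N}(x)-\underline{n}_{\nu,N}(y)},
\end{equation*}
and the first term is \emph{not} bounded by the modulus of continuity of $\underline{n}_{\nu,N}$ alone. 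Any Gronwall/characteristics argument you run on $\underline{n}_{\nu,N}$ in isolation therefore produces an inhomogeneity you cannot absorb, so the bootstrap never closes. This is precisely why the paper instead propagates a \emph{joint} logarithmic modulus of continuity for the full interpolated family: the quantity $Q_h(t)+\underline{Q}_h(t)$, where $Q_h$ tracks $\int_0^1\abs*{n_{\nu,N}(x;a)-n_{\nu,N}(y;a)}\,\dx a$ and $\underline{Q}_h$ tracks the average. The Gronwall estimate \eqref{eq:Compactness_Gronwall}, derived at the $\epsilon$-level and then passed to $\epsilon\to0$ with constants independent of $N$, closes because both terms on the right-hand side are controlled by the same combined quantity. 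The conclusion then follows from the Belgacem--Jabin compactness criterion (Lemma~\ref{lem:CompactnessCriterion}), using the strong $L^1$-compactness of $W_{\nu,N}$ and of the (interpolated and averaged) initial data. To repair your argument you would need to enlarge the object you run the stability estimate on --- essentially rederiving the paper's joint Gronwall --- rather than working on $\underline{n}_{\nu,N}$ alone.
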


\begin{theorem}[Joint limit]
\label{thm:JointLimit}
	For $N\in \N$ and $\nu >0$, let $ (n\i\nun,W\nun)_{i=1}^N $ be the weak solution constructed in Lemma \ref{lem:Existence}. Then, there exists a function $n_{0, \infty}: \Rd \times [0,T] \times [0,1] \to [0, \infty)$ with 
    \begin{equation*}
        n_{0, \infty}(\cdot, \cdot\,; a)\in L^\infty(0,T; L^1(\Rd) \cap L^\infty(\Rd)),\quad \text{for almost every $a \in [0,1]$},
    \end{equation*}
    and
    \begin{equation*}
        \underline{n}_{0, \infty} \in L^2(0,T;H^1(\Rd)),
    \end{equation*}
    such that, up to a subsequence, 
    \begin{align*}
        n_{\nu, N} &\stackrel {\star}{\rightharpoonup} n_{0,\infty} \quad\text{in } L^\infty(0,T;L^p(\Rd)), 1\leq p\leq \infty,\\
        \underline{n}_{\nu, N} &\to \underline{n}_{0,\infty} \quad\text{in } L^2(0,T;L^2(\Rd)),\\
        W_{\nu, N} &\to \underline{n}_{0,\infty} \quad\text{in } L^2(0,T;H^1(\Rd)).
    \end{align*}
    The limit function $n_{0,\infty}$ is a weak solution to System~\eqref{eq:Darcy_Phenotype} in the sense of Definition \ref{def:wk_sol_darcy}.
\end{theorem}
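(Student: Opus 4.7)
The plan is to extract compactness in $(\nu, N)$ simultaneously from the uniform bounds of Lemma~\ref{lem:Existence} and the entropy inequality of Lemma~\ref{lem:entropy-ineq}, combining the ideas used for the continuous-phenotype limit (Theorem~\ref{thm:PhenotypeLimit}) with those developed in~\cite{DDMS2024} for the inviscid limit. The key observation is that Brinkman's law quantitatively controls the defect $W\nun-\underline{n}\nun$ by a power of $\nu$, so that in the joint limit $W\nun$ and $\underline{n}\nun$ collapse to one and the same object.

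First, from the uniform $L^1\cap L^\infty$ bounds on $n\i\nun$ and $W\nun$ provided by Lemma~\ref{lem:Existence}, I would extract, up to subsequences, weak-$\star$ limits. The dissipation hidden in Lemma~\ref{lem:entropy-ineq} is then activated as follows: integrating by parts and substituting $\nabla\underline{n}\nun=\nabla W\nun-\nu\nabla\Lap W\nun$ from Brinkman's law gives
\begin{equation*}
-\int_0^T\!\!\!\int_{\Rd}\underline{n}\nun\,\Lap W\nun\,\dx x\,\dx t \;=\; \int_0^T\!\!\!\int_{\Rd}|\nabla W\nun|^2\,\dx x\,\dx t \;+\; \nu\int_0^T\!\!\!\int_{\Rd}|\Lap W\nun|^2\,\dx x\,\dx t,
\end{equation*}
so Lemma~\ref{lem:entropy-ineq} yields uniform bounds on $\nabla W\nun$ in $L^2(0,T;L^2(\Rd))$ and on $\sqrt{\nu}\,\Lap W\nun$ in the same space. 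Since $W\nun-\underline{n}\nun=\nu\,\Lap W\nun$, the second bound implies $\|W\nun-\underline{n}\nun\|_{L^2(0,T;L^2(\Rd))}\leq C\sqrt{\nu}\to 0$, so both sequences share a common weak-$\star$ limit which lies in $L^2(0,T;H^1(\Rd))$ and which I denote by $\underline{n}\oinfty$.

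To upgrade to strong compactness I would bound $\partial_t\underline{n}\nun$ in $L^2(0,T;H^{-1}(\Rd))$ using the averaged equation~\eqref{eq:Brinkman_average} together with the uniform control on $\underline{n}\nun\nabla W\nun$ and on the growth term, and exploit the finite second-moment assumption on the initial data for tightness. An Aubin--Lions--Simon argument then delivers $\underline{n}\nun\to\underline{n}\oinfty$ and, via the defect estimate above, $W\nun\to\underline{n}\oinfty$ strongly in $L^2(0,T;L^2(\Rd))$. Applying the same compactness reasoning pointwise in $a$ to the per-species weak formulation of~\eqref{eq:Brinkman_a}, using the pointwise convergence $G_N(\cdot\,;a)\to G(\cdot\,;a)$ that follows from the continuity of $G$, yields strong convergence of $n\nun(\cdot,\cdot\,;a)$ for almost every $a\in[0,1]$; this is sufficient to pass to the limit in the nonlinear flux and in the growth term and to identify $\underline{n}\oinfty$ as a weak solution of~\eqref{eq:Darcy_Phenotype}.

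The main obstacle is upgrading the weak convergence $\nabla W\nun\rightharpoonup\nabla\underline{n}\oinfty$ to the strong $L^2(0,T;L^2(\Rd))$ convergence required by the statement. For this I would employ an energy-matching argument: having identified $\underline{n}\oinfty$ as a weak solution of~\eqref{eq:Darcy_Phenotype}, the formal chain rule applied to $\log\underline{n}\oinfty$ against the limit equation produces an entropy identity analogous to Lemma~\ref{lem:entropy-ineq}, but with dissipation exactly $\int|\nabla\underline{n}\oinfty|^2\,\dx x\,\dx t$. Comparing this identity with the $\limsup$ of the prelimit inequality---exploiting weak lower semicontinuity of $\curlyH$ and the already-established strong $L^2(L^2)$ convergence of $\underline{n}\nun$ to pass to the limit on the right-hand side---forces $\limsup\|\nabla W\nun\|_{L^2(0,T;L^2(\Rd))}\leq\|\nabla\underline{n}\oinfty\|_{L^2(0,T;L^2(\Rd))}$. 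Combined with the weak convergence, this delivers strong convergence of the gradients, and hence of $W\nun$ in $L^2(0,T;H^1(\Rd))$, completing the proof.
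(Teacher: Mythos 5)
Your overall strategy matches the paper's: extract weak-$\star$ limits from the uniform bounds, activate the entropy inequality of Lemma~\ref{lem:entropy-ineq} to bound $\nabla W\nun$ in $L^2$ and $\sqrt{\nu}\,\Lap W\nun$ in $L^2$, use the resulting defect estimate $\norm{W\nun-\underline{n}\nun}_{L^2}\leq C\sqrt{\nu}$ to collapse the two limits, obtain strong $L^2$ convergence of $\underline{n}\nun$ by Aubin--Lions plus tightness, and finally upgrade $\nabla W\nun$ to strong $L^2$ convergence by comparing the prelimit entropy inequality with the limit entropy identity. This is faithful to the paper's argument.

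There is, however, a genuine gap in the middle of your argument. You claim that ``applying the same compactness reasoning pointwise in $a$ \ldots yields strong convergence of $n\nun(\cdot,\cdot\,;a)$ for almost every $a\in[0,1]$''. That step is not justified, and in fact the mechanism you used for $\underline{n}\nun$ does not transfer to the individual species. The strong compactness of $\underline{n}\nun$ hinges entirely on Brinkman's law: $\underline{n}\nun$ differs from $W\nun$ by the $O(\sqrt{\nu})$ defect $\nu\Lap W\nun$, and $W\nun$ inherits spatial regularity from the entropy dissipation. No such elliptic relation links an individual $n\i\nun$ to $W\nun$; the per-species equation is a pure transport equation (after $\epsilon\to 0$) with no diffusive term, so an Aubin--Lions argument has nothing to grip onto in the space variable. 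In the joint limit $\nu\to0$ the velocity field degenerates to $\nabla\underline{n}\oinfty\in L^2$, so transport compactness for $n\i\nun$ would be delicate at best. The paper sidesteps this entirely: it only ever uses weak-$\star$ convergence of $n\nun(\cdot,\cdot\,;a)$, which suffices because in the weak formulation $n\nun$ always appears paired against strongly convergent factors --- $\nabla W\nun\to\nabla\underline{n}\oinfty$ strongly in $L^2$, and $G_N(\underline{n}\nun;a)\to G(\underline{n}\oinfty;a)$ strongly by Proposition~\ref{prop:Data_Strong_N}. You should drop the strong-convergence claim for $n\nun$ and instead rely on the weak-times-strong structure of the nonlinear terms.

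Two smaller points on the final energy-matching step. First, ``weak lower semicontinuity of $\curlyH$'' at time $T$ requires knowing that $\underline{n}\nun(T)\rightharpoonup\underline{n}\oinfty(T)$ weakly in $L^1(\Rd)$, which in turn needs the time-continuity representative of Proposition~\ref{prop:TimeContinuity} and an identification argument (Proposition~\ref{prop:FinalTimeL^1}); this is a nontrivial technical point worth flagging rather than absorbing into ``lower semicontinuity''. Second, passing to the limit in the reaction term $\int\!\int\log\underline{n}\nun\int_0^1 n\nun G_N(\underline{n}\nun;a)\dx a$ is not an immediate consequence of strong $L^2$ convergence of $\underline{n}\nun$ because of the singularity of $\log$ near the origin; the paper handles this with Egorov's theorem and a decomposition of the domain according to the size of $\underline{n}\oinfty$.
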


\section{Weak solutions and the entropy inequality}
\label{sec:weak-sol-and-entropy}
This section is devoted to proving Lemma~\ref{lem:Existence} and Lemma \ref{lem:entropy-ineq}. To this end, we first introduce a parabolic regularisation and, in a series of auxiliary propositions, derive the desired uniform bounds. Then, we prove that the solutions to the regularised system converge strongly to weak solutions of System~\eqref{eq:Brinkman_i}. Finally, we establish the entropy inequality.

\subsection{Approximate system}
Consider the following parabolic regularisation of System~\eqref{eq:Brinkman_i}
{\begin{align}
	\label{eq:regularised_Brinkman_i}
	\left\{
	\begin{array}{rll}
		\ds \partialt{n\i\epsnun} - \epsilon \Lap n\i\epsnun  \!\!\! & = \div\prt*{n\i\epsnun \grad W\epsnun}  + \ds n\i\epsnun G\i_{N}(\underline{n}\epsnun ), & i = 1, \dots, N, \\[0.5em]
		n\i\epsnun(x,0)                                              & = n^{(i),\mathrm{in}}_{\epsilon,N}(x),                                                   &                 \\[0.5em]
		\ds -\nu \Lap W\epsnun +W\epsnun \!\!\!                      & = \ds  \underline{n}\epsnun.                                                       &
	\end{array}
	\right.
\end{align}
As in the previous section, we introduce the piecewise constant interpolation in phenotype
\begin{equation*}
	n_{\epsilon,\nu,N}(x,t;a) := \sum_{i=1}^{N} n\i_{\epsilon,\nu,N}(x,t) \indicator_{\left(\frac{i-1}{N},\frac{i}{N}\right]}(a),
\end{equation*}
and rewrite System \eqref{eq:regularised_Brinkman_i} as 
{\begin{align}
	\label{eq:regularized_Brinkman_a}
	\left\{
	\begin{array}{rll}
		\ds \partialt{n\epsnun}- \epsilon \Lap n\epsnun\!\!\!
		                                        & =  \div\prt*{n\epsnun \grad W\epsnun}  + \ds n\epsnun G_{N}(\underline{n}\epsnun; a), & \forall  a \in [0,1], \\[0.5em]
		n\epsnun(x,0;a)                         & = n^{\mathrm{in}}_{\epsilon,N}(x;a),                                                   &                      \\[0.5em]
		\ds -\nu \Lap W\epsnun +W\epsnun \!\!\! & = \ds \underline{n}\epsnun.                                                            &
	\end{array}
	\right.
\end{align}

\textbf{Regularised initial data.} The initial data for the above regularised system is given as follows. Let $n^{\mathrm{in}}_\epsilon \in C^\infty_c(\R^d \times [0,1])$ approximate $n^{\mathrm{in}}$ such that
\begin{equation*}
	\norm{n_\epsilon^{\mathrm{in}}(\cdot\,; a)}_{L^p(\R^d)} \lesssim \norm{n^{\mathrm{in}}(\cdot\,; a)}_{L^p(\R^d)},
\end{equation*}
for any $1\leq p\leq \infty$ and all $a\in[0,1]$, and
$$
	n_\epsilon^{\mathrm{in}}(\cdot\,; a) \longrightarrow n^{\mathrm{in}}(\cdot\,; a),
$$ 
strongly in $L^1(\Rd)$ for each $a\in[0,1]$ as well as almost everywhere in $\R^d \times [0,1]$. Finally, $n^{(i),\mathrm{in}}_{\epsilon,N}$, $n^{\mathrm{in}}_{\epsilon,N}$, and $\underline{n}_{\epsilon,N}^{\mathrm{in}}$ are defined in the same way as in the previous section.
Let us point out that the condition
\begin{equation*}
    \int_{\Rd} n^{\mathrm{in}}|x|^2 \dx x < \infty
\end{equation*}
can be guaranteed to be preserved by the approximation of the initial data, i.e., we have
\begin{equation}
\label{eq: 2nd moment data}
    \sup_{\epsilon>0}\,\int_{\Rd} n_\epsilon^{\mathrm{in}}|x|^2 \dx x < \infty.
\end{equation}
Indeed, let $R>0$ be fixed. Then, for $\epsilon$ small enough we have
\begin{align*}
    \int_{\Rd} n_\epsilon^{\mathrm{in}}|x|^2\indicator_{B_R(0)} \dx x \leq 1 + \int_{\Rd} n^{\mathrm{in}}|x|^2 \dx x.
\end{align*}
Passing to the limit $R\to\infty$, we obtain~\eqref{eq: 2nd moment data}.

\textbf{Existence of solutions.}
Following the construction of \cite{ES2023}, we obtain a nonnegative solution $n\i_{\epsilon,\nu,N}$ which is bounded uniformly in $\epsilon$ in the following sense
\begin{align*}
	n\i_{\epsilon,\nu,N}\in L^\infty(0,T;L^1\cap L^\infty(\R^d)),\quad
	\partial_t n\i_{\epsilon,\nu,N} & \in L^2(0,T;H^{-1}(\R^d)).
\end{align*}
Moreover, the following regularity holds for each $\epsilon>0$
\begin{align*}
	n\i_{\epsilon,\nu,N}           \in L^2(0,T;H^1(\R^d)),\quad
	\partial_t n\i_{\epsilon,\nu,N}  \in L^2(0,T;L^2(\R^d)).
\end{align*}

\subsection{Uniform estimates in $\nu$ and $N$}

Before proving strong convergence of the regularised densities towards a weak solution of the Brinkman system, let us discuss some properties of the regularised rescaled total population density, $\underline{n}\epsnun$, which satisfies the equation
\begin{align}
	\label{eq:regularised-equation}
	\left\{
	\begin{array}{rll}
		\ds \partialt {\underline{n}\epsnun} - \epsilon \Lap \underline{n}\epsnun \!\!\!
		 & = \nabla \cdot (\underline{n}\epsnun \nabla W\epsnun)                                                 \\[0.5em]
		 & \quad \ds + \int_{0}^{1}  n\epsnun G_{N}(\underline{n}\epsnun;a)\dx{a},
		 & \text{in } \Rd\times(0,T)\times[0,1],                                                                 \\[0.75em]
		\underline{n}\epsnun(x,0) \!\!\!
		 & = \ds\underline{n}_{\epsilon,N}^{\mathrm{in}}(x)\coloneqq  \int_{0}^{1}  n_{\epsilon,N}^{\mathrm{in}}(x;a)\dx{a}.
		 &
	\end{array}
	\right.
\end{align}

\begin{proposition}[$L^\infty$ for total population]
    There holds
    \begin{align*}
        0 \leq \underline n \epsnun \leq \bar n,
    \end{align*}
    almost everywhere.
\end{proposition}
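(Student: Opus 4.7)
The inequality splits naturally: nonnegativity follows from a linear parabolic maximum principle applied to each species, while the upper bound $\underline{n}\epsnun \leq \bar{n}$ requires a nonlinear maximum principle argument for the rescaled total density, crucially exploiting the convolution representation of $W\epsnun$ inherited from Brinkman's law.

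First, I would establish nonnegativity of every $n\i\epsnun$. Rewriting the equation in non-divergence form,
\begin{align*}
  \partialt{n\i\epsnun} - \epsilon \Lap n\i\epsnun - \grad W\epsnun \cdot \grad n\i\epsnun - \prt*{\Lap W\epsnun + G_N\i(\underline{n}\epsnun)} n\i\epsnun = 0,
\end{align*}
one recognises a linear parabolic equation with drift $\grad W\epsnun$ and zeroth-order coefficient $\Lap W\epsnun + G_N\i(\underline{n}\epsnun)$ both in $L^\infty$, by the regularity of $W\epsnun$ and hypothesis (\ref{hyp:G1}). Testing against $-(n\i\epsnun)_-$, integrating by parts, discarding the nonnegative diffusion contribution, and invoking Gronwall with the nonnegative initial data $n^{(i),\mathrm{in}}_{\epsilon,N}$ yields $n\i\epsnun \geq 0$, whence $\underline{n}\epsnun = N^{-1}\sum_{i=1}^N n\i\epsnun \geq 0$.

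For the upper bound, I would sum the species equations, divide by $N$, and rewrite using $\Lap W\epsnun = (W\epsnun - \underline{n}\epsnun)/\nu$ to obtain
\begin{align*}
  \partialt {\underline{n}\epsnun} - \epsilon \Lap \underline{n}\epsnun - \grad W\epsnun \cdot \grad \underline{n}\epsnun = \frac{\underline{n}\epsnun(W\epsnun - \underline{n}\epsnun)}{\nu} + \int_0^1 n\epsnun G_N(\underline{n}\epsnun; a) \dx a.
\end{align*}
The decisive fact is that $W\epsnun = K_\nu \star \underline{n}\epsnun$ with $K_\nu \geq 0$ and $\int K_\nu = 1$, so $\|W\epsnun(\cdot, t)\|_{L^\infty} \leq \|\underline{n}\epsnun(\cdot, t)\|_{L^\infty}$ pointwise. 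Suppose by contradiction that $\|\underline{n}\epsnun\|_{L^\infty}$ ever exceeds $\bar{n}$, and let $t^\star>0$ be the first such time and $x^\star \in \Rd$ a maximising point (the existence of a maximiser being ensured by smoothness of the regularised solutions together with spatial decay inherited from the $L^1$ data; otherwise one perturbs by $-\eta |x|^2/(1+t)$ and sends $\eta \to 0^+$). At $(x^\star, t^\star)$ one has $\grad \underline{n}\epsnun = 0$, $\Lap \underline{n}\epsnun \leq 0$, and $W\epsnun(x^\star, t^\star) \leq \underline{n}\epsnun(x^\star, t^\star)$, so the Brinkman source is nonpositive; moreover, by (\ref{hyp:G2})--(\ref{hyp:G3}), $G(\underline{n}\epsnun(x^\star, t^\star); a) < G(n^\star(a); a) = 0$ for every $a \in [0,1]$, making the integral term strictly negative since $\underline{n}\epsnun(x^\star, t^\star) > 0$. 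Consequently $\partial_t \underline{n}\epsnun(x^\star, t^\star) < 0$, contradicting the definition of $t^\star$.

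The hard part will be the non-local Brinkman coupling: the zeroth-order coefficient $(W\epsnun - \underline{n}\epsnun)/\nu$ has no definite sign globally, so a plain linear comparison principle is unavailable; it is precisely the convolution inequality $W\epsnun \leq \|\underline{n}\epsnun\|_{L^\infty}$ that restores a favourable sign at the maximising point and closes the argument.
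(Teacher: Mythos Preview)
Your proposal is correct and follows essentially the same approach as the paper: a pointwise maximum-principle argument at a putative maximiser $(x^\star,t^\star)$, exploiting $\nabla\underline{n}=0$, $\Delta\underline{n}\leq 0$, the convolution inequality $W\epsnun\leq \|\underline{n}\epsnun\|_{L^\infty}$ to sign the Brinkman term $\nu^{-1}\underline{n}(W-\underline{n})\leq 0$, and hypotheses (\ref{hyp:G2})--(\ref{hyp:G3}) to make the reaction term strictly negative. The paper is terser---it does not spell out the nonnegativity argument or the existence of the maximiser---but the mechanism is identical.
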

\begin{proof}
    The uniform $L^\infty$-bound is obtained by a standard argument following~\cite{TangEtAl}. To this end, we assume $(x^\star, t^\star)$ is a maximum point for $\underline n\epsnun$ and $\underline n\epsnun(x^\star, t^\star) > \bar n$. In such a point, there holds
\begin{align*}
	0 = \partialt {\underline n\epsnun}(x^\star, t^\star) 
	&= \epsilon \Delta \underline n\epsnun(x^\star, t^\star) + \nabla \underline n\epsnun(x^\star, t^\star) \cdot \nabla W\epsnun(x^\star, t^\star) \\
	&+ \underline n\epsnun(x^\star, t^\star) \Delta W\epsnun(x^\star, t^\star) + \int_0^1 n\epsnun(x^\star, t^\star) G(\underline  n\epsnun(x^\star, t^\star); a) \dx a\\
	&< \nu^{-1} \underline n\epsnun(x^\star, t^\star)(W\epsnun(x^\star, t^\star) - \underline n\epsnun(x^\star, t^\star))\\
	&\leq 0,
\end{align*}
which is a contradiction. Hence, $0 \leq \underline n\epsnun \leq \bar n$.
\end{proof}

As an immediate consequence, let us integrate Equation~\eqref{eq:regularised_Brinkman_i} to obtain
\begin{equation*}
    \ddt \norm{n\i\epsnun}_{L^1(\Rd)} \leq \norm{G}_{L^\infty([0,\bar n] \times [0,1])}\norm{n\i\epsnun}_{L^1(\Rd)},
\end{equation*}
so that, by Gronwall's lemma, we deduce that $n\i\epsnun \in L^\infty(0,T;L^1(\Rd))$, uniformly in $\epsilon$, $\nu$, and $N$.
Using the uniform $L^\infty$-control on the rescaled total population we also get the following result.

\begin{proposition}[Uniform $L^\infty$-control]
    For $N\in\N$, $\nu>0$ and $\epsilon>0$, let $({n\i\epsnun})_{i=1}^N$ be the solution to Eq. \eqref{eq:regularised_Brinkman_i}. Then, there holds
    \begin{align}
        0 \leq n\i\epsnun \leq C, 
    \end{align}
    where the constant $C>0$ is independent of $N$, $\nu$, and $\epsilon$. Furthermore, there holds
    \begin{align*}
        0 \leq n\epsnun(x,t;a) \leq C,
    \end{align*}
    almost everywhere with the same constant $C$.
\end{proposition}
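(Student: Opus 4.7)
The plan is to adapt the maximum-principle argument of the preceding proposition, now at the level of individual species $n\i\epsnun$, and to transfer the resulting bound to the interpolated density via its piecewise-constant definition~\eqref{def:interpotation}. The \emph{nonnegativity} is classical: the right-hand side of \eqref{eq:regularised_Brinkman_i} vanishes wherever $n\i\epsnun = 0$ and the initial data are nonnegative, so the standard parabolic comparison principle (already built into the existence theory carried over from \cite{ES2023}) yields $n\i\epsnun \geq 0$ almost everywhere.

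For the \emph{uniform upper bound}, I would expand the divergence term in \eqref{eq:regularised_Brinkman_i} and substitute Brinkman's identity $\Lap W\epsnun = \nu^{-1}(W\epsnun - \underline{n}\epsnun)$ to cast the equation for each species in non-divergence form,
\[
\partialt{n\i\epsnun} - \epsilon \Lap n\i\epsnun - \grad W\epsnun \cdot \grad n\i\epsnun
= n\i\epsnun \left[\frac{W\epsnun - \underline{n}\epsnun}{\nu} + G\i_N(\underline{n}\epsnun)\right].
\]
Evaluating at a putative spatio-temporal maximum $(x^\star, t^\star)$ of $n\i\epsnun$ (constructed via the same truncation-at-infinity as in the preceding proposition), the diffusion and transport contributions are nonpositive, which should deliver a Gronwall-type differential inequality for $t \mapsto \|n\i\epsnun(\cdot,t)\|_{L^\infty(\Rd)}$. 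Combined with the initial estimate $\|n^{(i),\mathrm{in}}_N\|_{L^\infty(\Rd)} \leq \bar{n}$, this would give an exponential bound $C = C(\bar{n}, \|G\|_{L^\infty}, T)$ that is independent of $N$, $\nu$, and $\epsilon$. The bound on the interpolated density $n\epsnun(x,t;a)$ is then immediate from \eqref{def:interpotation}, since it is a piecewise-constant interpolation of the individual $n\i\epsnun$ in the phenotype variable.

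The \emph{main obstacle} is controlling the Brinkman contribution $\nu^{-1}(W\epsnun - \underline{n}\epsnun)$ uniformly in $\nu$: only $O(1)$, not $O(\nu)$, pointwise control is available \emph{a priori}, so the $\nu^{-1}$ prefactor is \emph{prima facie} singular. I would attempt to absorb it by combining two ingredients. First, the convolution representation $W\epsnun = K_\nu \star \underline{n}\epsnun$ with $K_\nu \geq 0$ and $\|K_\nu\|_{L^1(\Rd)} = 1$, which yields the $\nu$-independent bound $\|W\epsnun\|_{L^\infty} \leq \|\underline{n}\epsnun\|_{L^\infty} \leq \bar{n}$ via Young's convolution inequality together with the previously established $L^\infty$-control on $\underline{n}\epsnun$. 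Second, the sign of $G\i_N(\underline{n}\epsnun)$ dictated by the monotonicity~\ref{hyp:G2} and the homeostatic assumption~\ref{hyp:G3}, which produces a compensating nonpositive contribution precisely in the regime where $\underline{n}\epsnun$ approaches $\bar{n}$. Playing off these two observations at the maximum point $(x^\star, t^\star)$, exactly as in the proof of the $\underline{n}\epsnun$ bound where $\underline{n}(W - \underline{n}) \leq 0$ did the job, is what is expected to neutralise the singular $\nu^{-1}(W\epsnun - \underline{n}\epsnun)$ and deliver the claimed $\nu$-uniform estimate.
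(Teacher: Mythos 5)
Your proposal has a genuine gap, and you correctly diagnose where it is but do not resolve it. At a spatio-temporal maximum of $n\i\epsnun$, the non-divergence form produces the term
\[
n\i\epsnun\,\Delta W\epsnun = \frac{1}{\nu}\, n\i\epsnun\bigl(W\epsnun - \underline{n}\epsnun\bigr),
\]
and in the proof for the total population $\underline n\epsnun$ the paper could close the argument because, at a maximum of $\underline n\epsnun$ exceeding $\bar n$, one has $W\epsnun = K_\nu\star \underline n\epsnun \le \bar n < \underline n\epsnun(x^\star,t^\star)$, giving a definite sign. At a maximum of a \emph{single} species $n\i\epsnun$, however, $\underline n\epsnun$ can be arbitrarily small (only the crude inequality $\underline n\epsnun \ge N^{-1} n\i\epsnun$ holds, which is useless for large $N$). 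So $W\epsnun - \underline n\epsnun$ can be positive there, and $G\i_N(\underline n\epsnun)$ is then \emph{also} positive by~\ref{hyp:G2}--\ref{hyp:G3}, not compensating. The best a priori bound on the Brinkman term is $O(\nu^{-1}\bar n)$, which is precisely the $\nu$-dependence one must avoid. The two ``ingredients'' you cite therefore do not neutralise the singular prefactor.

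The paper sidesteps the issue entirely by \emph{never} expanding $\Delta W\epsnun$. It compares $n\epsnun(\cdot\,;a)$ to the barrier $\Psi := \alpha\, \underline n\epsnun\, e^{2\beta t}$ with $\alpha = \bar n^{-1}\|n^{\mathrm{in}}_{\epsilon,N}\|_{L^\infty}$, $\beta = \|G\|_{L^\infty}$, and differentiates $\int_{\Rd}|\Psi - n\epsnun|_-\,\dx x$. Because $\Psi$ is a multiple of $\underline n\epsnun$, which is transported by the \emph{same} velocity $\nabla W\epsnun$, the divergence contributions combine into $\nabla\cdot\bigl((\Psi-n\epsnun)\nabla W\epsnun\bigr)$, which is killed under $\mathrm{sign}_-(\Psi-n\epsnun)$ by the Kruzhkov/Kato mechanism. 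No $\Delta W\epsnun$ and hence no $\nu^{-1}$ appears; the reaction terms are absorbed into a Gronwall inequality driven by the $2\beta\Psi$ cushion and $\Psi(0)\ge n\epsnun(0;\cdot)$. If you want to repair your argument along the lines you sketched, you would need this change of viewpoint: compare against a transported barrier built from $\underline n\epsnun$, rather than trying to run a pointwise maximum principle at the species level.
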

\begin{proof}
We define the quantity
\begin{equation*}
    \Psi(x, t) := \alpha  \underline n\epsnun (x, t) e^{2 \beta t},
\end{equation*}
where
\begin{equation*}
    \alpha = \frac{1}{\bar n} \left\| n_{\epsilon, N}^{\mathrm{in}} \right\|_{L^\infty(\mathbb{R}^d \times [0,1])}\quad \text{and}\quad\beta = \left\| G \right\|_{L^\infty([0,\bar n] \times [0,1])} .
\end{equation*}

Let $|u|_{-}$ and $\sign_{-}(u)$ we denote the negative part and negative sign of the function $u$, i.e.,
    \begin{equation*}
        |u|_{-} :=
        \begin{cases}
            -u, & \text{for $u<0$,}\\
            0, & \text{for $u\geq 0$,}
        \end{cases}
        \qquad \text{and}\qquad \sign_{-}(u):=
        \begin{cases}
            -1, & \text{for $u<0$,}\\
            0, & \text{for $u\geq 0$.}
        \end{cases}
    \end{equation*}
Then, we consider
\begin{align*}
    &\ddt \int_{\Rd} \left| \Psi(x, t) - n\epsnun(x, t; a) \right|_- \, \dx x
    = \int_{\Rd} \mathrm{sign}_- \left( \Psi - n\epsnun \right) \left( \partial_t \Psi - \partial_t n\epsnun \right) \dx x\\
    &= \int_{\Rd} \mathrm{sign}_-\left( \Psi - n\epsnun \right) \left( 2 \beta \Psi + \alpha e^{2 \beta t} \partial_t \underline n\epsnun - \partial_t n\epsnun \right) \dx x\\
    &= \int_{\Rd} \mathrm{sign}_-\left( \Psi - n\epsnun \right) \nabla \cdot \prt*{\left( \Psi - n\epsnun \right) \nabla W\epsnun} \dx x  \\
    &\, + \int_{\Rd} \mathrm{sign}_-\left( \Psi - n\epsnun \right) \left( 2 \beta \Psi + \alpha e^{2 \beta t} \int_0^1 n\epsnun G_N(\underline{n}\epsnun; \tilde a) d \tilde a - n\epsnun G_N(\underline{n}\epsnun;a) \right) \dx x\\
    & \leq \int_{\Rd} \mathrm{sign}_-\left( \Psi - n\epsnun \right) \left( \beta \Psi - n\epsnun G_N(\underline{n}\epsnun;a) \right) \dx x\\
    & \leq \beta \int_{\Rd} \left| \Psi - n\epsnun \right|_{-} \dx x.
\end{align*}

By Gronwall's Lemma, we obtain
\begin{equation*}
    0 \leq \sup_{t \in [0,T]} \sup_{a \in [0,1]} \int_{\Rd} \left| \Psi - n\epsnun \right|_- \dx x \leq 0,
\end{equation*}
and therefore
\begin{equation*}
    0 \leq n\epsnun(x, t; a) \leq \left\| n_{\epsilon, N}^{\mathrm{in}} \right\|_{L^\infty(\Rd)} e^{2 \left\| G \right\|_{L^\infty([0,\bar n] \times [0,1])}T}.
\end{equation*}

The result follows by observing that
\begin{equation*}
    \left\|n_{\epsilon, N}^{\mathrm{in}}\right\|_{L^\infty(\Rd)} \lesssim \left\|n^{\mathrm{in}}\right\|_{L^\infty(\Rd)}.
\end{equation*}
\end{proof}

Having established the uniform $L^p$-bounds, we derive further regularity results.
First, we prove a bound on the gradient of the solution
to Eq.~\eqref{eq:regularised-equation}.
\begin{proposition}
	\label{prop:regular_n_bounds}
	Let $(\underline{n}\epsnun)_{\epsilon>0}$ be the family of solutions to Eq.~\eqref{eq:regularised-equation}. Then, there holds
	\begin{align*}
		\sqrt{\epsilon} \norm*{\nabla \underline{n}\epsnun}_{L^2(0,T; L^2(\Rd))} \leq C,
	\end{align*}
	for some constant $C>0$ independent of $\epsilon$, $\nu$, and $N$.
\end{proposition}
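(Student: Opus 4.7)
The plan is to obtain the estimate via a straightforward $L^2$ energy argument, testing Eq.~\eqref{eq:regularised-equation} against $\underline{n}\epsnun$ itself. Since for each fixed $\epsilon>0$ the regularised solution enjoys $n\i\epsnun \in L^2(0,T;H^1(\Rd))$ with $\partial_t n\i\epsnun \in L^2(0,T;L^2(\Rd))$, all the manipulations below are rigorously justified. After integration by parts in both the diffusive and drift terms, one reaches the identity
\begin{equation*}
    \ddt \frac{1}{2}\norm{\underline{n}\epsnun}_{L^2(\Rd)}^2 + \epsilon \norm{\nabla \underline{n}\epsnun}_{L^2(\Rd)}^2 = \frac{1}{2}\int_\Rd \underline{n}\epsnun^2 \Lap W\epsnun \dx x + \int_\Rd \underline{n}\epsnun \int_0^1 n\epsnun G_{N}(\underline{n}\epsnun; a)\dx a \dx x.
\end{equation*}

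The entire difficulty resides in the first term on the right-hand side. Brinkman's law gives $\Lap W\epsnun = (W\epsnun - \underline{n}\epsnun)/\nu$, turning it into $\frac{1}{2\nu}\int_\Rd \underline{n}\epsnun^2 (W\epsnun - \underline{n}\epsnun) \dx x$, which at first glance looks catastrophic because of the $\nu^{-1}$ prefactor. The key observation is that $W\epsnun = K_\nu \star \underline{n}\epsnun$ with $K_\nu \geq 0$ and $\int_\Rd K_\nu = 1$, so Young's convolution inequality combined with H\"older's inequality yields
\begin{equation*}
    \int_\Rd \underline{n}\epsnun^2 W\epsnun \dx x \leq \norm{\underline{n}\epsnun}_{L^3(\Rd)}^2 \norm{K_\nu \star \underline{n}\epsnun}_{L^3(\Rd)} \leq \norm{\underline{n}\epsnun}_{L^3(\Rd)}^3 = \int_\Rd \underline{n}\epsnun^3 \dx x,
\end{equation*}
so that the entire Brinkman contribution is \emph{non-positive}, independently of $\nu$. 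Equivalently, one can pass to the double integral form $\int\int K_\nu(x-y)\underline{n}(x)^2 \underline{n}(y)\dx y \dx x$ and apply the AM-GM inequality $\underline{n}(x)^2 \underline{n}(y) \leq \tfrac{2}{3}\underline{n}(x)^3 + \tfrac{1}{3}\underline{n}(y)^3$, which after exchanging the order of integration produces the same cancellation.

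Once this cancellation is secured, the reaction term is controlled in a routine fashion using the uniform $L^\infty$-bounds already obtained together with $\norm{G_N}_{L^\infty} \leq \norm{G}_{L^\infty([0,\bar n]\times[0,1])}$, yielding an estimate of the form $C \norm{\underline{n}\epsnun}_{L^1(\Rd)}$ with the $L^1$-norm uniformly controlled by its initial value via integration of Eq.~\eqref{eq:regularised_Brinkman_i} and Gronwall's lemma. Integrating in time over $[0,T]$ and dropping the nonnegative endpoint term yields
\begin{equation*}
    \epsilon \int_0^T \norm{\nabla \underline{n}\epsnun}_{L^2(\Rd)}^2 \dx t \leq \tfrac{1}{2}\norm{\underline{n}_{\epsilon,N}^{\mathrm{in}}}_{L^2(\Rd)}^2 + CT,
\end{equation*}
which is the desired bound, since the initial datum is controlled in $L^2$ by interpolation between its uniform $L^1$ and $L^\infty$ bounds. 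The principal obstacle is precisely the apparent $\nu^{-1}$ singularity in the Brinkman term, and the Young/convexity cancellation described above is exactly what renders the resulting estimate uniform in $\epsilon$, $\nu$, and $N$ simultaneously.
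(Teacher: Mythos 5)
Your proof is correct and takes essentially the same route as the paper: test against $\underline{n}\epsnun$, use Brinkman's law together with H\"older and Young's convolution inequality (with $\norm{K_\nu}_{L^1}=1$) to see that the potentially singular $\nu^{-1}$ term is in fact non-positive, then absorb the reaction term and integrate in time. The only (cosmetic) difference is that you bound the reaction term by $C\norm{\underline{n}\epsnun}_{L^1}$ and use the uniform $L^1$ control directly, whereas the paper bounds it by $C\norm{\underline{n}\epsnun}_{L^2}^2$ and closes via Gronwall; both are fine.
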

\begin{proof}
	Testing Eq.~\eqref{eq:regularised-equation} by the solution and integrating by parts, we obtain
	\begin{equation}\label{eq:AuxiliaryGradient}
    \begin{aligned}
		\frac12 \dfrac{\dx}{\dx t} & \int_\Rd
		\underline{n}\epsnun^2  \dx x + \epsilon \int_\Rd \abs{\grad
		\underline{n}\epsnun}^{2} \dx x                                                                                                                                                               \\
		                           & =  \frac12 \int_\Rd
		\underline{n}\epsnun^2 \Lap W\epsnun \dx x +  \int_{\Rd}^{} \underline{n}\epsnun
		\prt*{\int_{0}^{1} n\epsnun G_{N}(\underline{n}\epsnun;a) \dx{a}} \dx{x}                                                                                                                      \\
		& \leq \frac12 \int_\Rd
		\underline{n}\epsnun^2 \Lap W\epsnun \dx x + \norm{G}_{L^\infty([0,\bar n] \times [0,1])}\int_{\Rd} \underline{n}\epsnun^2 \dx x.
	\end{aligned}
    \end{equation}
    We now observe that the first term on the right-hand side has a sign:
    \begin{equation}\label{eq:nDeltaW-sign}
    \begin{aligned}
        \int_\Rd\underline{n}\epsnun^2 \Lap W\epsnun \dx x &= \frac{1}{\nu}\int_\Rd\underline{n}\epsnun^2 \prt*{W\epsnun-\underline{n}\epsnun} \dx x \\
        &\leq  \frac{1}{\nu}\norm{\underline{n}\epsnun}_{L^3(\Rd)}^2\norm{W\epsnun}_{L^3(\Rd)} - \frac{1}{\nu}\int_\Rd\underline{n}\epsnun^3 \dx x\\
        &\leq \frac{1}{\nu}\norm{\underline{n}\epsnun}_{L^3(\Rd)}^3\norm{K_\nu}_{L^1(\Rd)} - \frac{1}{\nu}\int_\Rd\underline{n}\epsnun^3 \dx x\\
        &\leq 0,
    \end{aligned}
    \end{equation}
    recalling that $\norm{K_\nu}_{L^1(\Rd)}=1$.
	Using this computation in~\eqref{eq:AuxiliaryGradient} and applying Gronwall's lemma yields the statement.
\end{proof}

\begin{proposition}
	\label{lem:n_gradW2eps}
	Let $ \left(\underline{n}\epsnun \right)_{\epsilon > 0} $ be the family of solutions to Eq.~\eqref{eq:regularised-equation}. Then,
	\begin{equation*}
		\begin{aligned}
			\int_{0}^{T}\!\!\! \int_{\Rd}^{}  \underline n\epsnun \abs*{\grad W\epsnun}^{2} \dx{x}\dx{t}
			\leq C,
		\end{aligned}
	\end{equation*}
	where $C$ is independent of $ \epsilon $, $\nu$, and $N$.
\end{proposition}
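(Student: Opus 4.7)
The plan is to test Eq.~\eqref{eq:regularised-equation} with the potential $W\epsnun$ itself. After one integration by parts on the drift, the target quantity $\int_0^T\!\!\int_\Rd \underline{n}\epsnun |\nabla W\epsnun|^2 \dx x \dx t$ appears directly, and the remaining contributions are a time-derivative term, an $\epsilon$-viscous term, and a source term, each of which needs a bound uniform in $\epsilon$, $\nu$, and $N$.

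For the time-derivative term, the convolution representation $W\epsnun = K_\nu \star \underline{n}\epsnun$ together with the symmetry of the kernel yields $\int W\epsnun \partial_t \underline{n}\epsnun \dx x = \int \underline{n}\epsnun \partial_t W\epsnun \dx x$, hence $\int W\epsnun \partial_t \underline{n}\epsnun \dx x = \tfrac{1}{2}\ddt \int W\epsnun \underline{n}\epsnun \dx x$. Inserting Brinkman's law identifies the natural energy $\int W\epsnun \underline{n}\epsnun \dx x = \|W\epsnun\|_{L^2}^2 + \nu\|\nabla W\epsnun\|_{L^2}^2$, so time-integration yields this energy at $T$ on the left-hand side (with the favourable sign) and at $t=0$ on the right-hand side. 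Testing Brinkman's equation at $t=0$ by $W\epsnun(\cdot,0)$ then controls the initial energy uniformly by $\|\underline{n}^{\mathrm{in}}\epsnun\|_{L^2(\Rd)}^2$, which is in turn bounded thanks to the uniform $L^1\cap L^\infty$-control of the initial data.

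The $\epsilon$-viscous term is the main subtlety. Two integrations by parts produce $\epsilon \int W\epsnun \Lap \underline{n}\epsnun \dx x = \tfrac{\epsilon}{\nu}\int \underline{n}\epsnun\prt*{W\epsnun - \underline{n}\epsnun}\dx x$, which at first sight threatens to diverge as $\nu \to 0$. The key observation---already exploited in~\eqref{eq:nDeltaW-sign}---is that this term has the favourable sign: since $W\epsnun = K_\nu \star \underline{n}\epsnun$ with $\|K_\nu\|_{L^1(\Rd)}=1$, Young's convolution inequality gives $\int \underline{n}\epsnun W\epsnun \dx x \leq \|\underline{n}\epsnun\|_{L^2(\Rd)}^2$, so the whole contribution is nonpositive and can simply be dropped. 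The source term $\int_0^T\!\!\int W\epsnun \int_0^1 n\epsnun G_N(\underline{n}\epsnun;a)\dx a\dx x\dx t$ is then controlled routinely using the uniform $L^\infty$-bound on $\underline{n}\epsnun$, the estimate $\|W\epsnun\|_{L^2}\leq \|\underline{n}\epsnun\|_{L^2}$, and $\|G\|_{L^\infty([0,\bar n]\times[0,1])}$.

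I expect the sole nontrivial point to be precisely this $\epsilon/\nu$ issue: a naive Cauchy--Schwarz estimate of $\epsilon\int\nabla\underline{n}\epsnun\cdot\nabla W\epsnun$ would require $\nu$-dependent constants (indeed $\sqrt{\nu}\|\nabla W\|_{L^2}$ is the natural quantity), so uniformity in $\nu$ hinges essentially on the convolution-based sign argument. All other steps are routine in view of the uniform $L^1\cap L^\infty$-control of $\underline{n}\epsnun$ established in the preceding propositions.
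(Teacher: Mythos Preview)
Your proposal is correct and follows essentially the same route as the paper: both test Eq.~\eqref{eq:regularised-equation} with $W\epsnun$, use the convolution symmetry to produce $\tfrac12\ddt\int W\epsnun\underline{n}\epsnun$, and---crucially---dispose of the $\epsilon$-term via the sign argument $\int \underline{n}\epsnun\Delta W\epsnun = \tfrac{1}{\nu}\int \underline{n}\epsnun(W\epsnun-\underline{n}\epsnun)\leq 0$, exactly as in~\eqref{eq:nDeltaW-sign}. The only cosmetic difference is that the paper bounds the source term by $\|G\|_{L^\infty}\int W\epsnun\underline{n}\epsnun$ and closes by Gronwall, whereas you time-integrate and bound the source directly via $\int W\epsnun\underline{n}\epsnun\leq\|\underline{n}\epsnun\|_{L^2}^2$; both are equivalent here.
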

\begin{proof}
	Multiplying Eq.~\eqref{eq:regularised-equation} by $W\epsnun$ and integrating in space, we obtain
	\begin{equation}
		\begin{aligned}
			\label{eq:2ndMoment_intermediate}
			\frac12  \ddt \int_\Rd W\epsnun\, \underline{n}\epsnun \dx x
			 & = \epsilon \int_\Rd \underline{n}\epsnun\Delta W\epsnun \dx x
			-\int_\Rd \underline{n}\epsnun \abs*{\nabla W\epsnun}^2 \dx x                                              \\
			 & \quad + \int_\Rd W\epsnun\left[ \int_{0}^{1} n\epsnun G_{N}(\underline{n}\epsnun;a)\dx{a}\right] \dx x.
		\end{aligned}
	\end{equation}
	For the reaction term, we have
	\begin{equation*}
		\int_\Rd W\epsnun \left[ \int_{0}^{1} n\epsnun G_{N}(\underline{n}\epsnun;a)\dx{a}\right] \dx x \leq \norm{G}_{L^\infty([0,\bar n] \times [0,1])}\int_{\Rd}^{} W\epsnun\, \underline{n}\epsnun \dx{x},
	\end{equation*}
    while proceeding similarly as in~\eqref{eq:nDeltaW-sign}, we see that
    \begin{equation*}
        \epsilon \int_\Rd \underline{n}\epsnun\Delta W\epsnun \dx x \leq 0.
    \end{equation*}
	Thus, we obtain
	\begin{equation*}
			\frac12  \ddt \int_\Rd W\epsnun\, \underline{n}\epsnun \dx x + \int_\Rd   \underline{n}\epsnun \abs*{\nabla W\epsnun}^{2}  \dx x                                                         
			 \leq C \int_{\Rd}^{} W\epsnun \underline{n}\epsnun \dx{x}.
	\end{equation*}
	The result follows from Gronwall's lemma.
\end{proof}

\begin{proposition}
\label{prop:TimeDerivative}
    Let $(\underline{n}\epsnun)_{\epsilon>0}$ be the family of solutions to Eq.~\eqref{eq:regularised-equation}. Then, there holds
    \begin{equation*}
        \partial_t \underline{n}\epsnun \in L^2(0,T;H^{-1}(\Rd)),
    \end{equation*}
    uniformly in $\epsilon$, $\nu$, and $N$.
\end{proposition}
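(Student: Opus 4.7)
My plan is to test the equation~\eqref{eq:regularised-equation} against an arbitrary $\varphi\in H^1(\Rd)$ with $\|\varphi\|_{H^1}\leq 1$ and to bound each of the three right-hand-side contributions separately, in such a way that the resulting estimate is in $L^2(0,T)$ uniformly in $\epsilon$, $\nu$, and $N$. Since $H^1(\Rd)\subset L^2(\Rd)$ with constant one, by duality the $L^2\hookrightarrow H^{-1}$ embedding holds, which I will use for the zeroth-order term; for the divergence-form terms I will integrate by parts and use Cauchy--Schwarz.

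For the artificial viscosity term $\epsilon\Delta\underline{n}\epsnun$, integration by parts gives the pointwise (in time) estimate $\|\epsilon\Delta\underline{n}\epsnun\|_{H^{-1}}\leq\epsilon\|\nabla\underline{n}\epsnun\|_{L^2(\Rd)}$. Squaring and integrating in time, Proposition~\ref{prop:regular_n_bounds} yields a bound of order $\epsilon$, which is uniform for $\epsilon\in(0,1]$. For the transport term $\nabla\cdot(\underline{n}\epsnun\nabla W\epsnun)$, duality gives $\|\nabla\cdot(\underline{n}\epsnun\nabla W\epsnun)\|_{H^{-1}}\leq\|\underline{n}\epsnun\nabla W\epsnun\|_{L^2(\Rd)}$, and using the uniform $L^\infty$ bound $\underline{n}\epsnun\leq\bar n$,
\begin{equation*}
\int_0^T\!\int_\Rd\underline{n}\epsnun^2\,|\nabla W\epsnun|^2\dx{x}\dx{t}\leq\bar n\int_0^T\!\int_\Rd\underline{n}\epsnun\,|\nabla W\epsnun|^2\dx{x}\dx{t},
\end{equation*}
which is controlled by Proposition~\ref{lem:n_gradW2eps}. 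Finally, for the reaction term, the bounds $0\leq n\epsnun\leq C$ together with the growth bound $|G|\leq\|G\|_{L^\infty([0,\bar n]\times[0,1])}$ and the uniform $L^1$-control on $n\epsnun(\cdot,\cdot;a)$ give a uniform $L^\infty(0,T;L^1\cap L^\infty(\Rd))\hookrightarrow L^\infty(0,T;L^2(\Rd))$ bound on $\int_0^1 n\epsnun G_N(\underline{n}\epsnun;a)\dx a$, which embeds into $L^2(0,T;H^{-1}(\Rd))$.

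Adding the three contributions produces the claimed uniform bound on $\partial_t\underline{n}\epsnun$ in $L^2(0,T;H^{-1}(\Rd))$. No step here is expected to be a genuine obstacle: all necessary a priori bounds have been established in the preceding propositions and the $L^\infty$-control on the total density. The only subtlety is to remember that the gradient control on $\underline{n}\epsnun$ is only available in the form $\sqrt{\epsilon}\|\nabla\underline{n}\epsnun\|_{L^2_{t,x}}\leq C$, but because the viscosity term carries a full factor of $\epsilon$, this is exactly enough to get a bound of order $\sqrt{\epsilon}$ for that contribution, which is harmless in the limit.
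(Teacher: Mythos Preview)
Your proposal is correct and follows essentially the same approach as the paper: test the equation against an $H^1$ function, integrate by parts in the divergence-form terms, and bound the three contributions using Proposition~\ref{prop:regular_n_bounds} for the artificial viscosity, Proposition~\ref{lem:n_gradW2eps} together with $\underline{n}\epsnun\leq\bar n$ for the transport term, and the uniform $L^1\cap L^\infty$ bounds for the reaction. The paper phrases the duality with a space--time test function $\varphi\in L^2(0,T;H^1(\Rd))$ rather than pointwise in time, but the estimates are identical.
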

\begin{proof}
    For a test function $\varphi \in L^2(0,T;H^1(\Rd))$, we have
    \begin{align*}
        \abs*{\int_0^T\!\!\!\int_{\Rd} \partial_t \underline{n}\epsnun \varphi \dx x\dx t} &\leq \epsilon \int_0^T\!\!\!\int_{\Rd} |\nabla \underline{n}\epsnun||\nabla\phi|\dx x \dx t + \int_0^T\!\!\!\int_{\Rd} \underline{n}\epsnun|\nabla W\epsnun||\nabla\phi|\dx x \dx t\\
        &\quad+ \norm{G}_{L^\infty([0,\bar n] \times [0,1])}\int_0^T\!\!\!\int_{\Rd} \underline{n}\epsnun|\phi|\dx x \dx t\\
        &\leq C\norm{\nabla\varphi}_{L^2(0,T;L^2(\Rd))} + C\norm{\varphi}_{L^2(0,T;L^2(\Rd))},
    \end{align*}
    where the constants are independent of any parameters by Propositions~\ref{lem:n_gradW2eps} and~\ref{prop:regular_n_bounds}.
\end{proof}

\begin{proposition}
	\label{lem:second_moment}
	Let $\underline{n}\epsnun$ be the solution to Eq.~\eqref{eq:regularised-equation}. Assume that there is a constant $C>0$ such that
	\begin{align*}
		\sup_{\epsilon >0} \int_\Rd \underline{n}_{\epsilon, N}^{\mathrm{in}}|x|^2 \dx x \leq C.
	\end{align*}
	Then, the second moment remains bounded and there holds
	\begin{align*}
		\sup_{t\in [0,T]}\int_\Rd \underline{n}\epsnun(x,t) \abs{x}^{2}  \dx x\leq C,
	\end{align*}
	for some constant independent of $\epsilon$, $\nu$, and $N$.
\end{proposition}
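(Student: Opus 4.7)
The plan is to derive a differential inequality for the second moment
\[
M(t) := \int_\Rd \underline{n}\epsnun(x,t) |x|^2 \dx x,
\]
and then close it via Gronwall, using the uniform bounds already established, in particular Proposition~\ref{lem:n_gradW2eps}. Since $|x|^2$ is not an admissible test function, I would first work with a smooth cutoff $\varphi_R(x) := |x|^2 \chi_R(x)$, where $\chi_R$ is a standard radial cutoff supported in $B_{2R}$ and equal to $1$ on $B_R$, with $|\nabla \varphi_R| \lesssim |x|$ and $|\Delta \varphi_R| \lesssim 1$ uniformly in $R$, and then pass to the limit $R\to\infty$ by monotone convergence at the end.

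Testing Eq.~\eqref{eq:regularised-equation} against $\varphi_R$ and integrating by parts gives, schematically,
\[
\ddt \int_\Rd \underline{n}\epsnun \varphi_R \dx x
= \epsilon \int_\Rd \underline{n}\epsnun \Lap \varphi_R \dx x
- \int_\Rd \underline{n}\epsnun \nabla W\epsnun \cdot \nabla \varphi_R \dx x
+ \int_\Rd \varphi_R \int_0^1 n\epsnun G_N(\underline{n}\epsnun;a) \dx a \dx x.
\]
The diffusion term is controlled by $C\epsilon \|\underline{n}\epsnun\|_{L^1(\Rd)}$, which is uniformly bounded thanks to the $L^1$-estimate obtained by Gronwall from the equation on $n\i\epsnun$. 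The reaction term is bounded by $\|G\|_{L^\infty([0,\bar n]\times[0,1])} \int \underline{n}\epsnun \varphi_R \dx x$. The convective term is the key one; using $|\nabla \varphi_R| \lesssim |x|$ and Cauchy--Schwarz,
\[
\left| \int_\Rd \underline{n}\epsnun \nabla W\epsnun \cdot \nabla \varphi_R \dx x \right|
\leq 2 \left( \int_\Rd \underline{n}\epsnun |x|^2 \dx x \right)^{1/2}\left( \int_\Rd \underline{n}\epsnun |\nabla W\epsnun|^2 \dx x \right)^{1/2},
\]
i.e.\ bounded by $2 M_R(t)^{1/2} f(t)^{1/2}$, where $M_R(t) := \int \underline{n}\epsnun \varphi_R \dx x$ and $f(t) := \int \underline{n}\epsnun|\nabla W\epsnun|^2 \dx x$. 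By Proposition~\ref{lem:n_gradW2eps}, $f \in L^1(0,T)$ uniformly in $\epsilon$, $\nu$, $N$.

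Applying Young's inequality, $2 M_R^{1/2} f^{1/2} \leq M_R + f$, we arrive at
\[
\ddt M_R(t) \leq C_1 + C_2 M_R(t) + f(t),
\]
with constants independent of $\epsilon$, $\nu$, $N$, and $R$. Gronwall's lemma then yields
\[
M_R(T) \leq e^{C_2 T}\Bigl( M_R(0) + C_1 T + \|f\|_{L^1(0,T)}\Bigr),
\]
and since $M_R(0) \leq \int \underline{n}_{\epsilon,N}^{\mathrm{in}} |x|^2 \dx x$ is uniformly bounded by~\eqref{eq: 2nd moment data}, the right-hand side is uniform in all parameters and in $R$. Monotone convergence as $R\to\infty$ upgrades the bound to $M(T)$, giving the claim for every $T$ (and hence the sup in $t$ by running the argument on any $t \in [0,T]$).

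The only real obstacle is the non-admissibility of $|x|^2$ as a test function, which is entirely handled by the cutoff $\varphi_R$ together with the uniform gradient bound $|\nabla \varphi_R| \lesssim |x|$; everything else is a standard moment estimate combined with the dissipation estimate of Proposition~\ref{lem:n_gradW2eps}.
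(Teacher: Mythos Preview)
Your proof is correct and follows the same route as the paper: test against $|x|^2$, split the convective term via Young's inequality into the second moment and the dissipation $\int\underline{n}\epsnun|\nabla W\epsnun|^2$ (controlled by Proposition~\ref{lem:n_gradW2eps}), and close via Gronwall; the paper in fact computes formally without any cutoff. One small slip in your cutoff argument: your Cauchy--Schwarz display yields $(\int\underline{n}\epsnun|x|^2)^{1/2}$ rather than $M_R(t)^{1/2}$, and with $\varphi_R=|x|^2\chi_R$ one does not have $|\nabla\varphi_R|^2\lesssim\varphi_R$ near the edge of $\supp\chi_R$ --- replacing $\chi_R$ by $\chi_R^2$ in the definition of $\varphi_R$ gives $|\nabla\varphi_R|^2\lesssim\varphi_R$ and closes the Gronwall loop cleanly.
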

\begin{proof}
	We compute
	\begin{equation*}
		\begin{aligned}
			\frac{1}{2}  \ddt \int_\Rd \underline{n}\epsnun \abs{x}^{2} \dx x
			 & =  \epsilon d \int_\Rd \underline{n}\epsnun \dx x - \int_\Rd \underline{n}\epsnun x \cdot \nabla W\epsnun \dx x \\
			 & \quad +  \frac{1}{2} \int_\Rd \abs{x}^{2}  \prt*{\int_{0}^{1} n\epsnun  G_{N}(\underline{n}\epsnun;a)\dx{a}} \dx x \\[0.0em]
			 & \leq \epsilon d \norm{\underline{n}\epsnun}_{L^\infty(0,T; L^1(\Rd))} \\
			 & \quad+\frac12 \prt*{ 1 +  \sup_{a\in [0,1]}\norm{G}_{L^\infty([0,\bar n]\times[0,1])}}\int_\Rd \underline{n}\epsnun|x|^2\dx x \\
			 & \quad + \frac12 \int_\Rd \underline{n}\epsnun |\nabla W\epsnun|^2 \dx x                                                             \\[0.3em]
			 & \leq C + C \int_\Rd  \underline{n}\epsnun \abs{x}^{2} \dx x,
		\end{aligned}
	\end{equation*}
	and, again, we conclude by Gronwall's lemma.
\end{proof}

Let us point out that the above proposition implies also uniform second-moment control for $n\epsnun(x,t;a)$ for all phenotypes $a\in [0,1]$.

\begin{lemma}[Entropy Bounds]
	\label{lem:entropy-bounds}
	Let $\underline{n}\epsnun$ be the solution to Eq. \eqref{eq:regularised-equation}. Then, there holds
	\begin{align*}
		\sup_{t\in[0,T]} \int_\Rd \underline{n}\epsnun (x,t) \abs{\log \underline{n}\epsnun (x,t)} \dx x \leq C,
	\end{align*}
	for some constant independent of $\epsilon$, $\nu$, and $N$.
\end{lemma}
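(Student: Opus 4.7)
The plan is to decompose the integrand as $\underline{n}\epsnun|\log \underline{n}\epsnun| = \underline{n}\epsnun(\log \underline{n}\epsnun)_+ + \underline{n}\epsnun(\log \underline{n}\epsnun)_-$ and treat the two pieces separately, leveraging only the uniform bounds already at hand: the $L^\infty$ control $\underline{n}\epsnun \leq \bar{n}$, the uniform $L^1$ bound (obtained by integrating Eq.~\eqref{eq:regularised-equation} and applying Gronwall's lemma), and the uniform second-moment bound from Proposition \ref{lem:second_moment}. No new differential inequality or time-integration is needed at this stage; this is an a priori estimate pointwise in $t$.

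For the positive part, I would simply observe that $\underline{n}\epsnun \leq \bar{n}$ a.e.\ forces $(\log \underline{n}\epsnun)_+ \leq (\log \bar{n})_+$, so
\[
\int_{\Rd} \underline{n}\epsnun (\log \underline{n}\epsnun)_+ \dx x \leq (\log \bar{n})_+ \, \|\underline{n}\epsnun(t)\|_{L^1(\Rd)} \leq C,
\]
uniformly in $\epsilon$, $\nu$, $N$, by the $L^1$ bound.

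For the negative part, the key idea is to compare $\underline{n}\epsnun$ against a Gaussian reference density $M(x) = (2\pi)^{-d/2} e^{-|x|^2/2}$. The Gibbs-type inequality $s\log s \geq s - 1$ with $s = \underline{n}\epsnun/M$ yields the pointwise bound
\[
\underline{n}\epsnun \log \underline{n}\epsnun \geq \underline{n}\epsnun \log M + \underline{n}\epsnun - M = -\tfrac{1}{2}|x|^2 \underline{n}\epsnun - \tfrac{d}{2}\log(2\pi)\, \underline{n}\epsnun + \underline{n}\epsnun - M.
\]
Integrating in $x$, rearranging, and invoking Proposition \ref{lem:second_moment} together with the uniform $L^1$ bound and $\int M = 1$ produces a uniform lower bound $-\int \underline{n}\epsnun \log \underline{n}\epsnun \dx x \leq C$. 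Since $\int \underline{n}\epsnun (\log \underline{n}\epsnun)_- = \int \underline{n}\epsnun (\log \underline{n}\epsnun)_+ - \int \underline{n}\epsnun \log \underline{n}\epsnun$, combining with the positive-part estimate closes the argument.

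The main obstacle is, in fact, very mild: the only nontrivial ingredient is controlling the tails of $-\log \underline{n}\epsnun$ in the region where $\underline{n}\epsnun$ is small, which is precisely what the Gaussian comparison does provided the second moment is finite uniformly. That uniform second-moment control is the genuinely substantive input, and it is already supplied by Proposition \ref{lem:second_moment}; once it is in place, the entropy bound essentially reduces to a Csisz\'ar--Kullback-type manipulation.
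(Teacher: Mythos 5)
Your proposal is correct and follows essentially the same strategy as the paper: split off the positive part using the uniform $L^\infty$ and $L^1$ bounds, then control the negative part by comparing against a Gaussian reference density and invoking the uniform second-moment bound from Proposition~\ref{lem:second_moment}. The only cosmetic difference is that you close the negative-part estimate via the elementary Gibbs inequality $s\log s \geq s-1$, whereas the paper applies Jensen's inequality to the convex map $s\mapsto s\log s$; both yield the same uniform lower bound on the relative entropy.
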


\begin{proof}
	Let us consider
	\begin{align*}
		\int_\Rd \underline{n}\epsnun \abs{\log \underline{n}\epsnun} \dx x
		 & = \int_{\set{\underline{n}\epsnun \geq 1}} \underline{n}\epsnun \log \underline{n}\epsnun \dx x - \int_{\set{\underline{n}\epsnun < 1}} \underline{n}\epsnun \log \underline{n}\epsnun \dx x \\[0.5em]
		 & \leq \norm{\underline{n}\epsnun}_{L^\infty(0,T; L^\infty(\Rd))} \norm{\underline{n}\epsnun}_{L^\infty(0,T;L^1(\Rd))}  + J,
	\end{align*}
	where
	\begin{align*}
		J \coloneqq - \int_{\set{\underline{n}\epsnun < 1}} \underline{n}\epsnun \log \underline{n}\epsnun \dx x.
	\end{align*}
	In order to estimate $J$, let $\mathcal{N}(x)$ denote the standard normal Gaussian. Then, we have
	\begin{align*}
		J & = - \int_{\set{\underline{n}\epsnun < 1}} \underline{n}\epsnun \log \underline{n}\epsnun \dx{x}                                                                                                                                                                                                                               \\
		  & = - \int_\Rd \frac{\underline{n}\epsnun \indicator_{\set{\underline{n}\epsnun < 1}}}{\mathcal{N}(x)} \log\prt*{\frac{\underline{n}\epsnun \indicator_{\set{\underline{n}\epsnun < 1}} }{\mathcal{N}(x)}} \mathcal{N}(x) \dx x  + \int_\Rd \underline{n}\epsnun \indicator_{\set{\underline{n}\epsnun < 1}} \abs*{x}^{2} \dx x \\
		  & \leq - \int_\Rd \frac{\underline{n}\epsnun \indicator_{\set{\underline{n}\epsnun < 1}}}{\mathcal{N}(x)} \log\prt*{\frac{\underline{n}\epsnun \indicator_{\set{\underline{n}\epsnun < 1}} }{\mathcal{N}(x)}} \mathcal{N}(x)  \dx x + C,
	\end{align*}
	having used the second-order moment bound from Proposition~\ref{lem:second_moment}. Applying Jensen's inequality to the first term, we observe
	\begin{align*}
		- \int_\Rd
		 & \frac{\underline{n}\epsnun \indicator_{\set{\underline{n}\epsnun < 1}}}{\mathcal{N}(x)} \log\prt*{\frac{\underline{n}\epsnun \indicator_{\set{\underline{n}\epsnun < 1}} }{\mathcal{N}(x)}} \mathcal{N}(x) \dx x                                                        \\
		 & \leq - \prt*{\int_\Rd \frac{\underline{n}\epsnun \indicator_{\set{\underline{n}\epsnun < 1}} }{\mathcal{N}(x)}(x) \mathcal{N}(x) \dx x} \log \prt*{\int_\Rd \frac{\underline{n}\epsnun \indicator_{\set{\underline{n}\epsnun < 1}} }{\mathcal{N}(x)} \mathcal{N} \dx x} \\
		 & \leq e^{-1},
	\end{align*}
	as $s\mapsto s \log(s) $ is convex. In conclusion, we obtain $J \leq C + e^{-1}$, whence
	\begin{equation*}
		\int_\Rd \underline{n}\epsnun \abs{\log \underline{n}\epsnun} \dx x \leq \norm{\underline{n}\epsnun}_{L^\infty(0,T; L^\infty(\Rd))} \norm{\underline{n}\epsnun}_{L^\infty(0,T;L^1(\Rd))}  + C,
	\end{equation*}
	which concludes the proof.
\end{proof}

\subsection{Compactness of solutions to the regularised equation}

Let $(\calK_h)_{0<h<1}$ be a family of nonnegative functions such that
\begin{align*}
	\supp{\calK_h} &\subset B_2(0), \quad \calK_h \in C^\infty(\R^d\setminus B_1(0)), \\
	\calK_h(x) & = \frac{1}{(|x|^2+h^2)^{d/2}}\quad \text{for $|x|\leq 1$}.
\end{align*}
The following lemma was proved in~\cite[Lemma~3.1]{BelgacemJabin}.
\begin{lemma}[Compactness criterion]
	\label{lem:CompactnessCriterion}
	Let $(u_k)$ be a bounded sequence in $L^p(\R^d\times(0,T))$ for some $1\leq p<\infty$. If $(\partial_t u_k)$ is uniformly bounded in $L^r(0,T;W^{-1,r})$ for some $r\geq 1$, and if
	\begin{equation*}
		\lim_{h\to0}\,\,\limsup_{k\to\infty}\,|\log h|^{-1}\,\int_0^T\int_{\R^{2d}} \calK_h(x-y)\abs*{u_k(x,t)-u_k(y,t)}^p\dx x\dx y\dx t = 0,
	\end{equation*}
	then $(u_k)$ is compact in $L^p_{\mathrm{loc}}(\R^d\times(0,T))$. Conversely, if $(u_k)$ is globally compact in $L^p$, then the above limit holds.
\end{lemma}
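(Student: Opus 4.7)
The plan is to deduce the forward implication from the classical Kolmogorov--Riesz--Fréchet criterion, separately handling time shifts via the bound on $\partial_t u_k$ and space shifts via the logarithmic kernel hypothesis. For the time direction, I would combine the $L^p$-bound on $u_k$ with the uniform estimate on $\partial_t u_k$ in $L^r(0,T;W^{-1,r})$ through a short mollification in space---an Aubin--Lions style interpolation---to obtain a uniform modulus of continuity
\[
\int_0^{T-\tau}\!\!\!\int_{\R^d}\chi(x)\abs*{u_k(x,t+\tau)-u_k(x,t)}^p\dx x\dx t\leq C\,\omega(\tau),
\]
for any cutoff $\chi\in C_c^\infty(\R^d)$, with $\omega(\tau)\to 0$ as $\tau\to 0^+$ uniformly in $k$.

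The heart of the argument is the space-translation estimate. I would compare $u_k$ with its mollification $u_k\star\rho_\delta$ and reduce, via Jensen and Fubini, the bound on $\norm{u_k-u_k\star\rho_\delta}_{L^p(\R^d\times(0,T))}^p$ to the symmetric double integral
\[
\int_0^T\!\!\!\int_{\R^{2d}}\rho_\delta(x-y)\abs*{u_k(x,t)-u_k(y,t)}^p\dx x\dx y\dx t.
\]
The crucial property of the kernel is $\calK_h(z)\sim|z|^{-d}$ on the annulus $\{h\leq|z|\leq 1\}$, so that $\int\calK_h\sim|\log h|$ as $h\to 0$. Comparing $\rho_\delta$ with $|\log h|^{-1}\calK_h$ on dyadic annuli $\{2^{j}h\leq |z|\leq 2^{j+1}h\}$ and choosing $\delta$ as an appropriate function of $h$, one dominates the mollified difference by the kernel integral appearing in the hypothesis, up to a remainder that vanishes in the joint limit $h\to 0$, $k\to\infty$. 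Combined with time equicontinuity, Kolmogorov--Riesz then yields $L^p_{\mathrm{loc}}$-compactness.

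The principal obstacle is this dyadic comparison: the kernel is borderline non-integrable, and the normalization $|\log h|^{-1}$ must compensate exactly the divergence $\int\calK_h\sim|\log h|$, with the limits in $k$ and $h$ taken in the prescribed order (inner $\limsup_{k\to\infty}$, outer $h\to 0$). For the converse, assuming $u_k\to u$ in $L^p$, I would split $\abs*{u_k(x,t)-u_k(y,t)}^p\leq C(\abs*{(u_k-u)(x,t)}^p+\abs*{(u_k-u)(y,t)}^p+\abs*{u(x,t)-u(y,t)}^p)$. The first two terms contribute at most $C\,|\log h|\,\norm{u_k-u}_{L^p}^p$, which tends to zero after multiplying by $|\log h|^{-1}$, while the remaining term is handled by the elementary Lebesgue-type fact that for any fixed $u\in L^p(\R^d\times(0,T))$,
\[
|\log h|^{-1}\int_0^T\!\!\!\int_{\R^{2d}}\calK_h(x-y)\abs*{u(x,t)-u(y,t)}^p\dx x\dx y\dx t\to 0\quad\text{as }h\to 0,
\]
which follows by density of smooth compactly supported functions and $L^p$-continuity of translations.
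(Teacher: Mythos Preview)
The paper does not supply its own proof of this lemma: it is quoted verbatim as \cite[Lemma~3.1]{BelgacemJabin} and used as a black box, so there is no in-paper argument to compare against. Your sketch is consistent with the strategy in Belgacem--Jabin: the normalised kernel $|\log h|^{-1}\calK_h$ behaves like an approximate identity, so the hypothesis controls $\norm{u_k-\bar\calK_h\star u_k}_{L^p}$ uniformly in $k$, and space-mollified $u_k$ together with the $L^r(0,T;W^{-1,r})$ bound on $\partial_t u_k$ yields time equicontinuity; the converse direction via the $u_k-u$ splitting is exactly right. One point to tighten is the time-shift estimate when $p\neq r$: mollifying in space turns $\partial_t u_k$ into $L^r(0,T;L^r)$ with a $\delta$-dependent constant, and you then need to interpolate against the $L^p$ bound on $u_k$ to obtain an $L^p$ modulus of continuity in time---this is routine but worth stating carefully, since the statement allows $r\geq 1$ and $1\leq p<\infty$ independently.
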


\begin{proposition}
	\label{prop:W_eps_Strong}
	The sequence $(W\epsnun)_{\epsilon>0}$ is compact in $L^1(0,T;L^1(\Rd))$. Consequently
	\begin{equation}
		\label{eq:Q_W_quantity}
		\lim_{h\to0}\,\limsup_{\epsilon\to0}\,|\log h|^{-1}\,\int_0^T\int_{\R^{2d}} \calK_h(x-y)\abs*{W\epsnun(x) - W\epsnun(y)}\dx x\dx y\dx t = 0.
	\end{equation}
\end{proposition}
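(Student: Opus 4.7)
The plan is to deduce compactness of $(W\epsnun)_{\epsilon>0}$ in $L^1(0,T;L^1(\Rd))$ by a standard Aubin--Lions-plus-tightness argument, and then to read off~\eqref{eq:Q_W_quantity} as the \emph{necessary} direction of Lemma~\ref{lem:CompactnessCriterion}. Since $\nu>0$ and $N$ are fixed as $\epsilon\to 0$, the Brinkman solution operator $(-\nu\Lap+I)^{-1}$ is a bounded isomorphism between the relevant Sobolev spaces, and I intend to exploit this systematically.

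For the spatial regularity, the uniform $L^1\cap L^\infty$ bounds on $\underline n\epsnun$ give $\underline n\epsnun$ uniformly bounded in $L^\infty(0,T;L^p(\Rd))$ for every $1\leq p\leq\infty$, so elliptic regularity applied to $-\nu\Lap W\epsnun + W\epsnun = \underline n\epsnun$ (already recorded in Lemma~\ref{lem:Existence}(iv)) yields $W\epsnun$ uniformly bounded in $L^\infty(0,T;H^2(\Rd))$. For the time derivative, $\partial_t W\epsnun = (-\nu\Lap+I)^{-1}\partial_t\underline n\epsnun$, and Proposition~\ref{prop:TimeDerivative} gives $\partial_t\underline n\epsnun \in L^2(0,T;H^{-1}(\Rd))$ uniformly, hence $\partial_t W\epsnun \in L^2(0,T;H^1(\Rd))$ uniformly in $\epsilon$. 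Aubin--Lions with the compact embedding $H^2(B_R) \hookrightarrow\hookrightarrow H^1(B_R)$ then yields, for every ball $B_R\subset\Rd$, compactness of $(W\epsnun)$ in $L^2(0,T;L^2(B_R))$ and, via the uniform $L^\infty$-bound and interpolation, in $L^1(0,T;L^1(B_R))$.

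To pass from local to global $L^1(L^1)$-compactness I need uniform tightness in $x$. Writing $W\epsnun = K_\nu\star\underline n\epsnun$ and using that the explicit formula for $K_\nu$ supplied in the paper exhibits exponential decay at infinity (hence finite second moment), convolution with the second-moment bound of Proposition~\ref{lem:second_moment} yields
\begin{equation*}
\sup_{\epsilon>0,\,t\in[0,T]}\int_{\Rd}|x|^2\,W\epsnun(x,t)\dx x \leq C,
\end{equation*}
so that by Markov's inequality $\norm{W\epsnun\indicator_{\{|x|>R\}}}_{L^1(0,T;L^1(\Rd))} \leq CT/R^2$, uniformly in $\epsilon$. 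Combining this tightness with the preceding local compactness produces compactness of $(W\epsnun)$ in $L^1(0,T;L^1(\Rd))$, and then~\eqref{eq:Q_W_quantity} follows from the converse direction of Lemma~\ref{lem:CompactnessCriterion} applied with $p=1$. The only mildly delicate point is the tightness step, where one needs to transfer the second-moment bound from $\underline n\epsnun$ to its convolution with $K_\nu$; this is elementary given the decay of $K_\nu$, but should be written out carefully since nothing else in the compactness machinery supplies decay at spatial infinity.
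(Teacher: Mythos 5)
Your argument is correct and follows the same overall strategy as the paper: uniform spatial Sobolev bounds from elliptic regularity of the Brinkman operator, a uniform bound on $\partial_t W\epsnun$ transferred from $\partial_t\underline n\epsnun$, Aubin--Lions for local compactness, a uniform second-moment bound for tightness (hence global $L^1(L^1)$-compactness), and finally the converse direction of Lemma~\ref{lem:CompactnessCriterion} to obtain Eq.~\eqref{eq:Q_W_quantity}. The only genuine divergence is the mechanism for the tightness step: you push the second-moment bound through the convolution $W\epsnun = K_\nu\star\underline n\epsnun$ using the decay (finite second moment) of $K_\nu$, whereas the paper instead tests the Brinkman equation directly with $\tfrac12|x|^2$ and integrates by parts to obtain the identity $-\nu d\int_{\Rd} W\epsnun\,\dx x + \tfrac12\int_{\Rd}|x|^2 W\epsnun\,\dx x = \tfrac12\int_{\Rd}|x|^2\underline n\epsnun\,\dx x$, from which the second-moment bound on $W\epsnun$ is immediate given Proposition~\ref{lem:second_moment}. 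Your convolution route requires verifying $\int|x|^2 K_\nu(x)\,\dx x<\infty$ (which does hold from the explicit formula), while the paper's computation sidesteps any explicit property of $K_\nu$ beyond the equation; both are correct, the paper's being marginally more self-contained. The choice of Aubin--Lions triple also differs in detail ($L^\infty(H^2)$ with $\partial_t W\epsnun\in L^2(H^1)$ in your version versus $L^\infty(W^{1,q})$ with $\partial_t W\epsnun\in L^2(H^{-1})$ in the paper's), but this is immaterial.
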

\begin{proof}
	From the Brinkman equation we know that $W\epsnun$ is uniformly bounded in any $L^\infty(0,T;W^{1,q}(\Rd))$ for $q\in[1,\infty]$.
	Moreover, since $\partial_t W\epsnun = K_\nu \star \partial_t \underline{n}\epsnun$, we have $\partial_t W\epsnun \in L^2(0,T;H^{-1}(\R^d))$ uniformly, using Proposition~\ref{prop:TimeDerivative}. Hence, by the Aubin-Lions lemma, $W\epsnun$ is compact in $L^1(0,T;L^1_{\mathrm{loc}}(\Rd))$. To obtain global compactness we argue that the sequence is equi-tight. Indeed, testing the Brinkman equation with $\frac12|x|^2$ and integrating by parts, we see
	\begin{equation*}
		-\nu d \int_{\Rd} W\epsnun\dx x + \frac12\int_{\Rd} |x|^2 W\epsnun \dx x = \frac12 \int_{\Rd} |x|^2 \underline{n}\epsnun \dx x.
	\end{equation*}
	Using Proposition~\ref{lem:second_moment}, we deduce that $W\epsnun$ has a uniformly finite second moment, implying global compactness. Finally, Eq.~\eqref{eq:Q_W_quantity} holds by Lemma~\ref{lem:CompactnessCriterion}.
\end{proof}

\begin{proposition}
	\label{prop:n_epsnu_compact}
	The family $(\underline{n}\epsnun)_{\epsilon > 0}$ is compact in $L^1(0,T;L^{1}(\Rd))$.
\end{proposition}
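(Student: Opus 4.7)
The plan is to invoke the compactness criterion of Lemma~\ref{lem:CompactnessCriterion} with $p=1$. The $L^1(0,T;L^1(\Rd))$ boundedness of $(\underline{n}\epsnun)_{\epsilon>0}$ is immediate from the uniform $L^\infty(0,T;L^1(\Rd))$ estimate, while the $L^2(0,T;H^{-1}(\Rd))$ bound on $\partial_t \underline{n}\epsnun$ is provided by Proposition~\ref{prop:TimeDerivative}. Global equi-tightness (so that local compactness upgrades to global compactness on $L^1$) follows from the uniform second-moment control of Proposition~\ref{lem:second_moment}. Consequently, everything reduces to verifying the logarithmic modulus of continuity
\begin{equation*}
\lim_{h\to 0}\,\limsup_{\epsilon\to 0}\,|\log h|^{-1}\int_0^T\!\!\!\int_{\R^{2d}} \calK_h(x-y)\,\abs*{\underline{n}\epsnun(x,t) - \underline{n}\epsnun(y,t)} \dx x \dx y \dx t = 0.
\end{equation*}

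To this end, I would adopt the Bresch--Jabin approach: for a smooth convex regularisation $\chi_\delta$ of $|\cdot|$, introduce
\begin{equation*}
Q_h(t) \coloneqq \int_{\R^{2d}} \calK_h(x-y)\,\chi_\delta\bigl(\underline{n}\epsnun(x,t) - \underline{n}\epsnun(y,t)\bigr) \dx x \dx y,
\end{equation*}
and compute $\tfrac{d}{dt} Q_h$ using Eq.~\eqref{eq:regularised-equation}. The $\epsilon$-diffusion produces a nonpositive contribution by convexity of $\chi_\delta$ and nonnegativity of $\calK_h$, and can be discarded. The reaction is bounded by $C\,Q_h$ since $\|G\|_{L^\infty([0,\bar n]\times[0,1])}<\infty$. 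The transport term $\div(\underline{n}\epsnun \grad W\epsnun)$, after symmetrising in $x,y$ and integration by parts, splits into a term featuring $\Lap W\epsnun = \nu^{-1}(W\epsnun - \underline{n}\epsnun)$, which is uniformly bounded in $L^\infty$ and contributes $\leq C\,Q_h$, and an antisymmetric commutator of the schematic form $\int\int \nabla_x\calK_h(x-y)\cdot(\grad W\epsnun(x)-\grad W\epsnun(y))\,\chi_\delta'(\underline{n}\epsnun(x)-\underline{n}\epsnun(y))\,\underline{n}\epsnun(x)\dx x\dx y$.

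The commutator is handled via the bound $|\nabla\calK_h(z)|\lesssim \calK_h(z)/|z|$ together with the pointwise Morrey-type inequality $|\grad W\epsnun(x)-\grad W\epsnun(y)|\lesssim |x-y|\,(M[D^2 W\epsnun](x)+M[D^2 W\epsnun](y))$, which is available because, for fixed $\nu>0$, the kernel $K_\nu$ is smooth away from the origin and $\underline{n}\epsnun$ is uniformly in every $L^p$, so $D^2 W\epsnun$ is uniformly bounded in $L^p_{t,x}$ for all $p\in(1,\infty)$. The $L^p$-boundedness of the Hardy--Littlewood maximal function then yields $\tfrac{d}{dt} Q_h \leq C\,Q_h + \mathrm{Res}_{h,\epsilon}$, with residual $\mathrm{Res}_{h,\epsilon}$ controlled (up to $|\log h|$-independent constants) by $\int\int \calK_h(x-y)\,|W\epsnun(x)-W\epsnun(y)|\,\dx x\dx y$. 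By Proposition~\ref{prop:W_eps_Strong} this last quantity is $o(|\log h|)$ as $\epsilon\to 0$. A Gronwall argument, followed by $\delta\to 0$, division by $|\log h|$, and sending first $\epsilon\to 0$ and then $h\to 0$, closes the argument (using that $Q_h(0)=O(1)$ for the smooth regularised initial data).

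The main obstacle is precisely the antisymmetric commutator in the transport term; its treatment relies on two ingredients, namely the uniform Sobolev regularity of $\grad W\epsnun$ afforded by the smoothness of $K_\nu$ at fixed $\nu>0$, and the already established strong $L^1$-compactness of $W\epsnun$ from Proposition~\ref{prop:W_eps_Strong}, which absorbs the non-Gronwall residual in the logarithmically scaled limit.
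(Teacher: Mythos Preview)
Your overall strategy---Bresch--Jabin compactness via a Gronwall inequality for the $\calK_h$-weighted oscillation, using the prior compactness of $W\epsnun$ to absorb residual terms---is exactly the paper's route. However, there is a genuine gap in the treatment of the reaction term.

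The source term in Eq.~\eqref{eq:regularised-equation} is $\int_0^1 n\epsnun(x;a)\,G_N(\underline{n}\epsnun(x);a)\dx a$, which mixes the individual phenotype densities. Writing $R(x)$ for this quantity, one has
\[
|R(x)-R(y)| \;\lesssim\; \norm{G}_{L^\infty}\int_0^1 \abs*{n\epsnun(x;a)-n\epsnun(y;a)}\dx a \;+\; \norm{\partial_n G}_{L^\infty}\,\underline{n}\epsnun(y)\,\abs*{\underline{n}\epsnun(x)-\underline{n}\epsnun(y)},
\]
and the first contribution is \emph{not} controlled by $|\underline{n}\epsnun(x)-\underline{n}\epsnun(y)|$ unless $G$ is independent of the phenotype $a$. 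Consequently the Gronwall loop on your $Q_h$ (which only tracks $\underline{n}\epsnun$) does not close. The paper resolves this by introducing a second quantity
\[
Q_h(t) := \int_{\R^{2d}} \calK_h(x-y)\int_0^1\abs*{n\epsnun(x;a)-n\epsnun(y;a)}\dx a\,\dx x\dx y,
\]
running the same commutator computation on the equation for $n\epsnun(\cdot;a)$, and deriving a \emph{coupled} Gronwall inequality for $Q_h+\underline{Q}_h$. This simultaneously yields compactness of $\underline{n}\epsnun$ and of each individual $n\i\epsnun$ (the latter being needed anyway to pass to the limit in the weak formulation).

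A smaller point: the $\epsilon$-diffusion contribution is not purely nonpositive. After integrating by parts one obtains the good-sign term $-\epsilon\int\calK_h\,\chi_\delta''(v)\,(|\nabla_x v|^2+|\nabla_y v|^2)$ \emph{plus} a residual $2\epsilon\int\Delta\calK_h(x-y)\,\chi_\delta(v)\,\dx x\dx y$, which is $O(\epsilon h^{-2})$ and vanishes as $\epsilon\to0$ for fixed $h$; this is precisely the term $\mathcal{I}_4$ in the paper. This does not affect the conclusion but your claim that the diffusion ``can be discarded'' is slightly too quick.
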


\begin{proof}
	Let us denote
	\begin{equation*}
		\underline{Q}_h(t):=\int_{\R^{2d}} \calK_h(x-y)\abs*{\underline{n}\epsnun(x) - \underline{n}\epsnun(y)}\dx x\dx y,
	\end{equation*}
	and
	\begin{equation*}
		Q_h(t) := \int_{\R^{2d}} \calK_h(x-y)\int_0^1\abs*{n\epsnun(x) - n\epsnun(y)}\dx a\dx x\dx y.
	\end{equation*}
	Using Eq~\eqref{eq:regularized_Brinkman_a}, we derive
	\begin{align*}
		 \fpartial t&\abs*{n\epsnun(x) - n\epsnun(y)} \\[0.5em]
		 &\quad- \nabla_x\cdot\prt*{\nabla W\epsnun(x)\abs*{n\epsnun(x) - n\epsnun(y)}}                                            \\[0.5em]
		 &\quad - \nabla_y\cdot\prt*{\nabla W\epsnun(y)\abs*{n\epsnun(x) - n\epsnun(y)}}                                                                                       \\[0.5em]
		 &\quad + \frac12\prt*{\Delta W\epsnun(x)+\Delta W\epsnun(y)}\abs*{n\epsnun(x) - n\epsnun(y)}                                                                          \\[0.5em]
		 &\quad - \frac12\prt*{\Delta W\epsnun(x)-\Delta W\epsnun(y)}\prt*{n\epsnun(x) + n\epsnun(y)}\sigma \\[0.5em]
		 &\quad - \epsilon\prt*{\Delta_x+\Delta_y}\abs*{n\epsnun(x) - n\epsnun(y)} \\[0.5em]
		 & \leq \prt*{n\epsnun(x)G(\underline{n}\epsnun(x)) - n\epsnun(x)G(\underline{n}\epsnun(x))}\sigma,
	\end{align*}
	where $\sigma = \sigma(x,t;a):=\sign(n\epsnun(x,t;a)-n\epsnun(y,t;a))$.
	Multiplying by $\calK_h(x-y)$ and integrating, we obtain, using the symmetry of $\calK$,
	\begin{align*}
		 & \ddt \int_{\R^{2d}} \calK_h(x-y)\int_0^1\abs*{n\epsnun(x) - n\epsnun(y)}\dx a \dx x\dx y                                                          \\
		 & \leq - \int_{\R^{2d}} \calK_h(x-y)\Delta W\epsnun(x)\int_0^1\abs*{n\epsnun(x) - n\epsnun(y)}\dx a\dx x\dx y                                       \\
		 & \quad +\int_{\R^{2d}} \calK_h(x-y)\prt*{\Delta W\epsnun(x)-\Delta W\epsnun(y)} \int_0^1 n\epsnun(x)\sigma\dx a\dx x\dx y                          \\
		 & \quad -2\int_{\R^{2d}} \nabla\calK_h(x-y)\cdot\prt*{\nabla W\epsnun(x)-\nabla W\epsnun(y)}\int_0^1\abs*{n\epsnun(x) - n\epsnun(y)}\dx a\dx x\dx y \\
		 & \quad +2\epsilon \int_{\R^{2d}} \Delta \calK_h(x-y)\int_0^1\abs*{n\epsnun(x) - n\epsnun(y)}\dx a \dx x\dx y                                       \\
		 & \quad +\int_{\R^{2d}}\calK_h(x-y)\int_0^1 G_N(\underline{n}\epsnun(x))\abs*{n\epsnun(x) - n\epsnun(y)}\dx a \dx x\dx y                            \\
		 & \quad +\int_{\R^{2d}}\calK_h(x-y)\int_0^1 n\epsnun(y)\prt*{G_N(\underline{n}\epsnun(x))-G_N(\underline{n}\epsnun(y))}\sigma\dx a \dx x\dx y\\
		 & =: \mathcal I_1 + \ldots + \mathcal I_6.
	\end{align*}
	We now observe the following bounds:
	\begin{align*}
		 \mathcal I_2  &\leq \abs*{\int_{\R^{2d}} \calK_h(x-y)\prt*{\Delta W\epsnun(x)-\Delta W\epsnun(y)} \int_0^1 n\epsnun(x)\sigma\dx a\dx x\dx y}                 \\
		 & \leq \frac{1}{\nu}\norm*{\underline{n}\epsnun}_{L^\infty(0,T; L^\infty(\Rd))}\int_{\R^{2d}} \calK_h(x-y)\abs*{W\epsnun(x)-W\epsnun(y)}\dx x\dx y    \\
		 & \quad + \frac{1}{\nu}\int_{\R^{2d}} \calK_h(x-y)\abs*{\underline{n}\epsnun(x)-\underline{n}\epsnun(y)}\underline{n}\epsnun(x)\dx x\dx y,
	\end{align*}
	having used Brinkman's law, and
	\begin{align*}
		\mathcal I_4 &\leq 2\epsilon \int_{\R^{2d}} \Delta \calK_h(x-y)\int_0^1\abs*{n\epsnun(x) - n\epsnun(y)}\dx a \dx x\dx y\\
		&\leq C\norm*{\underline{n}\epsnun}_{L^\infty(0,T; L^\infty(\Rd))}\frac{\epsilon}{h^2}, 
	\end{align*}
	from directly estimating the Laplacian of the kernel $\calK_h$.
	For the terms involving the growth rates, we have
	\begin{align*}
		 \mathcal I_5 &= \int_{\R^{2d}}\calK_h(x-y)\int_0^1 G_N(\underline{n}\epsnun(x))\abs*{n\epsnun(x) - n\epsnun(y)}\dx a \dx x\dx y                                          \\
		 & \leq \sup_{a\in[0,1]}\sup_{n\in[0,\bar n]} |G(n;a)| \int_{\R^{2d}}\calK_h(x-y)\int_0^1 \abs*{n\epsnun(x) - n\epsnun(y)}\dx a \dx x\dx y,
	\end{align*}
	and
	\begin{align*}
		 \mathcal I_6 &= \int_{\R^{2d}}\calK_h(x-y)\int_0^1 n\epsnun(y)\prt*{G_N(\underline{n}\epsnun(x))-G_N(\underline{n}\epsnun(y))}\sigma\dx a \dx x\dx y                                                    \\
		 & \leq \sup_{a\in[0,1]}\sup_{n\in[0,\bar n]} |\partial_n G(n;a)| \int_{\R^{2d}}\calK_h(x-y)\underline{n}\epsnun(y)\abs*{\underline{n}\epsnun(x)-\underline{n}\epsnun(y)} \dx x\dx y \\
		 & \leq \alpha \norm*{\underline{n}\epsnun}_{L^\infty(0,T;L^\infty(\Rd))}\int_{\R^{2d}}\calK_h(x-y)\abs*{\underline{n}\epsnun(x)-\underline{n}\epsnun(y)} \dx x\dx y.
	\end{align*}
	Finally, we consider the main commutator
    \begin{equation*}
        \mathcal I_3 = \int_{\R^{2d}} \nabla\calK_h(x-y)\cdot\prt*{\nabla W\epsnun(x)-\nabla W\epsnun(y)}\int_0^1\abs*{n\epsnun(x) - n\epsnun(y)}\dx a\dx x\dx y.
    \end{equation*}
    The careful treatment of commutators of this form is one of the main contributions of the papers~\cite{BelgacemJabin, BelgacemJabin2}. Since our case is covered by these results, we omit the details, and refer the reader to the aforementioned papers for full proof. Applying~\cite[Proposition~13]{BelgacemJabin2}, we deduce
	\begin{align*}
		\mathcal I_3 & \leq C_1\norm{D^2 W\epsnun}_{L^\infty(0,T; L^2(\Rd))} \abs*{\log h}^{1/2}\\
		& \quad + C_2\norm*{\Delta W\epsnun}_{L^\infty(0,T;L^\infty(\R^d))}\int_{\R^{2d}}\calK_h(x-y)\int_0^1\abs*{n\epsnun(x) - n\epsnun(y)}\dx a \dx x\dx y,
	\end{align*}
    where the constant $C_1$ depends on the norms of $n\epsnun$.
	Putting all the estimates together, we have
	\begin{align*}
		 & \ddt \int_{\R^{2d}} \calK_h(x-y)\int_0^1\abs*{n\epsnun(x) - n\epsnun(y)}\dx a \dx x\dx y                                                 \\
		 & \leq C\int_{\R^{2d}}\calK_h(x-y)\int_0^1\abs*{n\epsnun(x) - n\epsnun(y)}\dx a \dx x\dx y\\
		 & \quad + C\int_{\R^{2d}}\calK_h(x-y)\abs*{\underline{n}\epsnun(x) - \underline{n}\epsnun(y)} \dx x\dx y\\
		 & \quad +C\int_{\R^{2d}}\calK_h(x-y)\abs*{W\epsnun(x) - W\epsnun(y)} \dx x\dx y\\
        &\quad + C\frac{\epsilon}{h^2} + C\abs*{\log h}^{1/2},
	\end{align*}
    where we stress that the constants may depend unfavourably on $\nu>0$ but are independent of $N$ and $\epsilon$.
	Proceeding in the exact same way with the equation for $\underline{n}\epsnun$, we obtain
	\begin{align*}
		 & \ddt \int_{\R^{2d}} \calK_h(x-y)\abs*{\underline{n}\epsnun(x) - \underline{n}\epsnun(y)}\dx x\dx y                                       \\
		 & \leq +C\int_{\R^{2d}}\calK_h(x-y)\int_0^1\abs*{n\epsnun(x) - n\epsnun(y)}\dx a \dx x\dx y                                               \\
		 & \quad + C\int_{\R^{2d}}\calK_h(x-y)\abs*{\underline{n}\epsnun(x) - \underline{n}\epsnun(y)} \dx x\dx y                                   \\
		 & \quad +C\int_{\R^{2d}}\calK_h(x-y)\abs*{W\epsnun(x) - W\epsnun(y)} \dx x\dx y\\
        &\quad +C\frac{\epsilon}{h^2} + C\abs*{\log h}^{1/2}.
	\end{align*}
	Hence, the sum of the two compactness quantities satisfies
	\begin{equation}\label{eq:Compactness_Gronwall}
    \begin{aligned}
		\ddt \brk*{Q_h(t) + \underline{Q}_h(t)} & \leq C\frac{\epsilon}{h^2} + C\abs*{\log h}^{1/2} + C\brk*{Q_h(t) + \underline{Q}_h(t)} \\
		& \quad +C\int_{\R^{2d}}\calK_h(x-y)\abs*{W\epsnun(x) - W\epsnun(y)} \dx x\dx y.
	\end{aligned}
    \end{equation}
	Applying Gronwall's lemma, we have
	\begin{align*}
		Q_h(t) + \underline{Q}_h(t) & \leq C\frac{\epsilon}{h^2} + C\abs*{\log h}^{1/2} + C\brk*{Q_h(0) + \underline{Q}_h(0)}     \\
		& \quad +C\int_0^T\int_{\R^{2d}}\calK_h(x-y)\abs*{W\epsnun(x) - W\epsnun(y)} \dx x\dx y\dx t.
	\end{align*}
	Multiplying by $|\log h|$, taking the limit superior over $\epsilon>0$, and letting $h\to0$, we deduce that
	\begin{equation*}
		\lim_{h\to0}\,\limsup_{\epsilon\to0}\,|\log h|^{-1}\,\sup_{t\in[0,T]}\,\underline{Q}_h(t) = 0,
	\end{equation*}
	where we used the fact that the initial data is compact in $\epsilon$ by construction, and~\eqref{eq:Q_W_quantity} from Proposition~\ref{prop:W_eps_Strong}.
	Using Lemma~\ref{lem:CompactnessCriterion}, we conclude that the sequence $(\underline{n}\epsnun)_{\epsilon>0}$ is compact in $L^1(0,T;L^1_{\mathrm{loc}}(\R^d))$. 
    The same limit is true for $Q_h$. Observing that
	\begin{align*}
		 Q_h(t) & = \int_{\R^{2d}} \calK_h(x-y)\int_0^1\abs*{n\epsnun(x) - n\epsnun(y)}\dx a \dx x\dx y                                                             \\
		 & =\int_0^1\int_{\R^{2d}} \calK_h(x-y)\abs*{n\epsnun(x) - n\epsnun(y)}\dx x\dx y\dx a                                                             \\
		 & = \int_0^1\int_{\R^{2d}} \calK_h(x-y)\sum_{i=1}^N\abs*{n\i\epsnun(x) - n\i\epsnun(y)}\indicator_{(\frac{i-1}{N},\frac{i}{N}]}(a)\dx x\dx y\dx a \\
		 & = \frac{1}{N}\sum_{i=1}^N\int_{\R^{2d}} \calK_h(x-y)\abs*{n\i\epsnun(x) - n\i\epsnun(y)}\dx x\dx y,
	\end{align*}
	we have, for each $i=1,\dots,N$,
	\begin{equation*}
		\lim_{h\to0}\,\limsup_{\epsilon\to0}\,|\log h|^{-1}\,\sup_{t\in[0,T]}\,\int_{\R^{2d}} \calK_h(x-y)\abs*{n\i\epsnun(x) - n\i\epsnun(y)}\dx x\dx y = 0,
	\end{equation*}
	which implies local compactness of each of the densities $n\i\epsnun$ as $\epsilon\to0$.  Global compactness is obtained by virtue of the second-moment bound from Proposition~\ref{lem:second_moment}.
\end{proof}

As a consequence of the above proposition in conjunction with the uniform $L^\infty$ bounds, we deduce that the densities $n\i\epsnun$ are compact in $L^{q}(0,T,L^{p}(\Rd))$ for all $p,q \in [1,\infty)$. Moreover, since
\begin{equation*}
	\norm*{\grad W\epsnun -\grad W\nun}_{L^q(0,T;L^p(\Rd))} \leq \norm{\grad K_{\nu}}_{L^{1}(\Rd)}
	\norm{\underline{n}\epsnun- \underline{n}\nun}_{L^q(0,T;L^p(\Rd))},
\end{equation*}
the same range of strong convergence is true for $\nabla W\epsnun$.

Putting together the results obtained above, we can formulate the following summary:
\begin{corollary}[Convergence as $\epsilon \to 0$]
\label{cor:EpsConvergence}
	Upon passing to a subsequence, we can find $ n\i\nun \in L^{\infty}(0,T;L^{1}\cap L^{\infty}(\Rd)) $ such that
	\begin{itemize}
		\item $n\i\epsnun \stackrel{\star}{\rightharpoonup} n\i\nun$ in  $L^{\infty}(0,T;L^{p}(\Rd))$,  for all  $1\leq p \leq \infty$, 
        \item $n\i\epsnun\to n\i\nun$ in $L^q(0,T,L^p(\Rd))$, for all $p,q \in [1,\infty)$,
	\end{itemize}
	and, by their definition,
	\begin{itemize}
		\item $n\epsnun(\cdot, \cdot\,; a) \stackrel {\star}{\rightharpoonup} n_{\nun}(\cdot, \cdot\,; a)$ in  $L^{\infty}(0,T;L^{p}(\Rd))$,  for all  $1\leq p \leq \infty$ and a.e. $a\in [0,1]$, 
        \item $n\epsnun\to n\nun$ in $L^q(0,T,L^p(\Rd))$, for all $p,q \in [1,\infty)$,  
  		\item $\underline n\epsnun \stackrel {\star}{\rightharpoonup} \underline n_{\nun}$ in  $L^{\infty}(0,T;L^{p}(\Rd))$,  for all  $1\leq p \leq \infty$,
        \item $\underline n\epsnun\to \underline n\nun$ in $L^q(0,T,L^p(\Rd))$, for all $p,q \in [1,\infty)$,
        \item $\underline n\epsnun\to \underline n\nun$ almost everywhere in $\Rd\times[0,T]$.
		\item $\partial_{t} \underline n\epsnun \rightharpoonup \partial_{t} \underline n_{\nun}$, in $L^{2}(0,T;H^{-1}(\Rd))$,
	\end{itemize} 
	as well as 
	\begin{itemize}
        \item $W\epsnun \to W\nun$ in $L^q(0,T,W^{1,p}(\Rd))$, for all $p,q \in [1,\infty)$.
	\end{itemize} 
\end{corollary}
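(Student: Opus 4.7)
The corollary is a summary of the uniform estimates and compactness results established in the preceding subsections, so the plan is essentially to assemble previously proved facts and perform a diagonal subsequence extraction, rather than produce new estimates.

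First, I would invoke the uniform $L^\infty$-bound and the uniform $L^\infty(0,T;L^1(\Rd))$-bound for $n\i\epsnun$ (the latter follows from integrating~\eqref{eq:regularised_Brinkman_i} in space, using the $L^\infty$-control on the growth rate on $[0,\bar n]\times[0,1]$, and Gronwall). Together these give a uniform bound in $L^\infty(0,T;L^1\cap L^\infty(\Rd))$, hence, by interpolation, in $L^\infty(0,T;L^p(\Rd))$ for every $p\in[1,\infty]$. Banach--Alaoglu applied in the predual $L^1(0,T;L^{p'}(\Rd))$ then yields, along a subsequence, a weak-$\star$ limit $n\i\nun\in L^\infty(0,T;L^1\cap L^\infty(\Rd))$. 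I would perform a diagonal extraction over $i=1,\ldots,N$ to obtain a common subsequence.

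Next, for the strong convergences, I would observe that the proof of Proposition~\ref{prop:n_epsnu_compact} already establishes, by Lemma~\ref{lem:CompactnessCriterion}, that each $n\i\epsnun$ is compact in $L^1(0,T;L^1_{\mathrm{loc}}(\Rd))$, and that global $L^1$-compactness follows from the uniform second-moment bound of Proposition~\ref{lem:second_moment} applied to $\underline n\epsnun$ (which transfers to each $n\i\epsnun$ thanks to nonnegativity and $n\i\epsnun\le N\underline n\epsnun$ on the relevant slab). Uniqueness of limits identifies this strong $L^1$-limit with $n\i\nun$. To upgrade to $L^q(0,T;L^p(\Rd))$ for arbitrary $p,q\in[1,\infty)$, I would interpolate between the strong $L^1(0,T;L^1(\Rd))$-convergence and the uniform $L^\infty(0,T;L^\infty(\Rd))$-bound, which gives
\begin{equation*}
    \norm{n\i\epsnun - n\i\nun}_{L^q(0,T;L^p(\Rd))} \leq C\,\norm{n\i\epsnun - n\i\nun}_{L^1(0,T;L^1(\Rd))}^{1/(pq)}\xrightarrow{\epsilon\to 0} 0.
\end{equation*}

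The statements for $n\epsnun(\cdot,\cdot;a)$ and $\underline n\epsnun$ are then immediate from their definitions as finite linear combinations $n\epsnun=\sum_i n\i\epsnun\indicator_{((i-1)/N,i/N]}$ and $\underline n\epsnun=N^{-1}\sum_i n\i\epsnun$; almost-everywhere convergence is obtained up to a further subsequence from the strong $L^1$-convergence. The weak convergence of the time derivative is precisely the content of Proposition~\ref{prop:TimeDerivative}. Finally, for $W\epsnun$, I would use the convolution representation $W\epsnun=K_\nu\star\underline n\epsnun$ and Young's convolution inequality: since $K_\nu\in W^{1,1}(\Rd)$ for fixed $\nu>0$,
\begin{equation*}
    \norm{W\epsnun-W\nun}_{L^q(0,T;W^{1,p}(\Rd))} \leq \norm{K_\nu}_{W^{1,1}(\Rd)}\,\norm{\underline n\epsnun-\underline n\nun}_{L^q(0,T;L^p(\Rd))},
\end{equation*}
which tends to zero by the previously established strong convergence of $\underline n\epsnun$. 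There is no serious obstacle here; the only delicate point is that the $L^1$-compactness result for $\underline n\epsnun$ must be transferred to each $n\i\epsnun$, but this has in fact already been done explicitly at the end of the proof of Proposition~\ref{prop:n_epsnu_compact}, so this step is cosmetic.
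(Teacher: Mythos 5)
Your proposal is correct and follows the same route as the paper, which simply assembles the uniform $L^\infty(0,T;L^1\cap L^\infty)$ bounds, the $L^1$-compactness of Proposition~\ref{prop:n_epsnu_compact} (upgraded to $L^q(0,T;L^p)$ by interpolation with the $L^\infty$ bound), the time-derivative bound of Proposition~\ref{prop:TimeDerivative}, and Young's inequality for the convolution $W\epsnun=K_\nu\star\underline n\epsnun$. The only slight inaccuracy is the interpolation exponent $1/(pq)$: the correct rate, obtained by bounding $\|f\|_{L^p}\leq M^{1-1/p}\|f\|_{L^1}^{1/p}$ pointwise in time and then treating the time integral, is $\min(1/p,1/q)$ up to constants depending on the uniform $L^\infty_t L^1_x$ bound and on $T$; this does not affect the conclusion.
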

With these convergence results it is now trivial to pass to the limit in the weak form of System~\eqref{eq:regularised_Brinkman_i} and deduce that the tuple $(n\i\nun, W\nun)_{i=1}^N$ is a weak solution of System~\eqref{eq:Brinkman_i}. Thus, the proof of Lemma~\ref{lem:Existence} is complete. 

We conclude this subsection with the following observation regarding continuity in time of the solutions.

\begin{proposition}
    \label{prop:TimeContinuity}
    For each $\nu>0$, the function $\underline{n}_{\nu, N}$ constructed in the previous section belongs to $C([0,T]; L^2(\Rd))$, after possibly changing it on a set of measure zero. Moreover, we have $\underline{n}_{\epsilon, \nu, N}(t) \to \underline{n}_{\nu, N}(t)$, for every $t\in[0,T]$, and $\max_{t\in[0,T]} \norm{\underline{n}_{\epsilon, \nu, N}(t)}_{L^2(\Rd)} \leq C$, where $C>0$ is independent of $\epsilon$, $\nu$, and $N$.
\end{proposition}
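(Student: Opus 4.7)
The proposition packages three claims: a uniform $L^2$-bound on the approximate total densities, convergence of $\underline{n}\epsnun(t)$ to $\underline{n}\nun(t)$ for every $t\in[0,T]$, and continuity into $L^2(\Rd)$ of the limit. I would address them in this order.

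For the uniform $L^2$-bound, I would revisit the energy computation~\eqref{eq:AuxiliaryGradient}. Dropping the nonnegative dissipative term $\epsilon\int_{\Rd}|\nabla\underline{n}\epsnun|^2\dx x$ and invoking the sign property~\eqref{eq:nDeltaW-sign} gives
\begin{equation*}
\tfrac12\ddt\|\underline{n}\epsnun(t)\|_{L^2(\Rd)}^2 \leq \|G\|_{L^\infty([0,\bar n]\times[0,1])}\,\|\underline{n}\epsnun(t)\|_{L^2(\Rd)}^2.
\end{equation*}
Since $\underline{n}^{\mathrm{in}}_{\epsilon,N}$ is uniformly bounded in $L^1(\Rd)\cap L^\infty(\Rd)$, the interpolation $\|f\|_{L^2}^2\leq\|f\|_{L^1}\|f\|_{L^\infty}$ yields a uniform $L^2$-bound on the initial data, and Gronwall's lemma delivers $\max_{t\in[0,T]}\|\underline{n}\epsnun(t)\|_{L^2(\Rd)}\leq C$, independently of $\epsilon$, $\nu$, and $N$.

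For pointwise-in-time weak $L^2$-convergence, I would combine the above $L^\infty(0,T;L^2)$ bound with the uniform bound on $\partial_t\underline{n}\epsnun$ in $L^2(0,T;H^{-1}(\Rd))$ from Proposition~\ref{prop:TimeDerivative}. The former provides weak-$L^2$ relative compactness at each $t$, and the latter gives equicontinuity in $C([0,T];H^{-1}(\Rd))$. Since every $\underline{n}\epsnun$ is continuous from $[0,T]$ into $L^2(\Rd)$ by the parabolic regularity of the regularised system, an Arzel\`a--Ascoli argument in the weak topology of $L^2$ extracts a subsequence converging in $C([0,T]; L^2_w(\Rd))$ to some $u\in C_w([0,T];L^2(\Rd))$. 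The strong convergences collected in Corollary~\ref{cor:EpsConvergence} force $u=\underline{n}\nun$, so that $\underline{n}\epsnun(t)\rightharpoonup\underline{n}\nun(t)$ weakly in $L^2(\Rd)$ for every $t\in[0,T]$, with the uniform bound inherited by weak lower semicontinuity.

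The main obstacle is upgrading weak to strong $L^2$-convergence at each $t$ and promoting weak to strong continuity of the limit. For this I would pass the integrated energy identity
\begin{equation*}
\tfrac12\|\underline{n}\epsnun(t)\|_{L^2}^2 + \epsilon\!\int_0^t\!\|\nabla\underline{n}\epsnun\|_{L^2}^2\dx s = \tfrac12\|\underline{n}^{\mathrm{in}}_{\epsilon,N}\|_{L^2}^2 + \tfrac12\!\int_0^t\!\!\int_{\Rd}\!\underline{n}\epsnun^2\Delta W\epsnun\dx x\dx s + \!\int_0^t\!\!\int_{\Rd}\!\underline{n}\epsnun\!\int_0^1\!n\epsnun G_N(\underline{n}\epsnun;a)\dx a\dx x\dx s
\end{equation*}
to the limit $\epsilon\to 0$. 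The strong $L^2(0,T;L^2(\Rd))$ convergences of $\underline{n}\epsnun$ and $n\epsnun$ and the strong $L^2(0,T;W^{1,p}(\Rd))$ convergence of $W\epsnun$ from Corollary~\ref{cor:EpsConvergence} allow passage on the right-hand side, producing a continuous function $\Phi(t)$; the nonnegative dissipation on the left gives $\limsup_\epsilon\tfrac12\|\underline{n}\epsnun(t)\|_{L^2}^2\leq\Phi(t)$. Independently, testing the weak form for $\underline{n}\nun$ against itself --- permissible via a standard spatial-mollification and DiPerna--Lions commutator argument, exploiting $\underline{n}\nun\in L^\infty_{t,x}$ together with the $L^\infty_tL^2_x$-regularity of the flux $\underline{n}\nun\nabla W\nun$ and the $L^2(H^{-1})$-bound on $\partial_t\underline{n}\nun$ --- yields $\tfrac12\|\underline{n}\nun(t)\|_{L^2}^2=\Phi(t)$. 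Weak lower semicontinuity then forces $\|\underline{n}\epsnun(t)\|_{L^2}\to\|\underline{n}\nun(t)\|_{L^2}$ for every $t$, and the Radon--Riesz property of Hilbert spaces promotes weak to strong convergence; continuity of $\Phi$ in $t$ finally gives $\underline{n}\nun\in C([0,T]; L^2(\Rd))$, completing the proof.
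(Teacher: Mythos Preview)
Your argument is correct and follows a genuinely different route from the paper. The paper obtains strong $L^2$-compactness at each fixed time \emph{directly}, by exploiting the sup-in-time control on the Belgacem--Jabin quantity $\underline Q_h(t)$ already established in Proposition~\ref{prop:n_epsnu_compact}; it then proves equicontinuity in $C([0,T];L^2(\Rd))$ via a spatial mollification, estimating the mollification error by $\underline Q_\alpha(t)$ and the time-shift of the mollified function by the $H^{-1}$-bound on $\partial_t\underline n\epsnun$, before invoking Arzel\`a--Ascoli in the strong topology. Your approach instead settles first for $C_w([0,T];L^2)$ compactness from the elementary bounds, and only afterwards upgrades weak to strong convergence by matching $L^2$-norms through the energy identity, the crux being the renormalisation of the limit transport equation. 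This is legitimate here because, for $\nu>0$ fixed, the velocity $\nabla W\nun$ lies in $L^\infty(0,T;W^{1,p}(\Rd))$ for every finite $p$ with bounded divergence, well within the DiPerna--Lions regime. The paper's route recycles the heavy $\underline Q_h$ machinery already in place and yields the uniform-in-time spatial compactness quantitatively; your route is more self-contained for this particular proposition but imports a commutator lemma that the paper does not otherwise invoke. Both arrive at the same conclusion.
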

\begin{proof}
The argument is based on a generalised Arzel\`a-Ascoli theorem \cite[Theorem 47.1]{Mun2000}. First, let us observe that we can establish the uniform-in-time $L^2$-control, by considering
\begin{align*}
    \frac12 \ddt \norm{\underline{n}_{\epsilon, \nu, N}}_{L^2(\Rd)}^2 \leq C \norm{\underline{n}_{\epsilon, \nu, N}}_{L^2(\Rd)}^2,
\end{align*}
where the constant $C>0$ is independent of $\nu$, $\epsilon$, and $N$. An application of Gronwall's lemma yields 
$$
    \sup_{0 \leq t \leq T} \norm{\underline{n}_{\epsilon, \nu, N}}_{L^2(\Rd)}^2 \leq C.
$$
In addition, with the fact that
\begin{align*}
    \sup_{t\in[0,T]} & |\log h|^{-1}\iint_{\R^{2d}}  \calK_h(x-y) |\underline{n}_{\epsilon, \nu, N}(x,t) - \underline{n}_{\epsilon, \nu, N}(y,t)|^2\dx x \dx y \\
    &\leq C \sup_{t\in[0,T]} |\log h|^{-1}\iint_{\R^{2d}} \calK_h(x-y) |\underline{n}_{\epsilon, \nu, N}(x,t) - \underline{n}_{\epsilon, \nu, N}(y,t)|\dx x \dx y \to 0,
\end{align*}
by Proposition \ref{prop:n_epsnu_compact}, we can conclude that the set $(\underline{n}_{\epsilon, \nu, N}(t))_{\epsilon>0}$ is relatively compact in $L^2(\Rd)$ for each $t\in[0,T]$. It remains to show equi-continuity in $C([0,T];L^2(\Rd))$. To this end, we consider
\begin{align*}
	\norm{\underline{n}_{\epsilon, \nu, N}(t+h) - \underline{n}_{\epsilon, \nu, N}(t)}_{L^2(\Rd)}^2 
	&\lesssim  \norm{\underline{n}_{\epsilon, \nu, N, \alpha}(t+h) - \underline{n}_{\epsilon, \nu, N}(t+h)}_{L^2(\Rd)}^2\\
	&\qquad  + \norm{\underline{n}_{\epsilon, \nu, N, \alpha}(t+h) - \underline{n}_{\epsilon, \nu, N, \alpha}(t)}_{L^2(\Rd)}^2\\
	&\qquad + \norm{\underline{n}_{\epsilon, \nu, N, \alpha}(t) - \underline{n}_{\epsilon, \nu, N}(t)}_{L^2(\Rd)}^2,
\end{align*}
where we used the triangular inequality with the mollified densities
$$
    \underline{n}_{\epsilon, \nu, N, \alpha}(t) := \bar \calK_\alpha \star_x \underline{n}_{\epsilon, \nu, N}(t),
$$
for every $t\in[0,T]$. Here, the kernel $\bar \calK_\alpha$ is the one introduced in Lemma \ref{lem:CompactnessCriterion}, normalised to have unit mass. Now, we observe that
\begin{align*}
	\norm{\underline{n}_{\epsilon, \nu, N, \alpha}(t) - \underline{n}_{\epsilon, \nu, N}(t)}_{L^2(\Rd)}^2 
	&\leq \int_\Rd \abs*{\int_\Rd \bar \calK_{\alpha}(x-y)(\underline{n}_{\epsilon, \nu, N}(y, t) - \underline{n}_{\epsilon, \nu, N}(x, t) )\dx y}^2 \dx x\\
	&\leq \iint_{\R^{2d}} \bar \calK_{\alpha}(x-y)\abs*{\underline{n}_{\epsilon, \nu, N}(y, t) - \underline{n}_{\epsilon, \nu, N}(x, t)}^2 \dx y \dx x\\
	&\leq C \norm{\calK_{\alpha}}_{L^1(\Rd)} \underline{Q}_{\alpha}(t),
\end{align*}
having used Jensen's inequality, the uniform $L^\infty$-bounds, and the notation from Proposition \ref{prop:n_epsnu_compact}. Similarly, we obtain 
\begin{align*}
	\norm{\underline{n}_{\epsilon, \nu, N, \alpha}(t+h) - \underline{n}_{\epsilon, \nu, N}(t+h)}_{L^2(\Rd)}^2 
	&\leq C \norm{\calK_{\alpha}}_{L^1(\Rd)}\underline{Q}_{\alpha}(t+h).
\end{align*}
Finally, let us address the second term. We find
\begin{align*}
	&\norm{\underline{n}_{\epsilon, \nu, N, \alpha}(t+h) - \underline{n}_{\epsilon, \nu, N, \alpha}(t)}_{L^2(\Rd)}^2 \\
	&\quad = \int_\Rd \brk{\underline{n}_{\epsilon, \nu, N, \alpha}(t+h) - \underline{n}_{\epsilon, \nu, N, \alpha}(t)}\int_t^{t+h} \partial_s \underline{n}_{\epsilon, \nu, N, \alpha}(s)\dx s\dx x\\
	&\quad = \int_t^{t+h} \int_\Rd  \brk{\underline{n}_{\epsilon, \nu, N, \alpha}(t+h) - \underline{n}_{\epsilon, \nu, N, \alpha}(t)}  \partial_s \underline{n}_{\epsilon, \nu, N, \alpha}(s)\dx x\dx s\\
	&\quad \leq C \norm{\underline{n}_{\epsilon, \nu, N, \alpha}}_{L^\infty(0,T;H^1(\Rd))} \norm{\partial_t\underline{n}_{\epsilon, \nu, N, \alpha}}_{L^2(t,t+h;H^{-1}(\Rd))}\\
    &\quad \leq C \norm{\underline{n}_{\epsilon, \nu, N}}_{L^\infty(0,T;L^2(\Rd))} \norm{\partial_t\underline{n}_{\epsilon, \nu, N, \alpha}}_{L^2(0,T;H^{-1}(\Rd))}\sqrt{h}/\alpha\\
	&\quad \leq C \sqrt{h}/\alpha.
\end{align*}
Thus, we conclude that 
\begin{align*}
	\norm{\underline{n}_{\epsilon, \nu, N}(t+h) - \underline{n}_{\epsilon, \nu, N}(t)}_{L^2(\Rd)}^2 \leq C\sup_{\epsilon>0} \sup_{t\in[0,T]}|\log \alpha|^{-1} \underline Q_\alpha(t) + C\sqrt{h}/\alpha.
\end{align*}
Choosing $\alpha=h^{1/4}$, we have that
\begin{align*}
	\norm{\underline{n}_{\epsilon, \nu, N}(t+h) - \underline{n}_{\epsilon, \nu, N}(t)}_{L^2(\Rd)}^2 \to 0,
\end{align*}
as $h\to 0$ uniformly in $\epsilon>0$. Now, by the Arzel\`a-Ascoli theorem, there exists a function $g \in C([0,T];L^2(\Rd))$ and a subsequence up to which
$$
	\underline{n}_{\epsilon, \nu, N}(t) \to g(t),
$$
for all $t\in[0,T]$. Thus $g$ is the time-continuous representative of $\underline{n}_{\nu, N}$ and, henceforth, we identify $\underline{n}_{\nu, N}$ with $g$. In particular, we have $\underline{n}_{\epsilon, \nu, N}(T) \to \underline{n}_{\nu, N}(T)$.
\end{proof}

\begin{remark}
    \label{rmk:ContinuityL^1}
    For the subsequent analysis, let us point out that the continuity of $\underline{n}_{\nu, N}$ in $C([0,T];L^2(\Rd))$  and the uniform boundedness in $L^\infty(0,T;L^1(\Rd, (1 + |x|^2)\dx x)$ implies that $\underline{n}_{\nu, N} \in C([0,T];L^1(\Rd))$ and $\norm{\underline{n}_{\nu, N}(T)}_{L^1(\Rd, (1+|x|^2)\dx x)} \leq C$, uniformly in $\nu$ and $N$.
\end{remark}

\subsection{The entropy inequality}
We are now in a position to prove the entropy inequality.
\begin{proposition}[Entropy Inequality]\label{lem:entropy_inequality}
	The limit $\underline{n}\nun$ of the sequence $(\underline{n}\epsnun)_{\epsilon>0}$ constructed in the previous section satisfies the entropy inequality
	\begin{equation}
		\label{eq:entropy_inequality}
		\begin{split}
			\mathcal{H}[\underline{n}\nun](T) - \mathcal{H}[\underline{n}_{N}^{\mathrm{in}}]
			 & - \int_0^T \!\!\! \int_\Rd  \underline{n}\nun \Lap W\nun \dx x \dx t \\
			 & \leq\int_0^T \!\!\! \int_\Rd \log \underline{n}\nun
			\int_{0}^{1} n\nun G_{N}(\underline{n}\nun;a)\dx{a} \dx x \dx t.
		\end{split}
	\end{equation}
\end{proposition}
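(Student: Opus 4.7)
The plan is to derive the identity at the regularised level and then pass $\epsilon\to0$ at fixed $N$ and $\nu$. Concretely, using Eq.~\eqref{eq:regularised-equation} we compute, at least formally, the time evolution of $\mathcal{H}[\underline{n}\epsnun]$ by multiplying the equation by $\log \underline{n}\epsnun$. Rigorously justifying this requires a short truncation step: we test with $\log(\underline{n}\epsnun + \delta)$, exploit the additional parabolic regularity $\underline{n}\epsnun \in L^2(0,T;H^1(\Rd))$ (Proposition~\ref{prop:regular_n_bounds}) and $\partial_t\underline{n}\epsnun \in L^2(0,T;H^{-1}(\Rd))$ (Proposition~\ref{prop:TimeDerivative}) to perform integration by parts in a bona fide duality pairing, and then send $\delta\to 0$ using the uniform $L^\infty$ and second-moment bounds.

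After this step, the transport term becomes $\int_\Rd \underline{n}\epsnun \Delta W\epsnun\dx x$ after two integrations by parts (the $\log$ and the gradient combine), the $\epsilon$-diffusion produces the non-negative Fisher-information term $\epsilon \int |\nabla\underline{n}\epsnun|^2/\underline{n}\epsnun\dx x$, and the reaction term yields the sought integrand. Integrating in time and dropping the non-negative Fisher term leaves the regularised entropy inequality
\begin{equation*}
\mathcal{H}[\underline{n}\epsnun](T) - \mathcal{H}[\underline{n}^{\mathrm{in}}_{\epsilon,N}] - \int_0^T\!\!\int_\Rd \underline{n}\epsnun \Delta W\epsnun \dx x\dx t \leq \int_0^T\!\!\int_\Rd \log\underline{n}\epsnun\!\int_0^1\!\! n\epsnun G_N(\underline{n}\epsnun;a)\dx a\dx x\dx t.
\end{equation*}

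Now I pass to the limit $\epsilon\to0$ using Corollary~\ref{cor:EpsConvergence}. For the Laplace term, the Brinkman identity $\Delta W\epsnun = \nu^{-1}(W\epsnun - \underline{n}\epsnun)$ turns $\int \underline{n}\epsnun \Delta W\epsnun$ into $\nu^{-1}(\int \underline{n}\epsnun W\epsnun - \int \underline{n}\epsnun^2)$, both of which pass to the limit by strong convergence of $\underline{n}\epsnun$ and $W\epsnun$ in $L^2(0,T;L^2(\Rd))$. For the terminal entropy, Proposition~\ref{prop:TimeContinuity} gives $\underline{n}\epsnun(T)\to\underline{n}\nun(T)$ in $L^2(\Rd)$ (hence a.e.~up to a subsequence); the second-moment control of Proposition~\ref{lem:second_moment} and the argument of Lemma~\ref{lem:entropy-bounds} provide a uniform control on the negative part of $f\log f$, so Vitali's theorem yields $\mathcal{H}[\underline{n}\epsnun](T)\to\mathcal{H}[\underline{n}\nun](T)$. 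The initial-data entropy converges directly thanks to the approximation properties of $n^{\mathrm{in}}_\epsilon$ together with the uniform $L^\infty$ bound.

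The delicate point, as expected, is the reaction term, where the a.e.\ convergence $\log\underline{n}\epsnun\to\log\underline{n}\nun$ must be upgraded to integrated convergence despite the singularity of $\log$ at the origin. The key observation is the pointwise bound $n\epsnun(x,t;a) \leq N\,\underline{n}\epsnun(x,t)$, which is immediate from $\underline{n}\epsnun = \frac1N\sum n\i\epsnun$. Consequently,
\begin{equation*}
\Bigl|\log\underline{n}\epsnun\!\int_0^1\!\! n\epsnun G_N(\underline{n}\epsnun;a)\dx a\Bigr| \leq N \,\|G\|_{L^\infty([0,\bar n]\times[0,1])}\,\underline{n}\epsnun|\log\underline{n}\epsnun|,
\end{equation*}
and the right-hand side is equi-integrable by the entropy bound of Lemma~\ref{lem:entropy-bounds} combined with the second-moment estimate (to rule out concentration at infinity). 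Continuity of $G$, a.e.\ convergence of $\underline{n}\epsnun$, and strong $L^q L^p$ convergence of $n\epsnun$ then allow Vitali's theorem to pass to the limit, yielding the claimed inequality. The main obstacle is precisely this equi-integrability of $\underline{n}\epsnun|\log\underline{n}\epsnun|$ together with the treatment of the set $\{\underline{n}\nun=0\}$ — both handled by the $N\underline{n}$-domination and the entropy bound established earlier in the section.
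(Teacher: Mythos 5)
Your overall strategy mirrors the paper's — test the regularised equation with a shifted logarithm, drop the nonnegative Fisher-information term, and pass to the limit — but there is a genuine, unacknowledged gap at the very first step. The test function $\log(\underline{n}\epsnun+\delta)$ on all of $\R^d$ is \emph{not} in $L^2(0,T;H^1(\Rd))$: it tends to the nonzero constant $\log\delta$ as $|x|\to\infty$ (since $\underline{n}\epsnun\to 0$), and hence fails to be square-integrable. Consequently the weak formulation cannot be invoked with this choice of $\varphi$, and the "bona fide duality pairing" you invoke does not exist. The paper addresses exactly this obstacle by multiplying by a compactly supported cutoff $\phi\in C_c^\infty(\Rd)$; this generates extra commutator-type terms proportional to $\nabla\phi$ (the paper's $I_2^{\epsilon,\delta}$ and part of $I_1^{\epsilon,\delta}$ and $I_4^{\epsilon,\delta}$), which are then shown to vanish when $\phi=\chi_R\to 1$ using precisely the $L^\infty(0,T;L^1(\Rd))$ control on $\underline{n}\nun\log\underline{n}\nun$ from Lemma~\ref{lem:entropy-bounds}. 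A constant shift $\log(\underline{n}\epsnun+\delta)-\log\delta$ would also make the test function admissible, but then one has to observe separately the cancellation of the $-\log\delta$ contribution against the mass balance; either way, a nontrivial additional argument is needed and missing here.

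The remaining deviations from the paper's proof are valid alternatives and do not introduce errors. You send $\delta\to0$ at fixed $\epsilon$ and only then $\epsilon\to0$, whereas the paper does the reverse; your order works because at fixed $\epsilon$ the density $\underline{n}\epsnun\in L^2(0,T;H^1)$, and the limit $\delta\to0$ in the transport term follows from dominated convergence using $\underline{n}\epsnun/(\underline{n}\epsnun+\delta)\le 1$, while the paper's order avoids having to control $\epsilon|\log\delta|$ by letting $\epsilon\to0$ first. Your domination $n\epsnun\le N\underline{n}\epsnun$ for the reaction term, together with the entropy and second-moment bounds and Vitali's theorem, is a legitimate shortcut in place of the paper's two-parameter argument (convergence at fixed $\delta$ by bounded convergence, then $\delta\to0$ by domination); since $N$ is fixed throughout the $\epsilon$-limit, the $N$-dependent constant is harmless. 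Your identification of Proposition~\ref{prop:TimeContinuity} as the tool to pass to the limit in the terminal entropy is also the paper's approach. In short, fix the admissibility of the test function (introduce and then remove a spatial cutoff, as in the paper), and the remainder of your argument stands.
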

\begin{proof}
	Let $\delta > 0$ and $\phi \in C^{\infty}_{c} (\R^{d})$,
	$\phi \geq 0 $. We consider the following regularised form of the entropy functional
	\begin{equation*}
		\begin{aligned}
			\mathcal{H}_{\delta}^{\phi}[\underline{n}\epsnun](t) \coloneqq \int_{\R^d}^{}
			\left( \underline{n}\epsnun(x,t) + \delta \right)(\log(\underline{n}\epsnun(x,t)+\delta)-1)
			\phi(x)\dx x,
		\end{aligned}
	\end{equation*}
	and by $\mathcal{H}^{\phi}[f]$ we denote the above functional with $\delta=0$.
	Given the regularity of $ \underline{n} \epsnun $, the weak form of Eq.~\eqref{eq:regularised-equation} can be formulated as
	\begin{equation*}
		\begin{aligned}
			\int_0^T \!\!\! \int_\Rd  \partialt{\underline{n}\epsnun}\varphi(x,t)
			 & + \underline{n}\epsnun \grad \varphi(x,t) \cdot \grad W\epsnun \dx{x}\dx{t} + \epsilon \int_0^T \!\!\! \int_\Rd \grad \underline{n}\epsnun \cdot \grad \varphi(x,t)\dx{x}\dx{t} \\
			 & = \int_0^T \!\!\! \int_\Rd \varphi(x,t) \int_{0}^{1}  n\epsnun G_{N}(\underline{n}\epsnun;a) \dx{a}\dx{x}\dx{t},
		\end{aligned}
	\end{equation*}
	for any $\varphi \in L^2(0,T;H^1(\R^d))$. Choosing
	\begin{equation*}
		\begin{aligned}
			\varphi(x,t) \coloneqq \log(\underline{n}\epsnun + \delta) \phi(x),
		\end{aligned}
	\end{equation*}
	we obtain
	\begin{equation}\label{eq:entropy_estimate_split}
		\begin{aligned}
			\int_0^T \dfrac{\mathup{d}{}}{\mathup{dt}} \mathcal{H}^{\phi}_{\delta} [\underline{n}\epsnun](t)
			\dx t+ I^{\epsilon, \delta}_{1} + I^{\epsilon, \delta}_{2} +
			I^{\epsilon, \delta}_{3} + I^{\epsilon, \delta}_{4} = I^{\epsilon,
					\delta}_{5},
		\end{aligned}
	\end{equation}
	where
	\begin{equation*}
		\begin{aligned}
			I^{\epsilon, \delta}_{1} & =\int_0^T \!\!\!\int_{\R^{d}}^{} \frac{\underline{n}\epsnun}{\underline{n}\epsnun+
			\delta} \grad \underline{n}\epsnun \cdot \grad  W\epsnun \phi \dx x\dx t,                                                                                           \\
			I^{\epsilon, \delta}_{2} & = \int_0^T \!\!\! \int_\Rd \underline{n}\epsnun \log(\underline{n}\epsnun+\delta)\grad
			\phi \cdot \grad W\epsnun \dx x\dx t,                                                                                                                               \\
			I^{\epsilon, \delta}_{3} & = \epsilon \int_0^T \!\!\! \int_\Rd \frac{\abs{\grad
			\underline{n}\epsnun}^{2}}{\underline{n}\epsnun+\delta} \phi \dx x \dx t,                                                                                           \\
			I^{\epsilon, \delta}_{4} & = \epsilon \int_0^T \!\!\! \int_\Rd \log(\underline{n}\epsnun
			+\delta)\grad \phi \cdot \grad \underline{n}\epsnun \dx x \dx t,                                                                                                    \\
			I^{\epsilon, \delta}_{5} & = \int_0^T \!\!\! \int_\Rd \log(\underline{n}\epsnun+\delta) \phi \int_{0}^{1}  n\epsnun G_N(\underline{n}\epsnun;a) \dx{a}\dx{x}\dx{t}.
		\end{aligned}
	\end{equation*}

	Now we investigate each term individually. Starting with $ I_{1}^{\epsilon,\delta}$ we see
	\begin{equation*}
		\begin{aligned}
			I_{1}^{\epsilon,\delta}
			= & \int_0^T \!\!\!\int_{\R^{d}}^{} \frac{\underline{n}\epsnun}{\underline{n}\epsnun+
			\delta} \grad \underline{n}\epsnun \cdot \grad  W\epsnun \phi \dx x\dx t,                         \\
			= & \int_0^T \!\!\! \int_{\R^{d}}^{} \grad \underline{n}\epsnun \cdot \grad W\epsnun \phi
			\dx x\dx t-  \delta \int_0^T \!\!\!\int_{\R^{d}}^{} \frac{\grad \underline{n}\epsnun \cdot
			\grad  W\epsnun}{\underline{n}\epsnun+ \delta}  \phi \dx x\dx t,                                  \\
			= & -\int_0^T \!\!\! \int_\Rd  \underline{n}\epsnun \Lap W\epsnun \phi
			\dx x\dx t- \int_0^T\!\!\!\int_\Rd \underline{n}\epsnun\nabla W\epsnun\cdot\nabla\phi \dx x \dx t \\
			  & \; + \delta \int_0^T \!\!\!\int_{\R^{d}}^{}  \log(\underline{n}\epsnun
			+ \delta) \Lap  W\epsnun \phi \dx x\dx t +\delta \int_0^T \!\!\! \int_\Rd
			\log(\underline{n}\epsnun + \delta) \grad
			W\epsnun\cdot \grad \phi \dx x \dx t.
		\end{aligned}
	\end{equation*}
	Passing to the limit $\epsilon\to 0$, we readily obtain
	\begin{equation*}
		\begin{aligned}
			I_1^{\epsilon,\delta}\to I_{1}^{\delta}
			= & -\int_0^T \!\!\! \int_\Rd  \underline{n}\nun \Lap W\nun \phi
			\dx x\dx t- \int_0^T\!\!\!\int_\Rd \underline{n}\nun\nabla W\nun\cdot\nabla\phi \dx x \dx t                                                    \\
			  & \; + \delta \int_0^T \!\!\!\int_{\R^{d}}^{}  \log(\underline{n}\nun + \delta) \Lap  W\nun \phi \dx x\dx t+ \delta \int_0^T \!\!\! \int_\Rd
			\log(\underline{n}\nun + \delta) \grad W\nun\cdot \grad \phi \dx x \dx t.
		\end{aligned}
	\end{equation*}
	For the last two integrals, we observe that
	\begin{equation*}
		\begin{aligned}
			\delta \int_0^T \!\!\!\int_{\R^{d}}|\log(\underline{n}\epsnun + \delta)|
			  & |\Lap  W\epsnun| \phi \dx x\dx t                                                 \\
			+ & \delta \int_0^T \!\!\! \int_\Rd \abs{\log(\underline{n}\epsnun + \delta)} |\grad
			W\epsnun\cdot \grad \phi| \dx x \dx t\leq C \delta\abs{\log\delta}.
		\end{aligned}
	\end{equation*}
	Thus, when $\epsilon\to 0$ and $\delta\to 0$,
	\begin{equation*}
		I_1^{\epsilon,\delta}\to -\int_0^T \!\!\! \int_\Rd  \underline{n}\nun \Lap W\nun \phi
		\dx x\dx t- \int_0^T\!\!\!\int_\Rd \underline{n}\nun\nabla W\nun\cdot\nabla\phi \dx x \dx t.
	\end{equation*}
	Next, the convergence
	\begin{equation*}
		I_{2}^{\epsilon, \delta} \to \int_0^T \!\!\! \int_\Rd  \underline{n}\nun\log( \underline{n}\nun) \grad \phi \cdot \grad  W\nun  \dx x \dx t
	\end{equation*}
	follows easily from the dominated convergence theorem.
	The next term from Eq.~\eqref{eq:entropy_estimate_split}, $I_{3}^{\epsilon,\delta}$, is nonnegative and can be dropped in the limit.
	The fourth term from Eq.~\eqref{eq:entropy_estimate_split} is estimated by
	\begin{equation*}
		\begin{aligned}
			\abs*{I_{4}^{\epsilon,\delta}}
			 & \leq \epsilon \int_0^T \!\!\! \int_\Rd \abs{\grad \phi} \abs{\grad \underline{n}\epsnun}  \prt*{\abs{\log{\delta}} + \underline{n}\epsnun} \dx x \dx t                           \\[0.4em]
			 & \leq \epsilon\prt*{\norm{\underline{n}\epsnun}_{L^\infty(0,T;L^\infty(\Rd))} + |\log\delta|} \norm{\nabla\phi}_{L^2(\Rd)} \norm{\nabla \underline{n}\epsnun}_{L^2(0,T;L^2(\Rd))} \\[0.4em]
			 & \leq C\sqrt\epsilon \abs{\log(\delta)},
		\end{aligned}
	\end{equation*}
	having used Lemma~\ref{prop:regular_n_bounds}. Thus,
	as $ \epsilon \to 0$, and then $ \delta \to 0 $ we have $ I_{4}^{\epsilon,\delta} \to 0 $. The remaining term of Eq.~\eqref{eq:entropy_estimate_split} is given by
	\begin{equation*}
		\begin{aligned}
			I^{\epsilon, \delta}_{5} & = \int_0^T \!\!\! \int_\Rd \log(\underline{n}\epsnun+\delta)\phi(x) \int_{0}^{1}n\epsnun   G_N(\underline{n}\epsnun;a)\dx{a}\dx{x}\dx{t}.
		\end{aligned}
	\end{equation*}
	Notice that $n\epsnun G_N(\underline{n}\epsnun;a)$ converges strongly in $L^2$ to $n\nun G_N(\underline{n}\nun;a)$ for every $a\in[0,1]$ and $\abs*{n\epsnun G_N(\underline{n}\epsnun;a)} \leq \norm{G}_{C([0,\bar n]\times [0,1])}\bar n$.  Therefore
	\begin{equation*}
		\int_{0}^{1}n\epsnun   G_N(\underline{n}\epsnun;a)\dx{a} \to \int_{0}^{1}n\nun   G_N(\underline{n}\nun;a)\dx{a}
	\end{equation*}
	in $L^2(\Rd\times(0,T))$. It follows that, as $\epsilon\to 0$
	\begin{equation*}
		I^{\epsilon, \delta}_{5} \to \int_0^T \!\!\! \int_\Rd \log(\underline{n}\nun+\delta)\phi(x) \int_{0}^{1}n\nun   G_N(\underline{n}\nun;a)\dx{a}\dx{x}\dx{t}.
	\end{equation*}
	Then, as $\delta\to 0$, we deduce
	\begin{equation*}
		I^{\epsilon, \delta}_{5} \to \int_0^T \!\!\! \int_\Rd \log(\underline{n}\nun)\phi(x) \int_{0}^{1}n\nun   G_N(\underline{n}\nun;a)\dx{a}\dx{x}\dx{t},
	\end{equation*}
	using the dominated convergence theorem.
	Finally, we consider the term involving the time derivative.
	Since $\ds \underline{n}\epsnun \in C([0,T];L^{2}(\Rd))$, the mapping $\ds  t \mapsto \mathcal{H}^{\phi}_{\delta} [\underline{n}\epsnun(t)] $ is continuous in $[0,T]$. We therefore have
	\begin{align*}
		\int_0^T \dfrac{\mathup{d}{}}{\mathup{dt}} \mathcal{H}^{\phi}_{\delta} [\underline{n}\epsnun]
		\dx t= \mathcal{H}^{\phi}_{\delta} [\underline{n}\epsnun](T) - \lim_{s\to 0}\mathcal{H}^{\phi}_{\delta} [\underline{n}\epsnun](s) = \mathcal{H}^{\phi}_{\delta} [\underline{n}\epsnun](T) - \mathcal{H}^{\phi}_{\delta} [\underline{n}_{\epsilon, N}^{\mathrm{in}}].
	\end{align*}
	Then, since $\underline{n}\epsnun(T)\to\underline{n}\nun(T)$ in $L^2(\Rd)$, we have as $\epsilon\to 0$,
	\begin{align*}
		\mathcal{H}^{\phi}_{\delta} [\underline{n}\epsnun](T)-\mathcal{H}^{\phi}_{\delta}[\underline{n}_{\epsilon,N}^{\mathrm{in}}] \to \mathcal{H}^{\phi}_{\delta} [\underline{n}\nun](T)-\mathcal{H}^{\phi}_{\delta}[\underline{n}_{N}^{\mathrm{in}}].
	\end{align*}
 	Now, by the dominated convergence theorem, we obtain
	\begin{align*}
		\mathcal{H}^{\phi}_{\delta} [\underline{n}\nun](T) & =
		\int_\Rd \prt{\underline{n}\nun(T)+\delta}\prt{\log(\underline{n}\nun(T)+\delta)-1}\phi \dx x                                      \\
		                                                   & \to \int_\Rd \underline{n}\nun(T)\prt{\log \underline{n}\nun(T)-1}\phi \dx x,
	\end{align*}
	as $\delta\to0$.

	\medskip We have thus shown that, in the limit $\epsilon\to 0$ and, subsequently $\delta\to 0$, Eq.~\eqref{eq:entropy_estimate_split} becomes
	\begin{equation}
		\label{eq:Localised_Entropy_Inequality}
		\begin{aligned}
			\mathcal{H}^{\phi}[\underline{n}\nun](T) - \mathcal{H}^{\phi}[\underline{n}_{N}^{\mathrm{in}}]
			 & - \int_0^T \!\!\! \int_\Rd  \underline{n}\nun \Lap W\nun \phi \dx x \dx t-
			\int_0^T \!\!\! \int_\Rd  \underline{n}\nun \grad W\nun \cdot \grad \phi \dx x
			\dx t                                                                                                              \\
			 & \;\; + \int_0^T \!\!\! \int_\Rd  \underline{n}\nun\log \underline{n}\nun\grad \phi \cdot \grad W\nun\dx x \dx t \\
			 & \leq \int_0^T \!\!\! \int_\Rd \phi \log{\underline{n}\nun}
			\int_0^1 n\nun G_{N}(\underline{n}\nun;a)\dx a \dx x \dx t,
		\end{aligned}
	\end{equation}
	for any $ \phi \in C_{c}^{\infty}(\Rd),\, \phi \geq 0 $.
	Let us now choose $\phi=\chi_R$, where $\chi_R$ is a sequence of smooth cut-off functions such that $|\nabla \chi_R|\lesssim R^{-1}$. Then, using the $L^\infty L^1$-control of $\underline{n}\nun\log \underline{n}\nun$ from Proposition~\ref{lem:entropy-bounds}, we can pass to the limit $R\to\infty$ by the monotone convergence theorem to obtain Eq.~\eqref{eq:entropy_inequality}. \qedhere
\end{proof}

\section{The joint limit}
\label{sec:JointLimit}
Before we start discussing the joint limit let us prove a short convergence result of the initial data and the growth rates.
\begin{proposition}
	\label{prop:Data_Strong_N}
    The initial data of the $N$-system converges in the following sense:
    \begin{enumerate}
        \item For each $a\in[0,1]$, $n^\mathrm{in}_N(\cdot\,;a) \to n^\mathrm{in}(\cdot\,;a)$ in $L^p(\R^d)$, $1\leq p < \infty$,
        \item $\underline{n}^{\mathrm{in}}_N \to \underline{n}^{\mathrm{in}}:=\int_0^1 n^\mathrm{in}\dx a$ in $L^p(\Rd)$, $1\leq p < \infty$,
        \item up to a subsequence, $\mathcal{H}[\underline{n}_N^{\mathrm{in}}]\to \mathcal{H}[\underline{n}^{\mathrm{in}}]$.
    \end{enumerate}
    Likewise, the growth rate $G_N$ converges in the following sense:
    \begin{enumerate}
	    \setcounter{enumi}{3}
        \item For each $a\in [0,1]$, $G_N(\cdot\,;a)\to G(\cdot\,;a)$ uniformly on $I$ for each compact set $I\subset\R$,
        \item let $(m_N)_N$ be a sequence of functions such that $0 \leq m_N\leq \bar n$ and such that $m_N$ converges to $m$ in $L^p(\Rd\times(0,T))$. Then $G_N(m_N;a)$ converges to $G(m;a)$ in $L^q(\Rd\times(0,T))$ for any $p\leq q<\infty$, for any $a\in[0,1]$.
    \end{enumerate}
\end{proposition}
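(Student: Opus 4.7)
The plan is to handle items (1)--(2) and (4) via piecewise-constant and Riemann-sum arguments, while items (3) and (5) require more care. Fix $a \in [0,1]$ and let $i_N$ be the unique index with $a \in ((i_N-1)/N, i_N/N]$, so that $n_N^{\mathrm{in}}(\,\cdot\,;a) = n^{\mathrm{in}}(\,\cdot\,; i_N/N)$ with $i_N/N\to a$. The Carath\'eodory property of $n^{\mathrm{in}}$ gives pointwise a.e. convergence, which I would upgrade to $L^p$ convergence by Vitali's theorem: the bound $n^{\mathrm{in}}\leq \bar{n}$ provides equi-integrability, while the uniform-in-$a$ second-moment bound yields tightness,
\begin{equation*}
	\int_{|x|>R} \bigl(n^{\mathrm{in}}_N(x;a)\bigr)^p \dx x \leq \bar{n}^{p-1}R^{-2}\sup_{a\in[0,1]}\int_\Rd n^{\mathrm{in}}(x;a)|x|^2\dx x.
\end{equation*}
Item (2) follows the same template once one recognises $\underline{n}^{\mathrm{in}}_N$ as a Riemann sum in the phenotype variable converging pointwise a.e.\ to $\underline{n}^{\mathrm{in}}$, and item (4) is immediate from the identity $G_N(n;a) = G(n;i_N/N)$ together with the uniform continuity of $G$ on $I\times [0,1]$ for $I\subset\R$ compact.

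For item (3), after extracting a subsequence along which $\underline{n}^{\mathrm{in}}_N \to \underline{n}^{\mathrm{in}}$ a.e., I would split the entropy integrand as $f(\log f-1)\indicator_{\set*{f\geq 1}} + f(\log f -1)\indicator_{\set*{f<1}}$. On the first region the integrand is bounded by $\bar{n}(1+\log\bar{n})$ and supported on a set of measure uniformly controlled by the $L^1$ bound, so dominated convergence applies. On the second region I would repeat the Gaussian-comparison estimate from Lemma~\ref{lem:entropy-bounds}, which produces an $L^1$ dominating function via the second-moment bound and Jensen's inequality, and then invoke the generalised dominated convergence theorem.

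The bulk of the difficulty lies in item (5). I would use the decomposition
\begin{equation*}
	G_N(m_N;a) - G(m;a) = \bigl[G(m_N; i_N/N)-G(m;i_N/N)\bigr] + \bigl[G(m;i_N/N) - G(m;a)\bigr].
\end{equation*}
The first bracket is pointwise dominated by $\norm*{\partial_n G}_{L^\infty([0,\bar{n}]\times[0,1])}\abs*{m_N - m}$ and thus tends to zero in $L^p$ by hypothesis; since it is also uniformly bounded in $L^\infty$, interpolation $\norm{u}_{L^q}\leq \norm{u}_{L^p}^{p/q}\norm{u}_{L^\infty}^{1-p/q}$ upgrades this to $L^q$ convergence for every $p\leq q<\infty$. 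The second bracket is where I expect the main obstacle: it is controlled uniformly in $(x,t)$ by the modulus of continuity $\omega_G(|i_N/N-a|)\to 0$, but this $L^\infty$ bound alone does not yield $L^q$ decay on the unbounded domain $\Rd\times(0,T)$. I would close the argument by leveraging tightness of $m_N$---available in the applications of the proposition thanks to the uniform second-moment control on the relevant densities---or equivalently by reading the conclusion in a localised sense, which is exactly the regime in which item (5) is invoked when passing to the limit in the weak formulations against compactly supported test functions.
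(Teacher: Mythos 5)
Your treatment of items (1), (2) and (4) essentially mirrors the paper's argument: the same $a_N = i(N)/N \to a$ device, with $L^p$ convergence obtained by domain-splitting (your Vitali route is equivalent to the paper's split into $K_\delta$ and its complement plus dominated convergence on the compact piece), and the Riemann-sum interpretation for (2) is a harmless rephrasing of the paper's Minkowski-plus-dominated-convergence argument. Item (4) is identical.

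Item (3) is where your sketch has a genuine gap. On the region $\set{\underline{n}_N^{\mathrm{in}}<1}$ the Gaussian-comparison decomposition gives the pointwise bound
\begin{equation*}
-\underline{n}_N^{\mathrm{in}}\log\underline{n}_N^{\mathrm{in}} \;\leq\; e^{-1}\mathcal{N}(x) + \underline{n}_N^{\mathrm{in}}\prt*{\tfrac12|x|^2 + C},
\end{equation*}
so the natural dominating sequence $g_N$ contains the term $\underline{n}_N^{\mathrm{in}}|x|^2$. The generalised dominated convergence theorem then requires $\int g_N \to \int g$, i.e.\ convergence of the second moments $\int \underline{n}_N^{\mathrm{in}}|x|^2\dx x$, which is not provided by the hypotheses — only the uniform bound is. (Jensen's inequality, as in Lemma~\ref{lem:entropy-bounds}, gives an integral bound, not a pointwise majorant.) The paper avoids this by a different route: from the $L^1$ and $L^2$ convergence of $\underline{n}_N^{\mathrm{in}}$ it extracts a subsequence converging a.e.\ together with fixed dominating functions $h_1\in L^1$, $h_2\in L^2$ (the standard Riesz--Fischer corollary), then uses $\abs*{s\log s}\leq \sqrt{s}+s^2$ to dominate $\underline{n}_N^{\mathrm{in}}\log\underline{n}_N^{\mathrm{in}}$ by $\sqrt{h_1}+\norm{\underline{n}_N^{\mathrm{in}}}_{L^\infty}h_2\in L^2$, yielding $L^2$ convergence; the upgrade to $L^1$ is done by Cauchy--Schwarz on a compact set plus a uniform-in-$N$ tail estimate obtained from the moment bound via Hölder interpolation, not via dominated convergence.

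For item (5), your observation is correct and is in fact sharper than the paper's own argument. The paper writes the same decomposition and then asserts that $\norm{G_N(m;a)-G(m;a)}_{L^p(\Rd\times(0,T))}\to0$ "by item (4)", but item (4) only gives $L^\infty$ decay, and since $G(0;a)>0$ by (\ref{hyp:G2})--(\ref{hyp:G3}), the difference $G(m;a_N)-G(m;a)$ tends to the nonzero constant $G(0;a_N)-G(0;a)$ at spatial infinity and so does not even belong to $L^p(\Rd\times(0,T))$ for finite $N$. Your localisation fix is the right one — and it is indeed all that is needed, since every invocation of item (5) in the paper is against compactly supported test functions or over a compact $K$. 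Note, however, that tightness of $m_N$ alone does not cure the problem: the offending constant lives precisely where $m_N$ vanishes, so it is the multiplication by a decaying density (or the cutoff) that rescues the argument, not tightness of $m_N$ per se.
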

\begin{proof}
	\underline{Ad (1).}
	Let $a\in[0,1]$ be fixed and note that there is an integer $i(N)$ such that 
	$$
		a\in\left(\frac{i(N)-1}{N},\frac{i(N)}{N}\right],
	$$
	for each each $N$. Now, let $a_N:=\frac{i(N)}{N}$ and observe that then  $|a_N-a|<\frac{1}{N}$, so that $a_N\to a$.
	Next, consider
	\begin{align*}
		\norm*{n_N^{\mathrm{in}}(\cdot\,;a)-n^{\mathrm{in}}(\cdot\,;a)}_{L^p(\Rd)} 
		&= \norm*{n_N^{(i(N)),\mathrm{in}}(\cdot)-n^{\mathrm{in}}(\cdot\,;a)}_{L^p(\Rd)} \\
		&= \norm*{n^{\mathrm{in}}(\cdot\,;a_N)-n^{\mathrm{in}}(\cdot\,;a)}_{L^p(\Rd)}.
	\end{align*}
    Let $\delta>0$ and choose a compact set $K_\delta\subset\Rd$ such that
    \begin{align*}
        \norm*{n^{\mathrm{in}}(\cdot\,;a_N)-n^{\mathrm{in}}(\cdot\,;a)}_{L^p(\Rd)} &\leq \norm*{n^{\mathrm{in}}(\cdot\,;a_N)-n^{\mathrm{in}}(\cdot\,;a)}_{L^p(K_\delta)} + \norm*{n^{\mathrm{in}}(\cdot\,;a_N)-n^{\mathrm{in}}(\cdot\,;a)}_{L^p(\Rd\setminus K_\delta)}\\
        &\leq \norm*{n^{\mathrm{in}}(\cdot\,;a_N)-n^{\mathrm{in}}(\cdot\,;a)}_{L^p(K_\delta)} + \frac{\delta}{3},
    \end{align*}
    where we used the uniform moment bound for $n^{\mathrm{in}}$ and the $L^\infty$-bound $n^{\mathrm{in}}\leq \bar n$.
    Regarding the first term, we use the continuity of $n^{\mathrm{in}}$ in the $a$-variable, Property~\eqref{eq:DataLpBound}, and the dominated convergence theorem.

	\bigskip \underline{Ad (2).} 
	For $\underline{n}^{\mathrm{in}}_N = \int_0^1 n^{\mathrm{in}}_N\dx a$, we use Minkowski's inequality to estimate
	\begin{align*}
		\norm*{\underline{n}^{\mathrm{in}}_N-\underline{n}^{\mathrm{in}}}_{L^p(\Rd)}
		\leq \int_0^1  \norm*{n^{\mathrm{in}}_N(\cdot\,;a)-n^{\mathrm{in}}(\cdot\,;a)}_{L^p(\Rd)} \dx a.
	\end{align*}
	The integrand converges for each $a$ and is bounded by
	\begin{align*}
		\norm{n^{\mathrm{in}}_N(\cdot\,;a)-n^{\mathrm{in}}(\cdot\,;a)}_{L^p(\Rd)} \leq 2\sup_{a\in[0,1]}\norm{n^{\mathrm{in}}(\cdot\,; a)}_{L^p(\Rd)} \leq C.
	\end{align*}
	Therefore, by the dominated convergence theorem, we deduce $\underline{n}^{\mathrm{in}}_N \to \underline{n}^{\mathrm{in}}$, in $L^p(\Rd)$.
	
    \bigskip \underline{Ad (3).} Recall that
    \begin{equation*}
        \mathcal{H}[\underline{n}_N^{\mathrm{in}}] = \int_{\Rd} \underline{n}_N^{\mathrm{in}}\log{\underline{n}_N^{\mathrm{in}}} - \underline{n}_N^{\mathrm{in}} \dx x.
    \end{equation*}
    To show convergence towards $\mathcal{H}[\underline{n}^{\mathrm{in}}]$, we investigate the logarithmic term. Since $\underline{n}_N^{\mathrm{in}}$ is bounded in $L^\infty(\Rd)$ and converges to $\underline{n}^{\mathrm{in}}$ in $L^1(\Rd)$, it follows from the dominated convergence theorem that, for a subsequence, $\underline{n}_N^{\mathrm{in}}\log{\underline{n}_N^{\mathrm{in}}}$ converges strongly to $\underline{n}^{\mathrm{in}}\log{\underline{n}_N^{\mathrm{in}}}$ in $L^2$. 
    Indeed, there is a subsequence such that $\underline{n}_N^{\mathrm{in}}$ converges almost everywhere on $\R^d$ and a function $h_1\in L^1(\Rd)$ such that $\underline{n}_N^{\mathrm{in}} \leq h_1$ almost everywhere.
    Since $\underline{n}_N^{\mathrm{in}}$ converges also in $L^2(\Rd)$, we can choose a further subsequence and a function $h_2 \in L^2(\Rd)$ with $\underline{n}_N^{\mathrm{in}}\leq h_2$ almost everywhere.  Note that
    \begin{equation*}
        \abs*{\underline{n}_N^{\mathrm{in}}\log{\underline{n}_N^{\mathrm{in}}}} \leq \sqrt{\underline{n}_N^{\mathrm{in}}} + (\underline{n}_N^{\mathrm{in}})^2 \leq \sqrt{h_1} + \norm*{\underline{n}_N^{\mathrm{in}}}_{L^\infty(\Rd)} h_2.
    \end{equation*}
    Since $\sqrt{h_1}+h_2 \in L^2(\Rd)$, we obtain the desired convergence in $L^2$, by dominated convergence.
  
     Now, using the second-moment control, we deduce convergence in $L^1$ as follows. Let $1>\delta>0$ be given and let $K_\delta = B_{R(\delta)}(0) \subset{\R^d}$ be a ball such that
        \begin{equation}
        \label{eq:EntropyMomentBound}
            \sup_{N\geq 2}\,\int_{\Rd\setminus K_\delta} \underline{n}_N^{\mathrm{in}}\abs*{\log{\underline{n}_N^{\mathrm{in}}}}\dx x < \delta/3.
        \end{equation}
Indeed, this is possible which can be established as follows. First, let us observe that
\begin{align*}
    \int_{\Rd \setminus K_\delta} \underline{n}_N^{\mathrm{in}} |\log \underline{n}_N^{\mathrm{in}}| \dx x &\leq d \int_{\Rd \setminus K_\delta}  \prt*{\underline{n}_N^{\mathrm{in}}}^{\frac{d+1}{d+3}} \dx x + \int_{\Rd \setminus K_\delta} \prt*{\underline{n}_N^{\mathrm{in}}}^2 \dx x\\
    &\leq d \int_{\Rd \setminus K_\delta}  \prt*{\underline{n}_N^{\mathrm{in}}}^{\frac{d+1}{d+3}} \dx x + \frac{C}{R(\delta)^2}.
\end{align*}
Note that
\begin{align*}
    \int_{\Rd \setminus K_\delta} \prt*{\underline{n}_N^{\mathrm{in}}}^{\frac{d+1}{d+3}}\dx x &= \int_{\Rd \setminus K_\delta} \prt*{\underline{n}_N^{\mathrm{in}}}^{\frac{d+1}{d+3}} \frac{|x|^{\frac{2(d+1)}{d+3}}}{|x|^{\frac{2(d+1)}{d+3}}} \dx x\\
    &\leq  \left( \int_{\Rd \setminus K_\delta} \frac{1}{|x|^{d+1}}\dx x \right)^{\frac{2}{d+3}} \left( \int_{\Rd \setminus K_\delta} {\underline{n}_N^{\mathrm{in}}}|x|^2 \dx x \right)^{\frac{d+1}{d+3}},
\end{align*}
where the second term is bounded by the uniform moment estimate. The first term can be estimated by 
$$
    \left( \int_{\Rd \setminus K_\delta} \frac{1}{|x|^{d+1}}\dx x \right)^{\frac{2}{d+3}} = \left(\int_{|x|> R(\delta)} |x|^{-d-1} \dx  x\right)^{\frac{2}{d+3}} \approx R(\delta)^{-2/(d+3)}.
$$
Choosing $R(\delta)\approx \delta^{-\frac{d+3}{2}}$, we deduce~\eqref{eq:EntropyMomentBound}.
Now, in conjunction with the $L^2$-convergence above, we find $N_0 \in N$ such that 
\begin{equation*}    \norm{\underline{n}_N^{\mathrm{in}}\log{\underline{n}_N^{\mathrm{in}}} - \underline{n}^{\mathrm{in}}\log{\underline{n}^{\mathrm{in}}}}_{L^2(\Rd)} < \frac{\delta}{3 |K_\delta|^{1/2}},
\end{equation*}
    for any $N\geq N_0$. Then,
    \begin{align*}
    \norm{\underline{n}_N^{\mathrm{in}}\log{\underline{n}_N^{\mathrm{in}}} - \underline{n}^{\mathrm{in}}\log{\underline{n}^{\mathrm{in}}}}_{L^1(\Rd)} &\leq \norm{\underline{n}_N^{\mathrm{in}}\log{\underline{n}_N^{\mathrm{in}}} - \underline{n}^{\mathrm{in}}\log{\underline{n}^{\mathrm{in}}}}_{L^1(K_\delta)} + 2\delta/3\\
    &\leq |K_\delta|^{1/2}\norm{\underline{n}_N^{\mathrm{in}}\log{\underline{n}_N^{\mathrm{in}}} - \underline{n}^{\mathrm{in}}\log{\underline{n}^{\mathrm{in}}}}_{L^2(\Rd)} + 2\delta/3\\
    &<\delta,
    \end{align*}
    which proves the claim.

	\bigskip \underline{Ad (4).} 
	Let us now consider the growth rates. Let $n\in I$ for some compact set $I\subset\R$. For $a\in[0,1]$, let $a_N$ be a sequence constructed as before. Then
	\begin{align*}
		\max_{n\in I}\abs*{G_N(n;a) - G(n;a)} = \max_{n\in I}\abs*{G(n;a_N) - G(n;a)} \leq \norm{\partial_a G}_{L^\infty(I \times [0,1])} |a_N-a| \to 0.
	\end{align*}
 
	\bigskip \underline{Ad (5).} 
	Finally, let  $(m_N)_N$ be a sequence strongly converging to $m$ in $L^p$. Then
    \begin{align*}
	   \norm{G_N(m_N;a) - G(m;a)}_{L^p} \leq \norm{G_N(m_N;a) - G_N(m;a)}_{L^p} + \norm{G_N(m;a) - G(m;a)}_{L^p}.
    \end{align*}
    The last term converges to zero by item (4) of this proof. For the remaining term we write
    \begin{align*}
        \norm{G_N(m_N;a) - G_N(m;a)}_{L^p} \leq \alpha \norm{m_n-m}_{L^p}\to 0,
    \end{align*}
    which gives the desired convergence in $L^p$. Interpolating with the $L^\infty$ bound (since $0 \leq m_N\leq\bar n$), we conclude the proof.
\end{proof}

\subsection{Compactness of the rescaled total population density}
For the remainder of this section let $\nu = \nu_N$ be any sequence such that $\nu_N\to 0$ as $N \to \infty$. Let us stress that we impose no conditions on the speed or monotonicity of the convergence $\nu_N\to0$.
The uniform bounds in $L^\infty(0,T;L^1\cap L^\infty(\Rd))$ obtained in the proof of Lemma~\ref{lem:Existence} allow us to extract subsequences of $\underline{n}\nnun$ and $n\nnun$ which converge weakly in $L^\infty(0,T,L^p(\Rd))$, $1\leq p \leq \infty$, namely
\begin{align} 
	\label{eq:inter_mean_inviscid}
	\underline{n}\nnun \rightharpoonup\underline{n}\oinfty ,\quad \text{ and } \quad n\nnun(\cdot,\cdot\,;a) \rightharpoonup n\oinfty(\cdot,\cdot\,;a),
\end{align}
for every $a\in[0,1]$. Clearly, we have
\begin{equation*}
    \underline{n}\oinfty = \int_0^1 n\oinfty\dx a.
\end{equation*}

\begin{proposition}
    \label{lem:Wton_invicid}
	The following convergence holds:
	\begin{equation*}
		W\nnun \to \underline{n}\oinfty,
	\end{equation*}
	in $ L^{2}(0,T,L^{2}(\Rd)) $. In particular, $ \underline{n}\oinfty  \in L^{2}(0,T;H^{1}(\Rd)) $.
\end{proposition}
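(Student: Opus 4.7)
My plan is to extract quantitative information from the entropy inequality in Lemma~\ref{lem:entropy-ineq} and combine it with an Aubin--Lions type compactness argument for $W\nnun$. The starting point is an algebraic dissipation identity obtained by multiplying Brinkman's equation $-\nu\Lap W\nun + W\nun = \underline{n}\nun$ by $\underline{n}\nun - W\nun$ and integrating:
\begin{equation*}
    -\int_{\Rd} \underline{n}\nun \Lap W\nun \dx x = \int_{\Rd} \abs*{\grad W\nun}^2 \dx x + \frac{1}{\nu}\int_{\Rd} \prt*{\underline{n}\nun - W\nun}^2 \dx x.
\end{equation*}
Plugging this into the entropy inequality and bounding its right-hand side via the pointwise estimate
\begin{equation*}
    \abs*{\log \underline{n}\nun \int_0^1 n\nun G_N(\underline{n}\nun; a)\dx a} \leq \norm*{G}_{L^\infty} \underline{n}\nun \abs*{\log \underline{n}\nun},
\end{equation*}
together with the uniform entropy control of Lemma~\ref{lem:entropy-bounds}, I obtain the two uniform estimates $\norm*{\grad W\nun}_{L^2(0,T;L^2(\Rd))} \leq C$ and $\norm*{\underline{n}\nun - W\nun}_{L^2(0,T;L^2(\Rd))} \leq C\sqrt{\nu}$. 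Taking $\nu = \nu_N \to 0$ yields in particular $W\nnun - \underline{n}\nnun \to 0$ strongly in $L^2(0,T;L^2(\Rd))$.

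Next, I would establish strong $L^2$-compactness of the family $W\nnun$ by the Aubin--Lions lemma. The $L^2(H^1)$-bound just obtained provides the first ingredient. For the time derivative, testing the equation for $\underline{n}\nun$ against $\varphi \in H^1(\Rd)$ and invoking $\int \underline{n}\nun\abs*{\grad W\nun}^2 \leq C$ from Proposition~\ref{lem:n_gradW2eps} gives $\partial_t \underline{n}\nun \in L^2(0,T;H^{-1}(\Rd))$ uniformly in $\nu$ and $N$. Since $\partial_t W\nun = K_\nu \star \partial_t \underline{n}\nun$ and the Fourier symbol $(1+\nu\abs*{\xi}^2)^{-1}$ is bounded by $1$, $K_\nu$ acts as a contraction on every $H^s$, so $\partial_t W\nnun$ inherits the same uniform $L^2(H^{-1})$-bound. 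Aubin--Lions then delivers compactness in $L^2(0,T;L^2_{\mathrm{loc}}(\Rd))$, which is promoted to global compactness by tightness: testing Brinkman with $\abs*{x}^2/2$ combined with Proposition~\ref{lem:second_moment} and the bound $W\nun \leq \bar n$ controls $W\nnun$ outside large balls uniformly in $N$.

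Along a subsequence $W\nnun \to W^\star$ strongly in $L^2(0,T;L^2(\Rd))$, whence $\underline{n}\nnun \to W^\star$ strongly as well; the weak convergence $\underline{n}\nnun \rightharpoonup \underline{n}\oinfty$ in~\eqref{eq:inter_mean_inviscid} then identifies $W^\star = \underline{n}\oinfty$, and uniqueness of the limit dispenses with the subsequence. Weak lower semicontinuity of $\norm*{\grad\cdot}_{L^2(L^2)}$ applied to $\grad W\nnun$ finally gives $\underline{n}\oinfty \in L^2(0,T;H^1(\Rd))$.

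The hard part, I expect, will be the uniform bound on the right-hand side of the entropy inequality: a naive estimate breaks down because $\log\underline{n}\nun$ itself is not integrable, and the whole scheme hinges on the fact that the source term $\int n G$ contributes the extra factor of $\underline{n}\nun$ that precisely matches the integrability coming from $\underline{n}\nun \abs*{\log\underline{n}\nun} \in L^\infty(0,T;L^1(\Rd))$. A secondary technical point is the $\nu$-uniformity of the $H^{-1}$-bound on $\partial_t W\nun$, which must not degenerate as $\nu_N \to 0$; this is exactly what the contraction property of $K_\nu$ on $H^{-1}$ ensures.
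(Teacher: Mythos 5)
Your proposal is correct and follows essentially the same route as the paper: entropy inequality gives a uniform $L^2_tL^2_x$ bound on $\nabla W\nnun$; the contraction property of $K_\nu$ (equivalently, a duality argument with $K_\nu\star\varphi$) transfers the $L^2(0,T;H^{-1})$ bound from $\partial_t\underline{n}\nnun$ to $\partial_t W\nnun$; Aubin--Lions gives local compactness; the second-moment control supplies global compactness; and the limit is identified from the weak convergence of $\underline{n}\nnun$ together with $W\nnun = K_{\nu_N}\star\underline{n}\nnun$. The one modest improvement in your version is the algebraic dissipation identity obtained by multiplying Brinkman by $\underline{n}\nun - W\nun$, namely
\begin{equation*}
	-\int_{\Rd}\underline{n}\nun\Lap W\nun \dx x = \int_{\Rd}\abs*{\grad W\nun}^2\dx x + \frac{1}{\nu}\int_{\Rd}\prt*{\underline{n}\nun - W\nun}^2\dx x,
\end{equation*}
which simultaneously delivers the gradient bound and the $O(\sqrt{\nu})$ rate for $\underline{n}\nun - W\nun$; the paper derives the first as an inequality here and proves the second separately in Proposition~\ref{prop:NuToZeroDensity}, so your scheme merges those two propositions into one calculation.
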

\begin{proof}
	The local compactness of $ W\nnun $ is established using the
	Aubin-Lions lemma. We begin by proving space regularity, using the Brinkman equation
	\begin{equation*}
		\begin{aligned}
			\int_{0}^{T} \!\!\!\int_{\Rd} & \abs{\grad W\nnun}^{2}\dx{x}\dx{t}
			= -\int_{0}^{T} \!\!\!\int_{\Rd}  W\nnun \Lap W\nnun \dx{x}\dx{t},                                           \\
			 &= - \int_{0}^{T} \!\!\!\int_{\Rd}^{} \underline{n}\nnun \Lap W\nnun
			\dx{x} \dx{t} + \int_{0}^{T} \!\!\! \int_{\Rd}^{} \left(\underline{n}\nnun-W\nnun\right)\Lap W\nnun \dx{x} \dx{t}, \\
			 & = - \int_{0}^{T} \!\!\!\int_{\Rd}^{} \underline{n}\nnun \Lap W\nnun
			\dx{x}\dx{t} - \nu \int_{0}^{T} \!\!\! \int_{\Rd}^{}\abs{\Lap W\nnun}^{2}\dx{x}\dx{t},                           \\
			 & \leq- \int_{0}^{T} \!\!\!\int_{\Rd}^{} \underline{n}\nnun \Lap W\nnun
			\dx{x}\dx{t}.
		\end{aligned}
	\end{equation*}
	Now, rearranging the entropy inequality, Lemma  \ref{lem:entropy_inequality}, we obtain
	\begin{equation*}
		\begin{aligned}
			- & \int_0^T \!\!\! \int_\Rd  \underline{n}\nnun
			  \Lap W\nnun \dx x \dx{t}\\
			 & \leq \mathcal{H}[\underline{n}_{N}^{\mathrm{in}}] -\mathcal{H}[\underline{n}\nnun](T) + \int_0^T \!\!\! \int_\Rd \log \underline{n}\nnun
			\int_{0}^{1} n\nnun G_{N}(\underline{n}\nnun;a)\dx{a} \dx x \dx t\leq C,
		\end{aligned}
	\end{equation*}
 which follows from the entropy bound in Proposition~\ref{lem:entropy-bounds}. Therefore, $\grad W\nnun $ is uniformly bounded in $ L^2(0,T;L^2(\Rd)) $.
	Next, we prove time regularity. For any $\varphi \in L^2(0,T;H^1(\Rd))$, we have
 \begin{align*}
     \abs*{\langle \partial_t W\nnun, \varphi \rangle} &= \abs*{\langle \partial_t \underline{n}\nnun, K_{\nu_N} \star \varphi \rangle} \\
     &\leq \norm{\partial_t \underline{n}\nnun}_{L^2(0,T; H^{-1}(\R))} \norm{K_{\nu_N} \star \varphi}_{L^2(0,T;H^1(\Rd))}\\
     &\leq C\norm{\varphi}_{L^2(0,T;H^1(\Rd))},
 \end{align*}
 so that $\partial_t W\nnun$ is uniformly bounded in $L^2(0,T;H^{-1}(\Rd))$.
	This gives us the local compactness of $ W\nnun $ in $ L^{2}(0,T,L^{2}(\Rd))
	$. Now we show that the second moment of $ W\nnun $ is bounded,
	\begin{equation*}
		\begin{aligned}
			\int_{0}^{T} \!\!\! \int_{\Rd} W\nnun \abs{x}^{2}  \dx{x} \dx{t}
			 & =  \int_{0}^{T} \!\!\! \int_{\Rd} \underline{n}\nnun \abs{x}^{2}  \dx{x} \dx{t} + \nu \int_{0}^{T} \!\!\! \int_{\Rd} \Lap W\nnun \abs{x}^{2}  \dx{x} \dx{t}, \\
			 & = \int_{0}^{T} \!\!\! \int_{\Rd} \underline{n}\nnun \abs{x}^{2}  \dx{x} \dx{t} + 2 d \nu \int_{0}^{T} \!\!\! \int_{\Rd}  W\nnun \dx{x} \dx{t},
		\end{aligned}
	\end{equation*}
	which is bounded by Proposition~\ref{lem:second_moment}. We thus deduce global compactness of $W\nnun$. Identifying the limit of $W\nnun$ as $\underline{n}\oinfty$ follows easily from the weak convergence $\underline{n}\nnun\rightharpoonup\underline{n}\oinfty$ and the representation $W\nnun = K_{\nu_N}\star\underline{n}\nnun$.
\end{proof}
\begin{proposition}
\label{prop:NuToZeroDensity}
	The rescaled total population density $ \underline{n}\nnun $ converges strongly to $ \underline{n}\oinfty $ in $ L^{2}(0,T;L^{2}(\Rd)).$
\end{proposition}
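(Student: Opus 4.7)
My plan is to exploit Brinkman's law to relate $\underline{n}\nnun$ to $W\nnun$ and leverage the strong convergence $W\nnun \to \underline{n}\oinfty$ in $L^2(0,T;L^2(\Rd))$ already established in Proposition~\ref{lem:Wton_invicid}. Specifically, it will suffice to show that $\underline{n}\nnun - W\nnun \to 0$ strongly in $L^2(0,T;L^2(\Rd))$; the strong convergence of $\underline{n}\nnun$ then follows by the triangle inequality.

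To this end, I will use the identity $\underline{n}\nnun - W\nnun = -\nu_N \Lap W\nnun$, which is a trivial rewriting of Brinkman's law. Multiplying this identity by $(\underline{n}\nnun - W\nnun)$, integrating in space and time, and performing an integration by parts on the $W\nnun\Lap W\nnun$ term, I obtain
\begin{equation*}
    \int_0^T \!\!\!\int_\Rd (\underline{n}\nnun - W\nnun)^2 \dx x\dx t + \nu_N \int_0^T \!\!\! \int_\Rd \abs{\grad W\nnun}^2 \dx x \dx t = -\nu_N\int_0^T\!\!\!\int_\Rd \underline{n}\nnun \Lap W\nnun \dx x \dx t.
\end{equation*}
Since the second term on the left-hand side is nonnegative, the key remaining step is bounding the right-hand side uniformly in $N$.

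For this I invoke the entropy inequality from Lemma~\ref{lem:entropy-ineq}, rearranged as
\begin{equation*}
    -\int_0^T \!\!\!\int_\Rd \underline{n}\nnun \Lap W\nnun \dx x \dx t \leq \curlyH[\underline{n}_N^{\mathrm{in}}] -\curlyH[\underline{n}\nnun](T) + \int_0^T\!\!\!\int_\Rd \log \underline{n}\nnun \int_0^1 n\nnun G_N(\underline{n}\nnun;a)\dx a \dx x \dx t.
\end{equation*}
Each term on the right is controlled uniformly in $N$: the initial entropy converges by Proposition~\ref{prop:Data_Strong_N}(3); the entropy at time $T$ is bounded in absolute value by combining the uniform $L^1\cap L^\infty$-bounds with Lemma~\ref{lem:entropy-bounds}; and the reaction integral is dominated by $\norm{G}_{L^\infty}\int_0^T\!\!\!\int_\Rd \underline{n}\nnun\abs*{\log \underline{n}\nnun}\dx x\dx t$, using $\abs*{\int_0^1 n\nnun G_N \dx a}\leq \norm{G}_{L^\infty}\,\underline{n}\nnun$, and hence also bounded by Lemma~\ref{lem:entropy-bounds}.

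Multiplying this bound by $\nu_N$ and using $\nu_N\to 0$ yields $\underline{n}\nnun - W\nnun\to 0$ in $L^2(0,T;L^2(\Rd))$, which together with Proposition~\ref{lem:Wton_invicid} completes the argument. The only delicate point is ensuring that all constants arising in the entropy bound remain independent of both $N$ and $\nu_N$ as they tend to their respective limits simultaneously; this is precisely why the uniform estimates in Section~\ref{sec:weak-sol-and-entropy} were formulated in that generality.
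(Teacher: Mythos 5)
Your proof is correct and follows essentially the same route as the paper's: split $\underline{n}\nnun - \underline{n}\oinfty$ via $W\nnun$, use the Brinkman identity and an integration by parts to obtain $\norm{\underline{n}\nnun - W\nnun}_{L^2}^2 + \nu_N\norm{\grad W\nnun}_{L^2}^2 = -\nu_N\int\int\underline{n}\nnun\Lap W\nnun$, and then control the right-hand side uniformly in $N$ by the entropy inequality together with the entropy bound of Lemma~\ref{lem:entropy-bounds}. The only cosmetic difference is that you test the pointwise Brinkman identity with $(\underline{n}\nnun-W\nnun)$, whereas the paper simply expands $\norm{\underline{n}\nnun - W\nnun}_{L^2}^2$ directly; both lead to the same inequality and the same invocation of the entropy estimates.
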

\begin{proof}
	Let us write
	\begin{equation*}
		\begin{aligned}
			\norm{\underline{n}\nnun - \underline{n}\oinfty}_{L^{2}(0,T;L^{2}(\Rd))} \leq
			\norm{\underline{n}\nnun - W\nnun}_{L^{2}(0,T;L^{2}(\Rd))} + \norm{\underline{n}\oinfty -W\nnun}_{L^{2}(0,T;L^{2}(\Rd))}.
		\end{aligned}
	\end{equation*}
	It only remains to estimate the first term:
	\begin{equation*}
		\begin{aligned}
			\norm{\underline{n}\nnun-W\nnun}_{L^{2}(0,T;L^{2}(\Rd))}^{2}
			 & = - \nu \int_{0}^{T} \!\!\! \int_{\Rd}^{} \left( \underline{n}\nnun -W\nnun \right) \Lap W\nnun  \dx{x} \dx{t}                                                          \\
			 & = - \nu \int_{0}^{T} \!\!\! \int_{\Rd}^{}  \underline{n}\nnun \Lap W\nnun  \dx{x} \dx{t} -  \nu \int_{0}^{T} \!\!\! \int_{\Rd}^{}  \abs{\grad W\nnun}^{2} \dx{x} \dx{t} \\
			 & \leq- \nu \int_{0}^{T} \!\!\! \int_{\Rd}^{}  \underline{n}\nnun \Lap W\nnun  \dx{x} \dx{t}.
		\end{aligned}
	\end{equation*}
	Applying the entropy inequality, Lemma \ref{lem:entropy_inequality}, as before we get
	\begin{equation*}
		\begin{aligned}
			- \nu \int_{0}^{T} \!\!\! \int_{\Rd}  \underline{n}\nnun \Lap W\nnun  \dx{x} \dx{t} \leq C\nu_N.
		\end{aligned}
	\end{equation*}
	Hence,
	\begin{equation*}
		\begin{aligned}
			\norm{\underline{n}\nnun - \underline{n}\oinfty}_{L^{2}(0,T;L^{2}(\Rd))} \leq
			C\sqrt{\nu_N} + \norm{\underline{n}\oinfty -W\nnun}_{L^{2}(0,T;L^{2}(\Rd))},
		\end{aligned}
	\end{equation*}
	which goes to zero as $N \to \infty $ using Lemma~\ref{lem:Wton_invicid}.
\end{proof}

\subsection{The limit equation for the rescaled total population density}

Having obtained the strong compactness of $ \underline{n}\nnun $ and the weak compactness of $\nabla W\nnun $, we can now pass to the limit in Eq.~\eqref{eq:Brinkman_average} to obtain, in the weak sense,
\begin{equation}
\label{eq:Darcy_average}
	\frac{\partial\underline{n}\oinfty}{\partial t}  - \div(\underline{n}\oinfty \grad \underline{n}\oinfty) = \ds \int_{0}^{1}  n\oinfty G(\underline{n}\oinfty;a)\dx{a}.
\end{equation}
Testing the limiting equation by $\log(\underline{n}_{0,\infty}+ \delta) \phi$ and following an argument similar to the derivation of the entropy inequality, we obtain
\begin{equation}\label{eq:EntropyDarcyPhenotype}
	\begin{aligned}
		\mathcal{H}[\underline{n}\oinfty](T) - \mathcal{H}[\underline{n}^{\mathrm{in}}]
		 & + \int_0^T \!\!\! \int_\Rd  \abs{\grad \underline{n}\oinfty}^{2}   \dx x \dx t \\
		 & = \int_0^T \!\!\! \int_\Rd \log \underline{n}\oinfty
		\int_{0}^{1} n\oinfty G_{N}(\underline{n}\oinfty;a)\dx{a} \dx x \dx t.
	\end{aligned}
\end{equation}

\subsection{Strong convergence of the velocity}
In this subsection we will show that the $L^2$-norm of the velocity $\nabla W\nnun$ converges to the $L^2$-norm of the limit velocity $\nabla \underline{n}\oinfty$. Combining this fact with the weak convergence of the velocities from Lemma~\ref{lem:Wton_invicid}, we will deduce strong convergence.

We can compare the entropy equality, Eq. \eqref{eq:EntropyDarcyPhenotype}, and the entropy inequality,. Eq. \eqref{eq:entropy_inequality}, to deduce
\begin{equation} \label{eq:Entropy_E_I_comp}
	\begin{aligned}
		-\int_{0}^{T} \!\!\! \int_{\Rd} & \underline{n}\nnun  \Lap W\nnun   \dx{x} \dx{t}\\
        &\leq \int_{0}^{T} \!\!\! \int_{\Rd}^{}  \abs*{\grad  \underline{n}\oinfty}^{2}\dx{x}\dx{t}
		  +\mathcal{H}[\underline{n}\oinfty](T) -\mathcal{H}[\underline{n}\nnun](T)             \\
		 &\quad +\mathcal{H}[n_N^{\mathrm{in}}] -\mathcal{H}[n^{\mathrm{in}}] +\mathrm{React}(n\nnun,\underline{n}\nnun) -\mathrm{React}(n\oinfty,\underline{n}\oinfty),
	\end{aligned}
\end{equation}
where
\begin{equation*}
	\mathrm{React}(n\nnun,\underline{n}\nnun) \coloneqq  \int_0^T \!\!\! \int_\Rd \log \underline{n}\nnun
	\int_{0}^{1} n\nnun G_{N}(\underline{n}\nnun;a)\dx{a} \dx x \dx t.
\end{equation*}

We now wish to pass to the limit $N\to\infty$ in~\eqref{eq:Entropy_E_I_comp}. To this end, we investigate the terms on the right-hand side in pairs.
From Proposition~\ref{prop:Data_Strong_N} we have
\begin{equation}
\label{eq:InitialEntropyConvergence2}
    \lim_{N\to\infty} \mathcal{H}[n_N^{\mathrm{in}}] = \mathcal{H}[n^{\mathrm{in}}].
\end{equation}

To compare the final-time entropies, we first state the following proposition.

\begin{proposition}\label{prop:FinalTimeL^1}
    Up to extracting a subsequence it holds that $\underline n\nnun(T) \rightharpoonup \underline{n}\oinfty(T)$ weakly in $L^1(\Rd)$.
\end{proposition}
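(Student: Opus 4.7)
The plan is to extract a weakly convergent subsequence of $(\underline{n}_{\nu_N,N}(T))_N$ in $L^1(\Rd)$ via the Dunford--Pettis theorem, and then identify the weak limit with $\underline{n}_{0,\infty}(T)$ by passing to the limit in the integrated-by-parts form of Eq.~\eqref{eq:Brinkman_average} tested with time-independent functions.

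First I would invoke Dunford--Pettis. The uniform bound
$\sup_N \norm{\underline{n}_{\nu_N,N}(T)}_{L^1(\Rd,\,(1+|x|^2)\dx x)}\leq C$
from Remark~\ref{rmk:ContinuityL^1} supplies equi-tightness, while the uniform $L^\infty$-control from Lemma~\ref{lem:Existence} provides equi-integrability. Hence, along a subsequence (not relabelled), $\underline{n}_{\nu_N,N}(T)\rightharpoonup \mu$ weakly in $L^1(\Rd)$ for some nonnegative $\mu\in L^1(\Rd)$.

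To identify $\mu$, I fix $\phi\in C_c^\infty(\Rd)$ and use that Proposition~\ref{prop:TimeContinuity}, applied with $\nu=\nu_N>0$, gives $\underline{n}_{\nu_N,N}\in C([0,T];L^2(\Rd))$, allowing me to test Eq.~\eqref{eq:Brinkman_average} with the time-independent $\phi$ to obtain
\begin{align*}
\int_{\Rd} \underline{n}_{\nu_N,N}(T)\,\phi\dx x - \int_{\Rd} \underline{n}_{\nu_N,N}^{\mathrm{in}}\,\phi\dx x
&= \int_0^T\!\!\!\int_{\Rd} \underline{n}_{\nu_N,N}\,\nabla W_{\nu_N,N}\cdot\nabla\phi\dx x\dx t \\
&\quad + \int_0^T\!\!\!\int_{\Rd} \phi \int_0^1 n_{\nu_N,N}\,G_N(\underline{n}_{\nu_N,N};a)\dx a\dx x\dx t.
\end{align*}
Letting $N\to\infty$: the LHS tends to $\int\mu\phi - \int \underline{n}^{\mathrm{in}}\phi$ thanks to the Dunford--Pettis step and Proposition~\ref{prop:Data_Strong_N}(2); the transport integral converges as a product of the strongly $L^2(0,T;L^2(\Rd))$-convergent factor $\underline{n}_{\nu_N,N}\,\nabla\phi$ (Proposition~\ref{prop:NuToZeroDensity}) against the weakly $L^2$-convergent factor $\nabla W_{\nu_N,N}\rightharpoonup \nabla \underline{n}_{0,\infty}$ (Proposition~\ref{lem:Wton_invicid}); the reaction integral passes to the limit by combining the strong $L^2$-convergence of $\underline{n}_{\nu_N,N}$ with Proposition~\ref{prop:Data_Strong_N}(5) and dominated convergence.

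It then remains to write the analogous identity for the limit. The function $\underline{n}_{0,\infty}$ satisfies Eq.~\eqref{eq:Darcy_average} weakly; combining $\underline{n}_{0,\infty}\in L^2(0,T;H^1(\Rd))\cap L^\infty$ with $\partial_t \underline{n}_{0,\infty}\in L^2(0,T;H^{-1}(\Rd))$, which follows directly from the equation, standard parabolic theory yields $\underline{n}_{0,\infty}\in C([0,T];L^2(\Rd))$, so that $\underline{n}_{0,\infty}(T)$ is well-defined and testing the limit equation with $\phi$ produces exactly the identity obtained above but with $\underline{n}_{0,\infty}(T)$ in place of $\mu$. Comparing the two relations gives $\int\mu\,\phi = \int \underline{n}_{0,\infty}(T)\,\phi$ for every $\phi\in C_c^\infty(\Rd)$, hence $\mu = \underline{n}_{0,\infty}(T)$ almost everywhere. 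The main obstacle is justifying the trace at $t=T$ on both sides: for the pre-limit this is provided by Proposition~\ref{prop:TimeContinuity} (its possibly $\nu$-dependent continuity modulus is harmless since $N$ is held fixed when writing the identity), and for the limit it rests on the parabolic regularity inherited from Eq.~\eqref{eq:Darcy_average}.
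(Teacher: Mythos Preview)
Your proposal is correct and follows essentially the same route as the paper: extract a weak $L^1$ limit via Dunford--Pettis (using the second-moment bound and the uniform $L^\infty$-control), then identify it by comparing the weak formulation of Eq.~\eqref{eq:Brinkman_average} with that of Eq.~\eqref{eq:Darcy_average}, both tested against a time-independent $\phi\in C_c^\infty(\Rd)$. The only cosmetic difference is that the paper justifies the trace at $t=T$ on each side by inserting a smooth time cutoff $\xi_\delta(t)\phi(x)$ and sending $\delta\to 0$, whereas you invoke $C([0,T];L^2)$-continuity directly (Proposition~\ref{prop:TimeContinuity} for the pre-limit and the $L^2H^1\cap H^1H^{-1}$ embedding for the limit); both are equivalent standard devices.
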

\begin{proof}
In light of Remark \ref{rmk:ContinuityL^1} and the uniform $L^2$-control of $\|\underline{n}_{\nu_N, N}(T)\|_{L^2(\Rd)}$, we may apply the Dunford-Pettis theorem to infer the existence of some $\chi \in L^1(\Rd)$ and a subsequence such that $\underline{n}_{\nu_N, N} \rightharpoonup \chi$, as $N\to \infty$. This limit can be identified using the equation for $\underline{n}\nnun$. 
Let $\delta>0$ and let $\xi_\delta$ be a sequence of smooth nonnegative functions approximating the continuous nonnegative piecewise linear function
\begin{equation*}
	\xi_{\delta}(s):= \begin{cases}
	1 & \text{for\;} s\in[0,T-2\delta),\\
	\frac{(T-\delta)-s}{\delta} & \text{for\;} s\in  [T-2\delta,T-\delta),\\
	0 & \text{for\;} s\in (T-\delta,T].
	\end{cases}
\end{equation*}
Then, taking $\phi \in C_c^\infty(\Rd)$ and using $\xi_\delta(t)\phi(x)$ as test function in the weak formulation of System~\eqref{eq:Brinkman_average}, we have
	\begin{align*}
		\int_0^T\!\!\!  \int_\Rd & \underline n\nnun\,\xi_\delta'\,\phi \dx x \dx t
		- \int_0^T \xi_\delta(t) \int_\Rd \underline n\nnun \nabla W\nnun \cdot \nabla \phi \dx x \dx t
		\\
		& = -\int_0^T \xi_\delta(t) \int_\Rd\int_0^1 \phi n\nnun G_{N}(\underline{n}\nun; a) \dx a \dx x \dx t -\int_\Rd \phi(x)\underline n^{\mathrm{in}}_{N}(x) \dx x.
	\end{align*}
Passing to the limit $\delta\to 0$, we obtain
	\begin{align*}
		\int_\Rd & \underline n\nnun(T)\phi \dx x
		- \int_0^T \int_\Rd \underline n\nnun \nabla W\nnun \cdot \nabla \phi \dx x \dx t
		\\
		& = -\int_0^T \int_\Rd\int_0^1 \phi n\nnun G_{N}(\underline{n}\nun; a) \dx a \dx x \dx t -\int_\Rd \phi(x)\underline n^{\mathrm{in}}_{N}(x) \dx x,
	\end{align*}
and then passing to the limit $N\to\infty$, we have
	\begin{align*}
		\int_\Rd & \chi\phi \dx x
		- \int_0^T \int_\Rd \underline n\oinfty \nabla \underline{n}\oinfty \cdot \nabla \phi \dx x \dx t
		\\
		& = -\int_0^T \int_\Rd\int_0^1 \phi n\oinfty G(\underline{n}\oinfty; a) \dx a \dx x \dx t -\int_\Rd \phi(x)\underline n^{\mathrm{in}}(x) \dx x,
	\end{align*}
where we have used Proposition~\ref{prop:Data_Strong_N}, Proposition~\ref{prop:NuToZeroDensity} and the weak convergences $\nabla W\nnun\rightharpoonup W\oinfty$, and $\underline{n}_{\nu_N, N}(T) \rightharpoonup \chi$.
However, testing the weak form of the Darcy system~\eqref{eq:Darcy_average} with the same test function $\xi_\delta(t)\phi(x)$ and passing to the limit $\delta\to 0$, we obtain
	\begin{align*}
		\int_\Rd & \underline{n}\oinfty(T)\phi \dx x
		- \int_0^T \int_\Rd \underline n\oinfty \nabla \underline{n}\oinfty \cdot \nabla \phi \dx x \dx t
		\\
		& = -\int_0^T \int_\Rd\int_0^1 \phi n\oinfty G(\underline{n}\oinfty; a) \dx a \dx x \dx t -\int_\Rd \phi(x)\underline n^{\mathrm{in}}(x) \dx x.
	\end{align*}
Juxtaposing the last two equalities, we easily deduce that $\chi = \underline{n}\oinfty(T)$ a.e.\ in $\Rd$.
\end{proof}

By convexity of the entropy functional $ \mathcal{H}[\cdot]$ and Proposition~\ref{prop:FinalTimeL^1}, we have
\begin{equation}               
\label{eq:FinalEntropyConvergence}
		\limsup_{\nu \to 0}^{} \left( \mathcal{H}[\underline{n}\oinfty](T) - \mathcal{H}[\underline{n}\nnun](T) \right) \leq 0,
\end{equation}

It remains to treat the reaction terms.
By the uniform $L^1$-control on $\underline{n}\nnun\log\underline{n}\nnun$ from Proposition~\ref{lem:entropy-bounds}, for every $ \delta > 0 $  there exists a compact set $ K=K_\delta \subset \Rd $, such that
\begin{equation*}
    \sup_{N \geq 2}\, \int_0^T \!\!\! \int_{K^{c}}\underline{n}\nnun \abs*{\log \underline{n}\nnun} \dx x \dx t \leq \delta.
\end{equation*}
Then, since the growth rate is uniformly bounded,
\begin{equation*}
    \abs*{\int_0^T \!\!\! \int_{K^c} \log \underline{n}\nnun
	\int_{0}^{1} n\nnun G_{N}(\underline{n}\nnun;a)\dx{a} \dx x \dx t} \leq C\delta,
\end{equation*}
for each $N \geq 2$ as well as for the limit quantity at $N=\infty$.

For the integral on $ K $, we apply Egorov's Theorem to find $ A\subset K $ with $ \mu(A) \leq \delta $ such that $ \underline{n}\nnun \to \underline{n}\oinfty $ uniformly in $ K\setminus A $. 
Let $ 0 < \beta < 1/e  $. By uniform convergence, on $K\setminus A$ we have (for $N$ large enough)
\begin{equation*}
	\begin{aligned}
		\underline{n}\oinfty \geq \frac{\beta}{2} \implies \underline{n}\nnun \geq \frac{\beta}{4}\quad\text{and}\quad
		\underline{n}\oinfty < \frac{\beta}{2} \implies \underline{n}\nnun < \beta.
	\end{aligned}
\end{equation*}
We now partition the compact domain $ K $ as follows,
\begin{equation*}
	K = (K \setminus A \cap \set{\underline{n}\oinfty\geq\beta/2}) \cup (K \setminus A \cap \set{\underline{n}\oinfty<\beta/2}) \cup A \equiv K_{1} \cup K_{2} \cup A.
\end{equation*}
On $K_{1}$ the sequence $\log{\underline{n}\nnun}$ converges strongly in $L^p$. Thus, together with the weak convergence of $n\nnun$ and the strong convergence of $G_N(\underline{n}\nnun,a)$ (cf.~Proposition~\ref{prop:Data_Strong_N}), we readily obtain
\begin{equation*}
	\begin{aligned}
	\int_0^T \!\!\! \int_{K_{1}}\log \underline{n}\nnun &\int_{0}^{1} n\nnun G_{N}(\underline{n}\nnun;a)\dx{a} \dx x \dx t\\
    &\to \int_0^T \!\!\! \int_{K_{1}}\log \underline{n}\oinfty \int_{0}^{1} n\oinfty G(\underline{n}\oinfty;a)\dx{a} \dx x \dx t.
	\end{aligned}
\end{equation*}
On the set $ K_{2} $, using the uniform bound for $ G_{N} $ and $
	\underline{n}\nnun < \beta $, we have
\begin{equation*}
		\bigg|\int_0^T \!\!\! \int_{K_{2}}\log \underline{n}\nnun
		\int_{0}^{1} n\nnun G_{N}(\underline{n}\nnun;a)\dx{a} \dx x \dx t\bigg|
		\leq C\max_{[0,1]\times [0,\bar{n}]}\abs*{G(n;a)} \mu(K)\beta \abs*{\log\beta}.
\end{equation*}
Lastly, on the set $ A $
\begin{equation*}
	\begin{aligned}
		\bigg| \int_0^T \!\!\! \int_{A}\log \underline{n}\nnun \int_{0}^{1} n\nnun G_{N}(\underline{n}\nnun;a)\dx{a} \dx x \dx t\bigg| \leq C \delta.
	\end{aligned}
\end{equation*}
Let us stress the the last two bounds hold for $N\geq 2$ and for $N=\infty$.
Combining all the above results, we conclude that
\begin{equation*}
	\begin{aligned}
		&\lim_{N\to\infty}\abs*{\mathrm{React}(n\nnun,\underline{n}\nnun) - \mathrm{React}(n\oinfty,\underline{n}\oinfty)}
		  \leq C(\delta + \mu(K_\delta)\beta|\log\beta|).
	\end{aligned}
\end{equation*}
Sending $\beta\to 0$ and then $\delta\to0$, we deduce that
\begin{equation}
\label{eq:ReactionTermsConvergence}
    \lim_{N\to\infty}\mathrm{React}(n\nnun,\underline{n}\nnun) = \mathrm{React}(n\oinfty,\underline{n}\oinfty).
\end{equation}

Finally, let us recall from the proof of Proposition~\ref{lem:Wton_invicid} that
\begin{equation}
\label{eq:Auxiliary}
		-\int_{0}^{T} \!\!\! \int_{\Rd} \underline{n}\nnun  \Lap W\nnun \dx{x} \dx{t}
		\geq\int_{0}^{T} \!\!\! \int_{\Rd} \abs*{\grad  W\nnun}^{2}  \dx{x} \dx{t}.
\end{equation}

Therefore, using~\eqref{eq:Auxiliary}, and results~\eqref{eq:InitialEntropyConvergence2},~\eqref{eq:FinalEntropyConvergence}, and~\eqref{eq:ReactionTermsConvergence}, we can pass to the limit $N\to\infty$ in Eq.~\eqref{eq:Entropy_E_I_comp} to obtain
\begin{equation*}
	\begin{aligned}
		\limsup_{\nu \to 0}^{}    \int_{0}^{T} \!\!\! \int_{\Rd} \abs*{\grad  W\nnun}^{2}  \dx{x} \dx{t} \leq \int_{0}^{T} \!\!\! \int_{\Rd}^{}  \abs*{\grad  \underline{n}\oinfty}^{2}\dx{x}\dx{t},
	\end{aligned}
\end{equation*}
combining which with the weak convergence $\nabla W\nnun\rightharpoonup \nabla n\oinfty$, yields strong convergence of the velocity $\nabla W\nnun$.

\subsection{Conclusion of the proof}
Using the weak convergence~\eqref{eq:inter_mean_inviscid} of the interpolated density $n\nnun$, the strong convergence of the rescaled population density $\underline{n}\nnun$, the strong convergence of the velocity $\nabla W\nnun\to\nabla\underline{n}\oinfty$, and the results of Proposition~\ref{prop:Data_Strong_N} (for the convergence of the initial data and the reaction term), it is straightforward to pass to the limit in the weak formulation of System~\eqref{eq:Brinkman_a} to obtain that $n\oinfty$ satisfies System~\eqref{eq:Darcy_Phenotype}, in the weak sense, with velocity $\nabla\underline{n}\oinfty$. Thus, the proof of Theorem~\ref{thm:JointLimit} is complete. \qed

\section{The limit $N\to \infty$}
\label{sec:continuous-phenotype}
This section is devoted to proving Theorem~\ref{thm:PhenotypeLimit}. To this end, let $\nu>0$ be fixed and let $(n\i\nun, W\nun)_{i=1}^N$ be the weak solution constructed in Lemma~\ref{lem:Existence} as the limit of the approximate sequence $(n\i\epsnun, W\epsnun)_{i=1}^N$.
With the uniform bounds guaranteed by Lemma~\ref{lem:Existence}, we can extract a subsequence such that
\begin{equation*}
	n\nun(\cdot,\cdot\,;a) \stackrel{\star}{\rightharpoonup} n_{\nu,\infty}(\cdot,\cdot\,;a)
\end{equation*}
weakly$-*$ in $L^\infty(0,T;L^p(\Rd))$ for any $p\in[1,\infty]$.
Defining $\underline{n}_{\nu,\infty}:=\int_0^1 n_{\nu,\infty} \dx a$, we immediately obtain that $\underline{n}\nun$ converges weakly to $\underline{n}_{\nu,\infty}$ in the same range of topologies.

Now define $W_{\nu,\infty}$ as the solution to the Brinkman equation
\begin{equation*}
	-\nu\Delta W_{\nu,\infty} + W_{\nu,\infty} = \underline{n}_{\nu,\infty}.
\end{equation*}
Clearly $W\nun$ converges weakly to $W_{\nu,\infty}$.

To pass to the limit $N\to\infty$ in the weak formulation of System~\eqref{eq:Brinkman_a}, we will demonstrate that the sequences $(\underline{n}\nun)_N$ and $(\nabla W\nun)_N$ converge strongly.
\begin{proposition}
	\label{prop:W_Strong_N}
	The sequence $(W\nun)_N$ converges, as $N\to\infty$, to $W_{\nu,\infty}$ strongly in $L^1(0,T;L^1(\R^d))$.
\end{proposition}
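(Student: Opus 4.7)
The plan is to mimic the argument from Proposition~\ref{lem:Wton_invicid} (the strong convergence of $W_{\nu_N,N}$ in the joint-limit section), but adapted to the present setting where $\nu>0$ is fixed and only $N\to\infty$. The target is to prove compactness of $(W\nun)_N$ in $L^1(0,T;L^1(\Rd))$ and then identify the limit as $W_{\nu,\infty}=K_\nu\star\underline{n}_{\nu,\infty}$.

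First I would establish local compactness of $(W\nun)_N$ in $L^1(0,T;L^1_{\mathrm{loc}}(\Rd))$ via the Aubin--Lions lemma. For the space regularity, Lemma~\ref{lem:Existence}(iv) gives a uniform-in-$N$ bound of $W\nun$ in $L^\infty(0,T;W^{1,q}(\Rd))$ for any $q\in[1,\infty]$. For the time regularity, I would use the representation $W\nun = K_\nu\star\underline{n}\nun$, so that $\partial_t W\nun = K_\nu\star\partial_t\underline{n}\nun$. Since $K_\nu\in L^1(\Rd)$ (in fact $\|K_\nu\|_{L^1}=1$) and, by Proposition~\ref{prop:TimeDerivative}, $\partial_t\underline{n}\nun$ is uniformly bounded in $L^2(0,T;H^{-1}(\Rd))$ (after passing this bound through the $\epsilon\to 0$ limit, which is lower-semicontinuous for weak convergence), convolution with $K_\nu$ preserves this bound and yields $\partial_t W\nun$ uniformly bounded in $L^2(0,T;H^{-1}(\Rd))$.

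Next, to upgrade local compactness to global compactness I would prove uniform tightness through a second-moment bound. Testing the Brinkman equation $-\nu\Delta W\nun+W\nun=\underline{n}\nun$ against $\tfrac12|x|^2$ and integrating by parts yields
\begin{equation*}
    \frac12\int_\Rd |x|^2 W\nun\dx x = \frac12\int_\Rd |x|^2\underline{n}\nun\dx x + \nu d\int_\Rd W\nun\dx x,
\end{equation*}
and the right-hand side is uniformly bounded in $N$ by Proposition~\ref{lem:second_moment} (which passes to the limit $\epsilon\to 0$ as well) and by the uniform $L^1$-bound on $W\nun$ coming from Lemma~\ref{lem:Existence}(iii). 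Combining the local compactness with this uniform tightness gives compactness of $(W\nun)_N$ in $L^1(0,T;L^1(\Rd))$, hence strong convergence along a subsequence to some limit $\widetilde W$.

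Finally I would identify $\widetilde W = W_{\nu,\infty}$. Since $\underline{n}\nun\stackrel{\star}{\rightharpoonup}\underline{n}_{\nu,\infty}$ in $L^\infty(0,T;L^p(\Rd))$ for any $p\in[1,\infty]$ and $K_\nu\in L^1\cap L^\infty$, we have $W\nun = K_\nu\star\underline{n}\nun\rightharpoonup K_\nu\star\underline{n}_{\nu,\infty} = W_{\nu,\infty}$, say in the sense of distributions, so the limit is uniquely determined and the whole sequence (not just a subsequence) converges. I do not expect any serious obstacle: the smoothing effect of the convolution with $K_\nu$ is precisely what makes this step much easier than the corresponding joint-limit statement, where $\nu_N\to 0$ degenerated $K_{\nu_N}$ and forced one to rely on the entropy dissipation to obtain strong compactness.
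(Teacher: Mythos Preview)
Your proposal is correct and follows essentially the same approach as the paper: Aubin--Lions for local compactness (gradient bound from the Brinkman equation, time-derivative bound via $\partial_t W\nun = K_\nu\star\partial_t\underline{n}\nun$), then the second-moment estimate for equi-tightness, and finally identification of the limit through the weak convergence of $\underline{n}\nun$. The paper points to Proposition~\ref{prop:W_eps_Strong} rather than Proposition~\ref{lem:Wton_invicid} as the template, and cites Corollary~\ref{cor:EpsConvergence} directly for the $H^{-1}$-bound on $\partial_t\underline{n}\nun$, but these are cosmetic differences.
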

\begin{proof}
	This convergence is obtained similarly as in Proposition~\ref{prop:W_eps_Strong}. The gradient $\nabla W\nun$ is uniformly bounded in $L^\infty(0,T;L^p(\Rd))$ for any $1\leq p \leq \infty$, the time derivative $\partial_t W\nun = \partial_t \underline{n}\nun \star K_\nu$ is uniformly bounded in $L^2(0,T;H^{-1}(\Rd))$ using the uniform bound on $\partial_t \underline{n}\nun$ from Corollary~\ref{cor:EpsConvergence}, and the second-moment control in Proposition~\ref{lem:second_moment} provides equi-tightness.
\end{proof}

\begin{proposition}
	The sequence $(\nabla W\nun)_N$ converges to $\nabla W_{\nu,\infty}$ strongly in $L^2(0,T;L^2_{\mathrm{loc}}(\Rd))$.
\end{proposition}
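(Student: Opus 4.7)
The plan is to run a direct energy estimate on the difference of the two Brinkman potentials, exploiting the linearity of the operator $-\nu\Delta+\mathrm{Id}$. Define $V_N:=W\nun-W_{\nu,\infty}$. Since $W\nun$ solves Brinkman with source $\underline{n}\nun$ and $W_{\nu,\infty}$ was defined as $K_\nu\star\underline{n}_{\nu,\infty}$, subtracting the two equations yields
\begin{equation*}
    -\nu\Delta V_N + V_N = \underline{n}\nun - \underline{n}_{\nu,\infty},
\end{equation*}
in $\Rd\times(0,T)$.

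I would then multiply by $V_N$ and integrate in space. The regularity $V_N\in L^\infty(0,T;W^{1,q}(\Rd)\cap W^{2,r}(\Rd))$ furnished by Lemma~\ref{lem:Existence}, together with decay at infinity of $V_N=K_\nu\star(\underline{n}\nun-\underline{n}_{\nu,\infty})$ inherited from the uniform second-moment control in Proposition~\ref{lem:second_moment}, justifies the integration by parts with vanishing boundary terms. Integrating the resulting identity over $(0,T)$ gives
\begin{equation*}
    \nu\int_0^T\!\!\!\int_\Rd |\nabla V_N|^2\dx x\dx t + \int_0^T\!\!\!\int_\Rd V_N^2\dx x\dx t = \int_0^T\!\!\!\int_\Rd V_N(\underline{n}\nun-\underline{n}_{\nu,\infty})\dx x\dx t.
\end{equation*}

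The right-hand side is then estimated by Hölder's inequality using the $L^1$--$L^\infty$ pairing: it is dominated by
\begin{equation*}
    \|V_N\|_{L^1(0,T;L^1(\Rd))}\,\|\underline{n}\nun-\underline{n}_{\nu,\infty}\|_{L^\infty(0,T;L^\infty(\Rd))}.
\end{equation*}
The second factor is uniformly bounded by $2\bar n$ thanks to the $L^\infty$-control from Lemma~\ref{lem:Existence}, while the first factor vanishes as $N\to\infty$ by Proposition~\ref{prop:W_Strong_N}. Since $\nu>0$ is fixed, one concludes that $\nabla V_N\to 0$ strongly in $L^2(0,T;L^2(\Rd))$, which is actually stronger than the locally-$L^2$ convergence claimed in the statement. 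No significant obstacle arises: the only mild technical point is justifying the integration by parts globally on $\Rd$, but this is routine given the available $W^{2,r}$-regularity combined with the decay inherited from the finite second moments.
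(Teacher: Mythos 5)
Your proof is correct, but it takes a genuinely different route from the paper's. The paper invokes the Aubin--Lions lemma directly on $\nabla W_{\nu,N}$: from the Brinkman equation $D^2 W_{\nu,N}$ is bounded in $L^2(0,T;L^2(\Rd))$ (uniformly since $\nu$ is fixed), while $\partial_t\,\partial_{x_k} W_{\nu,N} = \partial_{x_k}K_\nu\star\partial_t\underline{n}_{\nu,N}$ is controlled via the uniform $L^2(0,T;H^{-1})$-bound on $\partial_t\underline{n}_{\nu,N}$ from Corollary~\ref{cor:EpsConvergence}; this yields local $L^2$-compactness. You instead write the difference equation for $V_N := W_{\nu,N}-W_{\nu,\infty}$, do an energy estimate, and estimate the right-hand side by Hölder with the $L^1$--$L^\infty$ pairing, using the already-established $L^1(0,T;L^1(\Rd))$-convergence of $W_{\nu,N}$ from Proposition~\ref{prop:W_Strong_N} together with the uniform $L^\infty$-bound $0\leq\underline{n}_{\nu,N}\leq\bar n$ (and hence $\underline{n}_{\nu,\infty}\leq\bar n$ by weak-$\star$ convergence). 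Your approach is more elementary — pure linearity of the Brinkman operator plus an energy identity, no compactness lemma — and it yields the stronger \emph{global} $L^2(0,T;L^2(\Rd))$-convergence of $\nabla V_N$, whereas the paper only asserts $L^2_{\mathrm{loc}}$. The justification of the global integration by parts is indeed routine given the $W^{2,r}$-regularity from Lemma~\ref{lem:Existence}(iv). One small caveat worth stating explicitly is that both routes ultimately rely on the same time-derivative bound on $\underline{n}_{\nu,N}$; your proof pushes that dependence into Proposition~\ref{prop:W_Strong_N}, which itself uses an Aubin--Lions-type argument, so the reliance on a compactness lemma is shifted rather than removed — but that is perfectly legitimate, and the resulting argument is arguably cleaner.
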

\begin{proof}
	This follows from the Aubin-Lions lemma. From the Brinkman equation, we have $D^2 W\nun$ bounded in $L^2(0,T;L^2(\Rd))$. Since $\nu$ is fixed, the bound for the time derivative is obtained from $\partial_t\prt{\partial_{x_k} W\nun} = \partial_{x_k}K_\nu \star \partial_t \underline{n}\nun$ and Corollary~\ref{cor:EpsConvergence} again.
\end{proof}

\subsection{Compactness of the rescaled total population density}
We now revisit the proof of Proposition~\ref{prop:n_epsnu_compact} to deduce strong convergence of the rescaled total population density.
\begin{proposition}
	\label{prop:Strong_N}
	The sequence $(\underline{n}\nun)_N$ converges strongly in $L^1(\Rd\times(0,T))$.
\end{proposition}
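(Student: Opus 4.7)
The plan is to revisit the compactness argument from Proposition~\ref{prop:n_epsnu_compact}, but this time applied to the weak solutions $n\nun$ directly in the limit $N\to\infty$ (rather than $\epsilon\to0$), with the parameter $\nu>0$ fixed. Since the equation for $n\nun$ satisfies Definition~\ref{def:wk_sol_brinkman}, it is a bona fide transport-reaction equation with velocity $\nabla W\nun$ satisfying $D^2 W\nun \in L^\infty(0,T;L^2(\Rd))$ uniformly in $N$ (thanks to Lemma~\ref{lem:Existence}(iv)), which is precisely the setting in which the Belgacem--Jabin commutator framework applies. The advantage over Proposition~\ref{prop:n_epsnu_compact} is that the $\epsilon$-dependent term $C\epsilon/h^2$ is now absent, which removes the balancing issue between $\epsilon$ and $h$.

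I would define the phenotype-averaged compactness quantities
\begin{equation*}
Q_h(t) := \int_{\R^{2d}} \calK_h(x-y)\int_0^1 \abs*{n\nun(x) - n\nun(y)}\dx a\dx x\dx y,\qquad \underline{Q}_h(t) := \int_{\R^{2d}} \calK_h(x-y)\abs*{\underline{n}\nun(x) - \underline{n}\nun(y)}\dx x\dx y,
\end{equation*}
and derive a differential inequality for $Q_h + \underline{Q}_h$ by proceeding as in the derivation after Eq.~\eqref{eq:Compactness_Gronwall}, but without the $\epsilon$-Laplacian terms. Concretely, one mimics the six-term splitting $\mathcal I_1 + \dots + \mathcal I_6$ applied to the pair of equations for $n\nun$ and $\underline{n}\nun$: the transport-type terms are handled by integration by parts against $\calK_h$; the Brinkman coupling is exploited exactly as in Proposition~\ref{prop:n_epsnu_compact} to bound $\mathcal I_2$ by $W\nun$-differences; the growth terms are controlled using (\ref{hyp:G2}) and the uniform $L^\infty$-bound on the densities; and the commutator term $\mathcal I_3$ is handled by~\cite[Proposition~13]{BelgacemJabin2}, invoking the uniform bound on $\norm{D^2 W\nun}_{L^\infty(0,T;L^2(\Rd))}$. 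The result will be
\begin{equation*}
\ddt\brk*{Q_h(t) + \underline{Q}_h(t)} \leq C\abs*{\log h}^{1/2} + C\brk*{Q_h(t) + \underline{Q}_h(t)} + C\int_{\R^{2d}} \calK_h(x-y)\abs*{W\nun(x) - W\nun(y)}\dx x\dx y,
\end{equation*}
with constants uniform in $N$ (but possibly dependent on $\nu>0$).

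Applying Gronwall's lemma, multiplying by $\abs*{\log h}^{-1}$, and passing to the limit $N\to\infty$ and then $h\to 0$, the first term vanishes, the initial-data term vanishes by the strong convergence $\underline{n}^{\mathrm{in}}_N \to \underline{n}^{\mathrm{in}}$ in $L^1$ from Proposition~\ref{prop:Data_Strong_N} (together with an analogous convergence for $n^{\mathrm{in}}_N$), and the residual $W\nun$-term vanishes because of the strong convergence of $W\nun$ in $L^1(0,T;L^1(\Rd))$ established in Proposition~\ref{prop:W_Strong_N} (which implies the corresponding Belgacem--Jabin compactness quantity tends to zero by the converse part of Lemma~\ref{lem:CompactnessCriterion}). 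This yields
\begin{equation*}
\lim_{h\to 0}\,\limsup_{N\to\infty}\,\abs*{\log h}^{-1}\sup_{t\in[0,T]}\underline{Q}_h(t) = 0,
\end{equation*}
whence Lemma~\ref{lem:CompactnessCriterion} gives local compactness of $(\underline{n}\nun)_N$ in $L^1(0,T;L^1_{\mathrm{loc}}(\Rd))$. Global compactness on all of $\R^d\times(0,T)$ follows from the uniform second-moment bound (inherited from Proposition~\ref{lem:second_moment} through the $\epsilon\to 0$ limit) which provides equi-tightness.

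The main obstacle I anticipate is ensuring that the differential inequality above can be rigorously derived at the level of weak solutions; since $n\nun$ is no longer $H^1$ in space (we lost the $\epsilon\Lap$ regularization), one must justify the manipulations either by working with a further regularization and passing to the limit, or by appealing directly to the renormalization framework of~\cite{BelgacemJabin,BelgacemJabin2}, which is precisely tailored for this situation. Given that the required regularity of the velocity $\nabla W\nun$ is available uniformly in $N$, this step is essentially a direct application of those references.
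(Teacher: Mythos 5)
Your plan is sound and shares all the essential ingredients with the paper's argument: the Belgacem--Jabin functionals $Q_h$ and $\underline{Q}_h$, constants uniform in $N$ (but $\nu$-dependent, which is fine here since $\nu$ is fixed), the strong $L^1$ convergence of $W_{\nu,N}$ from Proposition~\ref{prop:W_Strong_N} combined with the converse of Lemma~\ref{lem:CompactnessCriterion} to dispose of the $W$-term, the convergence of the initial data from Proposition~\ref{prop:Data_Strong_N}, and a final application of Lemma~\ref{lem:CompactnessCriterion}. The one place you deviate is exactly the obstacle you flag at the end: you propose to rederive the differential inequality for $Q_h+\underline{Q}_h$ at the level of the unregularised solution $n_{\nu,N}$, which lacks spatial $H^1$ regularity, and you offer two workarounds. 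The paper sidesteps this issue in a way that is simpler than either of them: it takes the post-Gronwall integral inequality for $\underline{n}_{\epsilon,\nu,N}$ already established in the proof of Proposition~\ref{prop:n_epsnu_compact} (cf.\ Eq.~\eqref{eq:Compactness_Gronwall}) --- bounding $\underline{Q}_h$ by $C\epsilon/h^2 + C|\log h|^{1/2}$ plus the $W$-term and the initial-data terms, with constants independent of $\epsilon$ and $N$ --- and passes to the limit $\epsilon\to 0$ directly in that inequality, using the strong $L^1$ convergences $\underline{n}_{\epsilon,\nu,N}\to\underline{n}_{\nu,N}$, $W_{\epsilon,\nu,N}\to W_{\nu,N}$, and $n^{\mathrm{in}}_{\epsilon,N}\to n^{\mathrm{in}}_N$ already in hand. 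This kills the $C\epsilon/h^2$ term and yields the desired bound for $\underline{n}_{\nu,N}$ without revisiting the commutator estimates at the weaker regularity. In short, no ``further regularization'' is needed; the existing one suffices, and the relevant inequality simply survives the limit $\epsilon\to0$. With that substitution, your argument coincides with the paper's.
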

\begin{proof}
    Recall from the proof of Proposition~\ref{prop:n_epsnu_compact} (cf.\ Eq.~\eqref{eq:Compactness_Gronwall}) that
    \begin{align*}
		&\int_{\R^{2d}} \calK_h(x-y)\abs*{\underline{n}\epsnun(x) - \underline{n}\epsnun(y)}\dx x\dx y\\
        &\leq C\frac{\epsilon}{h^2} + C\abs*{\log h}^{1/2} + C\int_0^T\int_{\R^{2d}}\calK_h(x-y)\abs*{W\epsnun(x) - W\epsnun(y)} \dx x\dx y\dx t\\
        &\quad + C\int_{\R^{2d}} \calK_h(x-y)\int_0^1\abs*{n_{\epsilon,N}^{\mathrm{in}}(x) - n_{\epsilon,N}^{\mathrm{in}}(y)}\dx a\dx x\dx y     \\
		& \quad + C\int_{\R^{2d}} \calK_h(x-y)\abs*{\underline{n}_{\epsilon,N}^{\mathrm{in}}(x) - \underline{n}_{\epsilon,N}^{\mathrm{in}}(y)}\dx x\dx y,
	\end{align*}
    where all the constants are independent of $\epsilon$ and $N$.
    However, we already know that $\underline{n}\epsnun$ converges to $\underline{n}\nun$ in $L^1(\Rd \times (0,T))$, as $\epsilon\to0$. Therefore, passing to the limit in $\epsilon$, we obtain
    \begin{align*}
		&\int_{\R^{2d}} \calK_h(x-y)\abs*{\underline{n}\nun(x) - \underline{n}\nun(y)}\dx x\dx y\\
        &\leq C\abs*{\log h}^{1/2} + C\int_0^T\int_{\R^{2d}}\calK_h(x-y)\abs*{W\nun(x) - W\nun(y)} \dx x\dx y\dx t\\
        &\quad + C\int_{\R^{2d}} \calK_h(x-y)\int_0^1\abs*{n_{N}^{\mathrm{in}}(x) - n_{N}^{\mathrm{in}}(y)}\dx a\dx x\dx y     \\
		& \quad + C\int_{\R^{2d}} \calK_h(x-y)\abs*{\underline{n}_{N}^{\mathrm{in}}(x) - \underline{n}_{N}^{\mathrm{in}}(y)}\dx x\dx y.
	\end{align*}
    Subsequently, using Proposition~\ref{prop:W_Strong_N} and Proposition~\ref{prop:Data_Strong_N}, we infer
	\begin{equation*}
		\lim_{h\to0}\,\limsup_{N\to\infty}\,|\log h|^{-1}\,\int_0^T\int_{\R^{2d}} \calK_h(x-y)\abs*{\underline{n}\nun(x) - \underline{n}\nun(y)}\dx x\dx y \dx t = 0.
	\end{equation*}
    Applying Lemma~\ref{lem:CompactnessCriterion}, we deduce the result. 
\end{proof}

\subsection{Passing to the limit}
We are now ready to perform the limit passage in the weak formulation of Eq.~\eqref{eq:Brinkman_average}, see Definition~\ref{def:wk_sol_brinkman}. Note that to pass to the limit in the reaction term we use Proposition~\ref{prop:Data_Strong_N}.
We can also pass to the limit in the Brinkman equation to obtain
\begin{equation*}
	-\nu\Delta W_{\nu,\infty} + W_{\nu,\infty} = \underline{n}_{\nu,\infty},
\end{equation*}
almost everywhere in $\Rd\times(0,T)$.
We have thus obtained a weak solution $(n_{\nu,\infty}, W_{\nu,\infty})$ of System~\eqref{eq:Brinkman_Phenotype} as the limit of weak solutions $(n\nun, W\nun)$ of System~\eqref{eq:Brinkman_a}, which concludes the proof of Theorem~\ref{thm:PhenotypeLimit}.

We conclude this section by observing that the entropy inequality holds also for System~\eqref{eq:Brinkman_Phenotype}.

\begin{proposition}
	The weak solution $(n_{\nu,\infty}, W_{\nu,\infty})$ of System~\eqref{eq:Brinkman_Phenotype} obtained above satisfies the following entropy inequality
	\begin{equation}
		\label{eq:entropy_inequality_phenotype}
		\begin{split}
			\mathcal{H}[\underline{n}_{\nu,\infty}](T) - \mathcal{H}[\underline{n}^{\mathrm{in}}]
			 & - \int_0^T \!\!\! \int_\Rd  \underline{n}_{\nu,\infty} \Lap W_{\nu,\infty} \dx x \dx t \\
			 & \leq\int_0^T \!\!\! \int_\Rd \log \underline{n}_{\nu,\infty}
			\int_{0}^{1} n_{\nu,\infty} G(\underline{n}_{\nu,\infty};a)\dx{a} \dx x \dx t.
		\end{split}
	\end{equation}
\end{proposition}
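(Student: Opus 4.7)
The plan is to pass to the limit $N\to\infty$ in the finite-$N$ entropy inequality from Lemma~\ref{lem:entropy-ineq}, exploiting the compactness results established throughout this section. The inequality has the form $A_N\leq B_N$, so it suffices to show $\liminf_{N\to\infty} A_N \geq A_\infty$ for the left-hand side and $\lim_{N\to\infty} B_N = B_\infty$ for the right-hand side, where $A_\infty$ and $B_\infty$ denote the corresponding expressions for the limiting pair $(n_{\nu,\infty}, W_{\nu,\infty})$.

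For the initial entropy, the convergence $\mathcal{H}[\underline{n}_{N}^{\mathrm{in}}]\to \mathcal{H}[\underline{n}^{\mathrm{in}}]$ along a subsequence is delivered directly by Proposition~\ref{prop:Data_Strong_N}. For the terminal entropy I would first observe that the argument of Proposition~\ref{prop:TimeContinuity} applies at fixed $\nu>0$, since the compactness quantity it relies on is exactly the one controlled by Proposition~\ref{prop:Strong_N}; this yields $\underline{n}_{\nu,N}\in C([0,T];L^2(\Rd))$ uniformly in $N$. Then, following Proposition~\ref{prop:FinalTimeL^1} with a smooth time cutoff concentrating near $T$, I would identify the weak $L^1$-limit of $\underline{n}_{\nu,N}(T)$ as $\underline{n}_{\nu,\infty}(T)$, and invoke convexity of $s\mapsto s(\log s -1)$ together with the uniform second-moment bound to deduce
\begin{equation*}
    \liminf_{N\to\infty} \mathcal{H}[\underline{n}_{\nu,N}](T) \geq \mathcal{H}[\underline{n}_{\nu,\infty}](T).
\end{equation*}
For the dissipation term I would use Brinkman's law to rewrite
\begin{equation*}
    -\int_0^T\!\!\!\int_\Rd \underline{n}_{\nu,N}\,\Delta W_{\nu,N}\dx x \dx t = \tfrac{1}{\nu}\int_0^T\!\!\!\int_\Rd \underline{n}_{\nu,N}\prt*{\underline{n}_{\nu,N}-W_{\nu,N}}\dx x \dx t,
\end{equation*}
and then pass to the limit directly using the strong $L^2$ convergences of $\underline{n}_{\nu,N}$ and $W_{\nu,N}$ furnished by Propositions~\ref{prop:Strong_N} and~\ref{prop:W_Strong_N}, combined with the uniform $L^\infty$ bounds.

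The main obstacle is the reaction integral, where $\log \underline{n}_{\nu,N}$ is unbounded while the densities only converge in Lebesgue spaces. The strategy mirrors the treatment used in Section~\ref{sec:JointLimit}: I would truncate to a large ball $K=K_\delta$, chosen via the uniform $L^\infty(0,T;L^1(\Rd))$-control of $\underline{n}_{\nu,N}|\log \underline{n}_{\nu,N}|$ from Lemma~\ref{lem:entropy-bounds} so that the contribution from the complement of $K$ is $O(\delta)$; inside $K$, Egorov's theorem furnishes uniform convergence $\underline{n}_{\nu,N}\to \underline{n}_{\nu,\infty}$ off an exceptional set of measure at most $\delta$; and a further splitting according to whether $\underline{n}_{\nu,\infty}$ exceeds a small threshold $\beta$ tames the logarithmic singularity, with the small-density contribution bounded by $C\mu(K)\beta|\log\beta|$. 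Combining this with the weak convergence of $n_{\nu,N}$, the strong convergence of $\underline{n}_{\nu,N}$, and the uniform convergence $G_N\to G$ from Proposition~\ref{prop:Data_Strong_N} identifies the limit of the reaction term as $\int_0^T\!\!\int_\Rd \log\underline{n}_{\nu,\infty}\int_0^1 n_{\nu,\infty}\,G(\underline{n}_{\nu,\infty};a)\dx a\dx x\dx t$. Finally, sending $\beta\to 0$ and then $\delta\to 0$ yields~\eqref{eq:entropy_inequality_phenotype}.
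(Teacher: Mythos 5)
Your proof is correct. It reaches the same conclusion by a route that differs from the paper in one structural respect: the paper passes to the limit $N\to\infty$ in the \emph{localised} entropy inequality, Eq.~\eqref{eq:Localised_Entropy_Inequality}, and only afterwards removes the cut-off $\phi$; you instead start from the already-established \emph{global} inequality of Lemma~\ref{lem:entropy-ineq} and pass to the limit directly, so no final de-localisation is needed. Your treatment of the individual terms is sound: the initial entropy is handled by Proposition~\ref{prop:Data_Strong_N}(3); for the terminal entropy you correctly observe that either the Arzel\`a–Ascoli argument of Proposition~\ref{prop:TimeContinuity} can be re-run in $N$ at fixed $\nu$ (since its input, the compactness quantity $\underline{Q}_h$, is exactly what Proposition~\ref{prop:Strong_N} controls) to give strong $L^2$ convergence of $\underline{n}_{\nu,N}(T)$, or one can settle for weak $L^1$ identification as in Proposition~\ref{prop:FinalTimeL^1} and use convexity and lower semicontinuity of $\mathcal H$, which suffices since the term sits on the $\leq$ side; your reaction-term analysis mirrors the Egorov argument of Section~\ref{sec:JointLimit} with $K_\delta$ chosen via Lemma~\ref{lem:entropy-bounds}, and all the required inputs (strong convergence of $\underline{n}_{\nu,N}$, weak convergence of $n_{\nu,N}$, uniform convergence of $G_N$) are available at fixed $\nu$. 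For the dissipation term, your Brinkman rewrite $-\Delta W_{\nu,N} = \nu^{-1}(\underline{n}_{\nu,N}-W_{\nu,N})$ is legitimate precisely because $\nu>0$ is fixed; one could also argue more directly by pairing the strong $L^2$ convergence of $\underline{n}_{\nu,N}$ against the uniform $L^\infty(0,T;L^r)$-bound on $\Delta W_{\nu,N}$ from Lemma~\ref{lem:Existence}(iv). The paper's localise-then-globalise route keeps all integrals over $\supp\phi$ during the limit passage, which makes the compactness arguments local at the cost of the extra $\nabla\phi$ terms and a monotone-convergence step at the end; your route trades that for having to control global integrals, which the established second-moment and entropy bounds already deliver. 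Both are fully rigorous; yours is marginally more direct given that the global $N$-level inequality is already proved.
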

\begin{proof}
	This follows from taking the limit $N\to\infty$ in the inequality for the localised entropies, Eq.~\eqref{eq:Localised_Entropy_Inequality}. For the entropy terms we use the strong convergence property derived in Proposition~\ref{prop:TimeContinuity} and Proposition~\ref{prop:Data_Strong_N}, respectively. The critical term is the reaction term, since we only have weak convergence of $n\nun$. However, using the same approach as in Section~\ref{sec:JointLimit} (Egorov's theorem and partition of the domain $\supp\phi$), we can show that
    \begin{align*}
        \int_0^T \!\!\! \int_\Rd \phi \log{\underline{n}\nun}
			\int_0^1 & n\nun G_{N}(\underline n\nun;a) \dx a \dx x \dx t \\
            &\to\int_0^T \!\!\! \int_\Rd \phi \log \underline{n}_{\nu,\infty}
			\int_{0}^{1} n_{\nu,\infty} G(\underline{n}_{\nu,\infty};a)\dx{a} \dx x \dx t.
    \end{align*}
    Considering a sequence of cut-off converging to unity, we remove the localisation and deduce Eq. ~\eqref{eq:entropy_inequality_phenotype}.
\end{proof}

With the above entropy inequality and the uniform bounds from Lemma~\ref{lem:Existence}, we can easily follow the strategy explained in Section~\ref{sec:JointLimit} to deduce the following result on the inviscid limit at the continuous phenotype level.

\begin{corollary}
    Let $(n_{\nu,\infty}, W_{\nu,\infty})$ be the weak solution of System~\eqref{eq:Brinkman_Phenotype} obtained above. Then, there exists a function $n_{0, \infty}: \Rd \times [0,T] \times [0,1] \to [0, \infty)$ with 
    \begin{equation*}
        n_{0, \infty}(\cdot, \cdot\,; a)\in L^\infty(0,T; L^1 \cap L^\infty(\Rd)),\quad \text{for almost every $a \in [0,1]$},
    \end{equation*}
    and
    \begin{equation*}
        \underline{n}_{0, \infty} \in L^2(0,T;H^1(\Rd)),
    \end{equation*}
    such that, up to a subsequence, 
    \begin{align*}
        n_{\nu, \infty} &\stackrel {\star}{\rightharpoonup} n_{0,\infty} \quad\text{in } L^\infty(0,T;L^p(\Rd)), 1\leq p\leq \infty,\\
        \underline{n}_{\nu, \infty} &\to \underline{n}_{0,\infty} \quad\text{in } L^2(0,T;L^2(\Rd)),\\
        W_{\nu, \infty} &\to \underline{n}_{0,\infty} \quad\text{in } L^2(0,T;H^1(\Rd)),
    \end{align*}
    The limit function $n_{0,\infty}$ is a weak solution to System~\eqref{eq:Darcy_Phenotype} in the sense of Definition \ref{def:wk_sol_darcy}.
\end{corollary}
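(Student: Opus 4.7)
The plan is to carry out the inviscid limit $\nu \to 0$ at the continuous phenotype level by mimicking exactly the scheme of Section~\ref{sec:JointLimit}, with the crucial observation that all the bounds from Lemma~\ref{lem:Existence} and from the entropy inequality~\eqref{eq:entropy_inequality_phenotype} for System~\eqref{eq:Brinkman_Phenotype} are uniform in $\nu$ (as they were originally uniform in both $\nu$ and $N$, and both the $L^p$-norms and the entropy are weakly lower semicontinuous). Hence, up to a subsequence, I immediately extract weak-$*$ limits $n_{\nu,\infty} \stackrel{\star}{\rightharpoonup} n_{0,\infty}$ in $L^\infty(0,T; L^p(\Rd))$ for every $1 \leq p \leq \infty$ and a.e.\ $a\in[0,1]$, and analogous weak limits for $\underline{n}_{\nu,\infty}$ and $W_{\nu,\infty}$, with $\underline{n}_{0,\infty}=\int_0^1 n_{0,\infty}\dx a$.

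The next step is to upgrade these weak limits to strong convergence for $\underline{n}_{\nu,\infty}$ and $W_{\nu,\infty}$, exactly as in Propositions~\ref{lem:Wton_invicid} and~\ref{prop:NuToZeroDensity}. The key input is the entropy bound, which via the sign identity $-\int \underline{n}_{\nu,\infty} \Delta W_{\nu,\infty} \geq \int |\nabla W_{\nu,\infty}|^2$ (obtained from the Brinkman equation and integration by parts) yields a uniform $L^2(L^2)$ bound on $\nabla W_{\nu,\infty}$. Combined with $\partial_t W_{\nu,\infty} = K_\nu \star \partial_t \underline{n}_{\nu,\infty}$ bounded in $L^2(H^{-1})$ and the second-moment tightness from Proposition~\ref{lem:second_moment}, Aubin-Lions yields strong convergence of $W_{\nu,\infty}$ in $L^2(L^2)$ with limit identified as $\underline{n}_{0,\infty}$ from the weak form of the Brinkman equation. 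The Brinkman identity then provides
\begin{equation*}
    \|\underline{n}_{\nu,\infty} - W_{\nu,\infty}\|_{L^2(L^2)}^2 \leq -\nu \int_0^T \!\!\int_\Rd \underline{n}_{\nu,\infty}\Delta W_{\nu,\infty}\dx x \dx t \leq C\nu,
\end{equation*}
which forces strong $L^2(L^2)$ convergence of $\underline{n}_{\nu,\infty}$ to the same limit $\underline{n}_{0,\infty}$, and in particular $\underline{n}_{0,\infty} \in L^2(0,T; H^1(\Rd))$.

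To pass to the limit in the weak formulation of~\eqref{eq:Brinkman_Phenotype}, I need strong convergence of the velocity $\nabla W_{\nu,\infty}$ in $L^2(L^2)$. Following the strategy of Section~\ref{sec:JointLimit}, I compare the entropy inequality~\eqref{eq:entropy_inequality_phenotype} satisfied by $\underline{n}_{\nu,\infty}$ against the formal entropy identity~\eqref{eq:EntropyDarcyPhenotype} satisfied by the Darcy limit $\underline{n}_{0,\infty}$, to obtain
\begin{equation*}
    \limsup_{\nu\to 0}\int_0^T\!\!\int_\Rd |\nabla W_{\nu,\infty}|^2 \dx x \dx t \leq \int_0^T\!\!\int_\Rd |\nabla \underline{n}_{0,\infty}|^2 \dx x \dx t,
\end{equation*}
which, together with weak convergence, upgrades to strong convergence. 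With this in hand, the nonlinear flux $n_{\nu,\infty} \nabla W_{\nu,\infty}$ converges weakly to $n_{0,\infty}\nabla \underline{n}_{0,\infty}$, and one reads off that $n_{0,\infty}$ is a weak solution of System~\eqref{eq:Darcy_Phenotype}.

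The main obstacle, as it was for the joint limit, is passing to the limit in the reaction-type term $\int_0^T \!\int_\Rd \log \underline{n}_{\nu,\infty} \int_0^1 n_{\nu,\infty} G(\underline{n}_{\nu,\infty};a)\dx a\dx x\dx t$ in the entropy comparison, since the logarithm is singular and $n_{\nu,\infty}$ converges only weakly. I would import the Egorov-partition argument from Section~\ref{sec:JointLimit} verbatim: fix a compact support via the uniform $L^1\log L^1$ control from Proposition~\ref{lem:entropy-bounds} and the uniform second-moment bound, then apply Egorov's theorem to the strongly convergent sequence $\underline{n}_{\nu,\infty}$ and partition the remaining set into $\{\underline{n}_{0,\infty} \geq \beta/2\}$ (where uniform convergence controls $\log \underline{n}_{\nu,\infty}$), $\{\underline{n}_{0,\infty} < \beta/2\}$ (where the elementary bound $|s\log s|\lesssim \beta|\log \beta|$ handles the singularity), and a set of small measure. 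Taking $\beta\to 0$ last then yields the convergence of the reaction term and closes the argument.
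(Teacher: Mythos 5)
Your proposal is correct and reproduces exactly the argument the paper intends: the paper states this corollary with the one-line justification that the entropy inequality~\eqref{eq:entropy_inequality_phenotype} and the $\nu$-uniform bounds inherited from Lemma~\ref{lem:Existence} allow one to repeat the Section~\ref{sec:JointLimit} strategy, and your write-up fills in those steps in the intended way (uniform bounds, entropy-driven $L^2(L^2)$ bound on $\nabla W_{\nu,\infty}$, Aubin--Lions, the Brinkman identity $\|\underline n_{\nu,\infty}-W_{\nu,\infty}\|^2\lesssim\nu$, the entropy-comparison upgrade of weak to strong convergence of the velocity, and the Egorov partition for the reaction term).
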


\subsection{The phenotype limit at the Darcy level}
We conclude this section with a short discussion of the limit $N\to\infty$ for the inviscid system~\eqref{eq:Darcy_a}. 
Let $(n\oun)_N$ be a sequence of nonnegative weak solutions of System~\eqref{eq:Darcy_a}. The existence of such solutions can be obtained by taking the limit $\nu\to0$ in the family of solutions $(n\nun)_\nu$ to the Brinkman system~\eqref{eq:Brinkman_a}. In particular, these solutions  are uniformly bounded in $L^\infty(0,T;L^1\cap L^\infty(\Rd))$. Hence, up to a subsequence, we have
\begin{equation*}
    n\oun(\cdot,\cdot;a)\rightharpoonup n\oinfty(\cdot,\cdot;a)\quad \text{in } L^2(0,T;L^2(\Rd)),
\end{equation*}
for each $a\in[0,1]$, for some function $n\oinfty \in L^\infty(0,T;L^1\cap L^\infty(\Rd))$.
We then have
\begin{equation*}
    \underline{n}\oun := \int_0^1 n\oun\dx a \rightharpoonup \int_0^1 n\oinfty\dx a =: \underline{n}\oinfty,
\end{equation*}
weakly in $L^2(0,T;L^2(\Rd))$.
The rescaled total density $\underline{n}\oun$ satisfies the following equation:
\begin{equation*}
	\frac{\partial\underline{n}\oun}{\partial t}  - \div(\underline{n}\oun \grad \underline{n}\oun) = \ds \int_{0}^{1}  n\oun G(\underline{n}\oun;a)\dx{a}.
\end{equation*}
Testing this equation by $\log{\underline{n}\oun}$ and bounding the entropy terms, we obtain

\begin{equation}\label{eq:EntropyDarcyPhenotype2}
	\begin{aligned}
	\int_0^T \!\!\! \int_\Rd  \abs{\grad \underline{n}\oun}^{2}   \dx x \dx t
		 & \leq \abs{\mathcal{H}[\underline{n}\oinfty](T)} + \abs{\mathcal{H}[\underline{n}^{\mathrm{in}}]}\\
        &\quad+ \int_0^T \!\!\! \int_\Rd \abs{\log \underline{n}\oun}
		\int_{0}^{1} n\oun G_{N}(\underline{n}\oun;a)\dx{a} \dx x \dx t\\
        &\leq C,
	\end{aligned}
\end{equation}
where the constant is independent of $N$.
This implies that $\nabla \underline{n}\oun$ is uniformly bounded in $L^2(0,T;L^2(\Rd))$. In conjunction with the uniform bound $\partial_t\underline{n}\oun \in L^2(0,T;H^{-1}(\Rd))$, we deduce that there exists a subsequence such that $\underline{n}\oun$ converges to $\underline{n}\oinfty$ in the $L^2$-norm. Along the same subsequence we also have $\nabla \underline{n}\oun\rightharpoonup\nabla\underline{n}\oinfty$ in $L^2(0,T;L^2(\Rd))$.
These convergences are sufficient to pass to the limit in the weak formulation of System~\eqref{eq:Darcy_a} to deduce that $n\oinfty$ satisfies the weak formulation of System~\eqref{eq:Darcy_Phenotype}.

\section{Concluding Remarks}
\label{sec:conclusion}
In this work, we proposed tissue growth models featuring  $N$  subpopulations governed by viscoelastic interactions through Brinkman's law. Our investigation focused on two distinct limit processes. First, in the joint limit as $\nu \to 0$ and $N \to \infty$, we recovered the inviscid tissue growth model with a continuous phenotype variable, as introduced in \cite{Dav2023}. The second limit considered the number of phenotype traits approaching infinity ($N \to \infty$) for both $\nu > 0$ and $\nu = 0$. For $\nu > 0$, we derived a viscoelastic tissue growth model with a continuous phenotype variable. In the case of $\nu = 0$, we again obtained the inviscid tissue growth model with continuous traits, as studied in \cite{Dav2023}. Additionally, the inviscid limit $\nu \to 0$ for a fixed finite number of phenotypes is a straightforward generalisation of \cite{DDMS2024}. Thus, our work provides a comprehensive framework that elucidates the relationships between these four modelling paradigms, as depicted in the diagram presented in the introduction.\\

To the best of our knowledge, our results constitute the first rigorous `phenotype-to-infinity' limits in this context, opening up exciting future research avenues. These include the incorporation of convective effects and the exploration of broader classes of constitutive pressure laws, for which we believe our findings will also hold true.

\bigskip
\textsc{Acknowledgement}\\
T.D. acknowledges the support of the Polish National Agency for
Academic Exchange (NAWA), agreement no. BPN/BDE/2023/1/00011/U/00001. M.M. and M.S. would like to acknowledge the support of the German Academic Exchange Service (DAAD) Project ID 57699543.

\bibliographystyle{abbrv}
\bibliography{lit}

\end{document}